\newcommand{\pl}[1]{\foreignlanguage{polish}{#1}}
\theoremstyle{plain}
\newtheorem{theorem}{Theorem}[section]
\newtheorem{proposition}{Proposition}[section]
\newtheorem{lemma}{Lemma}[section]
\theoremstyle{definition}
\theoremstyle{remark}
\newtheorem{remark}{Remark}[section]
\numberwithin{equation}{section}
\theoremstyle{plain}
\newcounter{thm}
\newtheorem{main_theorem}[thm]{Theorem}
\newcommand{\RR}{\mathbb{R}}
\newcommand{\BB}{\mathbb{B}}
\newcommand{\ZZ}{\mathbb{Z}}
\newcommand{\TT}{\mathbb{T}}
\newcommand{\CC}{\mathbb{C}}
\newcommand{\NN}{\mathbb{N}}
\newcommand{\QQ}{\mathbb{Q}}
\newcommand{\EE}{\mathbb{E}}
\newcommand{\DD}{\mathbb{D}}
\newcommand{\boldB}{\mathbf{B}}
\newcommand{\boldA}{\mathbf{A}}
\newcommand{\calC}{\mathcal{C}}
\newcommand{\calP}{\mathcal{P}}
\newcommand{\calQ}{\mathcal{Q}}
\newcommand{\calF}{\mathcal{F}}
\newcommand{\calM}{\mathcal{M}}
\newcommand{\calI}{\mathcal{I}}
\newcommand{\calT}{\mathcal{T}}
\newcommand{\seq}[2]{{#1}: {#2}}
\newcommand{\ind}[1]{{\mathds{1}_{{#1}}}}
\newcommand{\supp}{\operatorname{supp}}
\newcommand{\diam}{\operatorname{diam}}
\newcommand{\var}[1]{V_r{#1}}
\newcommand{\bvar}[1]{\mathcal{V}_r{#1}}
\renewcommand{\atop}[2]{\substack{{#1}\\{#2}}}
\newcommand{\norm}[1]{{\left\lvert #1 \right\rvert}}
\newcommand{\sprod}[2] {{#1 \cdot #2}}
\newcommand{\abs}[1]{{\lvert {#1} \rvert}}
\newcommand{\sabs}[1]{{\left\lvert {#1} \right\rvert}}
\newcommand{\vnorm}[1]{{\left\lVert {#1} \right\rVert}}
\newcommand{\vrho}{\varrho}
\title[Discrete operators of Radon type]
{$\ell^p\big(\ZZ^d\big)$-estimates for discrete operators of Radon
  type: Variational estimates  }
\author{Mariusz Mirek}
\address{Mariusz Mirek \\
	Universit\"{a}t Bonn \\
	Mathematical Institute\\
	Endenicher Allee 60\\
	D-53115 Bonn \\
	Germany}
 \email{mirek@math.uni-bonn.de}
\author{Elias M. Stein}
\address{
	Elias M. Stein\\
	Department of Mathematics\\
	Princeton University\\
	Princeton\\
	NJ 08544-100 USA}
\email{stein@math.princeton.edu}
\author{Bartosz Trojan}
\address{
	Bartosz Trojan\\
	Instytut Matematyczny\\
	Uniwersytet \pl{Wroc{\lll}awski}\\
	Plac Grun\-waldzki 2/4\\
	50-384 \pl{Wroc{\lll}aw}\\
	Poland}
\email{trojan@math.uni.wroc.pl}
\begin{document}
\selectlanguage{english}

\begin{abstract}
  We prove $\ell^p\big(\mathbb Z^d\big)$ bounds for $p\in(1, \infty)$, of $r$-variations
  $r\in(2, \infty)$, for discrete averaging operators and truncated singular
  integrals of Radon type. We shall present a new powerful method which
  allows us to deal with these operators in a unified way and obtain
  the range of parameters of $p$ and $r$ which coincide with the
  ranges of their continuous counterparts.
\end{abstract}

\maketitle

\section{Introduction}\label{sec:1}
In this paper we will be concerned with estimates of $r$-variations for discrete operators of averaging and singular Radon type, and their application to ergodic theory. The $r$-variational estimates for the continuous versions of these operators will be treated as well. 

Let
$$
\calP=(\calP_1,\ldots, \calP_{d_0}): \ZZ^{k} \rightarrow \ZZ^{d_0}
$$
be a polynomial mapping where for each $j \in \{1, \ldots, d_0\}$ the
function $\calP_j:\ZZ^{k} \rightarrow \ZZ$ is an integer valued
polynomial of $k$ variables with $\calP_j(0) = 0$.  
Define, for a finitely
supported function $f: \ZZ^{d_0} \rightarrow \CC$, the Radon averages
\begin{align}
	\label{eq:2}
	 M_N^{\calP} f(x)
	=|\BB_N|^{-1} \sum_{y\in\BB_N}
	f\big(x-\calP(y)\big)
\end{align}
where  $\BB_t=\{x\in\ZZ^k: |x|\le t\}$ and $t>0$.  We will be also interested
in discrete truncated singular integrals.

Assume that $K \in
\calC^1\big(\RR^k \setminus \{0\}\big)$ is a Calder\'{o}n--Zygmund
kernel satisfying the differential inequality
\[
	\norm{y}^k \abs{K(y)} + \norm{y}^{k+1} \norm{\nabla K(y)} \leq 1
\]
for all $y \in \RR^k$ with $\norm{y} \geq 1$. We also impose the following cancellation
condition 
\begin{align}
\label{eq:24}
	\int_{B_{\lambda_2} \setminus B_{\lambda_1}} K(y) {\: \rm d} y = 0
\end{align} 
for every $0<\lambda_1\le \lambda_2$ where $B_{\lambda}$ is the
Euclidean ball in $\RR^k$ centered at the origin with
radius $\lambda>0$. Define, for a finitely supported
function $f: \ZZ^{d_0} \rightarrow \CC$, the truncated singular Radon transforms 
\begin{align}
  \label{eq:190}
T_N^{\calP} f(x)
=
\sum_{y\in\BB_N\setminus\{0\}} f\big(x - \calP(y)\big) K(y).
\end{align}

The basic aim of this paper is to strengthen the
$\ell^p\big(\ZZ^{d_0}\big)$ boundedness, $p\in(1, \infty)$, of maximal functions
corresponding to operators \eqref{eq:2} and \eqref{eq:190}, which have been
recently proven in \cite{mst1}, and provide sharp $r$-variational
bounds in the full range of exponents.

Recall that for any
$r\in[1, \infty)$ the $r$-variational seminorm $V_r$ of a sequence $\big(a_n(x):
n\in \NN\big)$ of complex-valued functions is defined by
$$
V_r\big(a_n(x): n\in \NN\big)
=\sup_{\atop{k_0<\ldots <k_J}{k_j\in \NN}}
\Big(\sum_{j=1}^J|a_{k_j}(x)-a_{k_{j-1}}(x)|^r\Big)^{1/r}.
$$ 
The main results of this article are the following theorems.
\begin{main_theorem}
  \label{thm:1}
	For every $p\in(1, \infty)$  and $r\in(2, \infty)$ there is $C_{p, r} > 0$ such that for all
	$f \in \ell^p\big(\ZZ^{d_0}\big)$
        \begin{align}
          \label{eq:3}
          	\big\lVert
	V_r\big( M_N^\calP f: N\in\NN\big)
	\big\rVert_{\ell^p}\le
	C_{p, r}\|f\|_{\ell^p}.
        \end{align}
	Moreover, the constant $C_{p, r}\le C_p\frac{r}{r-2}$ for some
        $C_p>0$ which is independent of the coefficients of the polynomial
        mapping $\calP$.
\end{main_theorem}
We also obtain the corresponing theorem for the truncated singular Radon transforms.
\begin{main_theorem}
\label{thm:4}
	For every $p\in(1, \infty)$  and $r\in(2, \infty)$ there is $C_{p, r} > 0$ such that for all
	$f \in \ell^p\big(\ZZ^{d_0}\big)$
        \begin{align}
       \label{eq:59}
          	\big\lVert
	V_r\big(  T_N^\calP f: N\in\NN\big)
	\big\rVert_{\ell^p}\le
	C_{p, r}\|f\|_{\ell^p}.
        \end{align}
	Moreover, the constant $C_{p, r}\le C_p\frac{r}{r-2}$ for some
        $C_p>0$ which is independent of the coefficients of the polynomial
        mapping $\calP$.
\end{main_theorem}

Theorem \ref{thm:1} and Theorem \ref{thm:4} have ergodic theoretical
interpretations. More precisely, let $(X, \mathcal{B}, \mu)$ be a
$\sigma$-finite measure space with a family of invertible commuting
and measure preserving transformations $S_1, S_2,\ldots,S_{d_0}$. Let
\begin{align}
\label{eq:191}	
\mathcal A^\calP_N f(x)
	= N^{-k}\sum_{y \in \BB_N} 
	f\big(S_1^{\calP_1(y)} S_2^{\calP_2(y)} \cdot \ldots \cdot S_{d_0}^{\calP_{d_0}(y)} x\big)
\end{align}
and
\begin{align}
\label{eq:192}
	\mathcal H^\calP_N f(x)
	= \sum_{y \in \BB_N \setminus \{0\}} 
	f\big(S_1^{\calP_1(y)} S_2^{\calP_2(y)} \cdot \ldots \cdot S_{d_0}^{\calP_{d_0}(y)} x\big)
	K(y).
\end{align}
Specifying a suitable measure space $(X, \mathcal{B}, \mu)$ and a
family of measure preserving transformations
we immediately see that $\mathcal A^\calP_N$ and $\mathcal H^\calP_N$
coincide with $M^\calP_N$ and $T^\calP_N$ respectively. 
Indeed, it suffices to take $X=\ZZ^{d_0}$, $\mathcal B=\mathbf P\big(\ZZ^{d_0}\big)$ 
$\sigma$-algebra of all subsets of $\ZZ^{d_0}$, $\mu=|\; \cdot\; |$ 
to be the counting measure on $\ZZ^{d_0}$ and $S_j^y:\ZZ^{d_0} \rightarrow \ZZ^{d_0}$ the shift operator
acting of $j$-th coordinate, i.e. $S_j^y(x_1,\ldots,
x_{d_0})=(x_1,\ldots,x_j-y,\ldots, x_{d_0})$ for all $j=1, 2,\ldots,
d_0$ and $y\in\ZZ$.
\begin{main_theorem}
\label{thm:24}
Let $\mathcal S_N^{\calP}$ be the operator given either by
\eqref{eq:191} or by \eqref{eq:192} and assume that $p\in(1, \infty)$ and
$r\in(2, \infty)$. Then there is $C_{p, r} > 0$ such that for all
	$f \in L^p(X, \mu)$
        \begin{align}
\label{eq:193}
          	\big\lVert
	V_r\big(  \mathcal S_N^\calP f: N\in\NN\big)
	\big\rVert_{L^p}\le
	C_{p, r}\|f\|_{L^p}.
        \end{align}
        In particular, \eqref{eq:193} implies that for every $f \in
        L^p(X, \mu)$ there exists $f^*\in L^p(X, \mu)$ such that
	\begin{align*}
		\lim_{N\to\infty} \mathcal S_{N}^{\calP}f(x)=f^*(x)
	\end{align*}
	$\mu$-almost everywhere on $X$.
\end{main_theorem}
The estimate \eqref{eq:193} from Theorem \ref{thm:24} can be deduced
from inequality \eqref{eq:3} or \eqref{eq:59} by appealing to the
Calder\'on transference principle. Furthermore,  Theorem \ref{thm:24}
with $\mathcal S_N^{\calP}=\mathcal H_N^{\calP}$ can be thought as an
extension of  Cotlar's ergodic theorem (see \cite{cot}), which states that
for every $\sigma$-finite measure space $(X, \mathcal{B},
\mu)$ with an invertible and a measure preserving transformation $S$ the limit
\[
\lim_{N\to\infty}\sum_{0<|n|\le N} 
	\frac{f\big(S^n x\big)}{n}
\]
exists $\mu$-almost everywhere on $X$ for every $f\in L^p(X, \mu)$
with $p\in(1, \infty)$.

The classical strategy for handling pointwise convergence
problems requires $L^p(X, \mu)$ boundedness for the corresponding maximal function,
reducing the matters to proving pointwise convergence for a dense
class of $L^p(X, \mu)$ functions.  However, establishing pointwise
convergence on a dense class can be a quite challenging problem.  This is the case for
Bourgain's averaging operator along the squares.  Fortunately in \cite{bou}, he was
able to circumvent this issue  for the operators $\mathcal
A_N^{\calP}$, in the one dimensional case $k=d_0=1$, by controlling their
oscillation seminorm.  Given a lacunary sequence $(n_j: j\in\NN)$, the
oscillation seminorm for a sequence $\big(a_n: n\in\NN\big)$ of
complex numbers is defined by
\[
         O_J\big(a_n: n\in\NN\big)
        = 
        \Big(
        \sum_{j=1}^J\sup_{n_j < n \leq n_{j+1}}
        \big|a_n-a_{n_j}\big|^2\Big)^{1/2}.
\]
Bourgain deduced pointwise convergence on $L^2(X, \mu)$ for the
operators $\mathcal A_N^{\calP}$ by proving, roughly, that there are
constants $C>0$ and $c<1/2$ such that for all $J\in\NN$
\[
\big\|O_J\big(\mathcal A_{N}^{\calP}f: N\in\NN\big)\big\|_{L^2} 
\le CJ^{c} \|f\|_{L^2}.
\]

Variational estimates have been the subject of many papers, see
\cite{jkrw, jsw, k, mt3, zk} and the references therein.  Our motivation to
study $r$-variational seminorms is threefold. Firstly,
for any sequence of functions $\big(a_n(x): n\in \NN\big)$, if for some $1 \leq r < \infty$ 
\[
V_r\big(a_n(x): n\in \NN\big)<\infty
\]
then the limit $\lim_{n\to\infty}a_n(x)$ exists. Secondly, $V_r$'s control the supremum norm.
Indeed, for any $n_0\in \NN$ we have the pointwise estimate
\[
\sup_{n\in \NN}|a_n(x)|\le |a_{n_0}(x)| + 2V_r\big(a_n(x): n\in \NN\big).
\]
Furthermore, for any $2\le r< \infty $ by H\"older's inequality we have
\[
 O_{J}\big(a_n(x): n \in \NN\big)\le J^{1/2-1/r} V_r\big(a_n(x): n \in \NN\big).
\]

The variational estimates for Bourgain's averaging
operator \eqref{eq:2} with $k=d_0=1$, have recently been extensively
studied, while only partial results were obtained. Namely,
Krause \cite{k} showed inequality \eqref{eq:3} for $p\in(1, \infty)$ and
$r>\max\{p, p'\}$. Zorin-Kranich \cite{zk} showed \eqref{eq:3} with
$r\in(2, \infty)$ and $p\in(1, \infty)$ in some vicinity of $2$, i.e.  $|1/p-1/2|<1/(2(d'+1))$,
where $d'$ is the degree of the polynomial. Their proofs were based on
variational estimates of the famous Bourgain's logarithmic lemma provided
by Nazarov, Oberlin and Thiele in \cite{not}, see also \cite{K2} for
some improvements. That was the main building block in their
arguments. Although the logarithmic lemma gives very nice $\ell^2(\ZZ)$
results, it is generally very inefficient for $\ell^p(\ZZ)$. The reason, loosely speaking, is that it produces for
$p\not=2$ a polynomial growth in norm unlike the acceptable
logarithmic growth which one has for $p=2$. Therefore in this paper we
introduce a different flexible approach based on a direct analysis of the
multipliers associated with operators \eqref{eq:2} and
\eqref{eq:190}, and instead of Bourgain's logarithmic lemma we will
apply a simple numerical inequality, see Lemma \ref{lem:6}, which
turns out to be a more appropriate tool  in  these  problems with
arithmetic flavor. This lemma is a variant of the crucial Lemma 2.2 that we used in \cite{mst1}.

The proof of Theorem \ref{thm:1} and Theorem \ref{thm:4}, in view of
inequality \eqref{eq:32}, will be based on separate estimates for long
and short variational seminorms of the operators
$M_{N}^{\calP}$ and $ T_{N}^{\calP}$ respectively. We now describe the
key points of our method in the case of averaging operator
\eqref{eq:2}. Assume, for simplicity, that $k=1$ and
$\calP(x)=(x^d,\ldots, x)$ is a moment curve for some $d_0=d\ge2$.
Let $m_{N}$ be the multiplier associated with $M_{N}^{\calP}$,
i.e. $\calF^{-1}\big(m_{N}\hat{f}\big)=M_{N}^{\calP}f$.  

The estimates of long variations will be very much in spirit of the
estimates of maximal functions associated with $M_{N}^{\calP}$ as we gave in
\cite{mst1}. For this purpose as in \cite{mst1} we introduce an appropriate partition of
unity which permits us to identify asymptotic or highly oscillatory
behaviour of $m_{2^n}$, corresponding respectively to the ``major" and ``minor" arcs. This distinction is based on the
ideas of the circle method of Hardy and Littlewood.  More precisely,
let $\eta$ be a smooth cut-off function with a small support, fix $l
\in \NN$ and  for each $n\in\NN$ define projections
\[
	\Xi_{n}(\xi)
	=\sum_{a/q \in\mathscr{U}_{n^l}}
	 \eta\big(\mathcal E_n^{-1}(\xi - a/q)\big)
\]
where  $\mathcal E_n$
is a diagonal $d\times d$  matrix with positive entries
$\big(\varepsilon_{j}: 1\le j\le d\big)$ such that
$\varepsilon_{j}\le e^{-n^{1/5}}$   and
\[
\mathscr{U}_{n^l}=\big\{a/q \in \TT^d\cap\QQ^d : a=(a_1, \ldots, a_d) \in
\NN^d_q \text{ and } \mathrm{gcd}(a_1, \ldots, a_d, q)=1 \text{ and }
q\in  P_{n^l}\big\}
\]
for some family $ P_{n^l}$ such that $\NN_{n^l}\subseteq
P_{n^l}\subseteq \NN_{e^{n^{1/10}}}$, we refer to the last subsection
of Section \ref{sec:3}
for more detailed definitions. The projections $\Xi_n$ will be
critical in the further analysis, since 
\[
V_r^L\big(M_{N}^{\calP}f: N\in\NN\big)\le
V_r\big(\calF^{-1}\big(m_{2^n}\Xi_n\hat{f}\big): n\in\NN_0\big)+
V_r\big(\calF^{-1}\big(m_{2^n}(1-\Xi_n)\hat{f}\big): n\in\NN_0\big)
\]
where $m_{2^n}(\xi)\Xi_n(\xi)$ corresponds to the asymptotic
behaviour of $m_{2^n}(\xi)$, whereas $m_{2^n}(\xi)(1-\Xi_n(\xi))$
localizes the highly oscillatory part. For  the last piece we can
prove that
\[
\big\|V_r\big(\calF^{-1}\big(m_{2^n}(1-\Xi_{n})\hat{f}\big): n\in\NN_0\big)\big\|_{\ell^p}\le C_{
  p, r}\|f\|_{\ell^p}.
\]
These bounds can be deduced from a variant of Weyl's inequality with
logarithmic decay, see  Theorem \ref{thm:3} or
\cite{mst1}, and the following inequality for $p\in(1, \infty)$, that goes back to ideas of Ionescu and Wainger,
\begin{align}
  \label{eq:202}
\big\|\calF^{-1}\big(\Xi_{n}\hat{f}\big)\big\|_{\ell^p}\le C_{l,
  p} \log(n+2) \|f\|_{\ell^p}  
\end{align}
 see Theorem \ref{th:3} or \cite{mst1} and \cite{iw} for more
 detailed expositions. For the proof of the estimate
\[
\big\|V_r\big(\calF^{-1}\big(m_{2^n}\Xi_{n}\hat{f}\big): n\in\NN_0\big)\big\|_{\ell^p}\le C_{
  p, r}\|f\|_{\ell^p},
\]
we  show that 
\[
m_{2^n}(\xi)\Xi_n(\xi)\simeq\sum_{s\ge 0}m_{2^n}^s(\xi)
\]
where
\begin{align}
  \label{eq:203}
m_{2^n}^s(\xi)=\sum_{a/q \in\mathscr{U}_{(s+1)^l}\setminus\mathscr{U}_{s^l}}G(a/q)\Phi_{2^n}(\xi-a/q)
	 \eta\big(\mathcal E_s^{-1}(\xi - a/q)\big),
\end{align}
with $G(a/q)$ being the Gaussian sum and $\Phi_{2^n}$ being the continuous version of $m_{2^n}$, see at the
beginning of Section \ref{sec:4} for relevant definitions.  Then the matters are reduced to proving 
that for each $s\ge0$ we have
\begin{align}
  \label{eq:204}
\big\|V_r\big(\calF^{-1}\big(m_{2^n}^s\Xi_{n}\hat{f}\big): n\in\NN_0\big)\big\|_{\ell^p}\le  C_{
  p}(s+1)^{-2}\|f\|_{\ell^p}.
\end{align}
Firstly, we prove \eqref{eq:204} for $p=2$ with bound $C_r
(s+1)^{-\delta l+1}\|f\|_{\ell^2}$, where $\delta>0$ is an exponent from
the bound for the Gaussian sums $|G(a/q)|\le Cq^{-\delta}$, and
$l\in\NN$ an arbitrary integer.  Secondly, for general $p\not=2$ we
obtain much worse bound $C_{l, p, r}s \log(s+2) \|f\|_{\ell^p}$. Now
interpolating the last bound with much better for $p=2$ we get the
claim from \eqref{eq:204}, since $l\in\NN$ can be arbitrarily
large. To achieve both bounds we partition the $V_r$ in \eqref{eq:204}
into two pieces $n\le 2^{\kappa_s}$ and $n>2^{\kappa_s}$ for some
integer $1<\kappa_s\le Cs$. The case for large scales $n>2^{\kappa_s}$
follows by invoking  the transference principle which allows us to
control discrete $\| \cdot \|_{\ell^p}$ norm of $r$-variations
associated with multipliers from \eqref{eq:203} by the continuous $\|
\cdot \|_{L^p}$ norm of $r$-variations closely related with the
multiplier $\Phi_{2^n}$, which is \emph{a priori} bounded on
$\RR^d$. This is the place, and only place, where we are restricted to
$r\in(2, \infty)$, and then obtain the growth of the constant $C_{p,
  r}\le C_p\frac{r}{r-2}$ in Theorem \ref{thm:1}.  The reason lies in
application L\'epingle's inequality, (see Theorem \ref{thm:22} and
Theorem \ref{thm:23} in the Appendix) to bound $L^p\big(\RR^d\big)$
norm.  The case of small scales $n\le 2^{\kappa_s}$ has different
nature and some new ideas came up. An invaluable tool which surmounted
complications occurring in \cite{k} and \cite{zk} is a simple
numerical inequality from Lemma \ref{lem:6} yielding
\begin{align}
  \label{eq:205}
  		V_r\big(\calF^{-1}\big(m_{2^n}^s\hat{f}\big): 0 \leq n \leq 2^{\kappa_s}\big)
		\leq
		\sqrt{2}
		\sum_{i = 0}^{\kappa_s}
		\Big(
		\sum_{j = 0}^{2^{\kappa_s-i}-1}
		\big|\calF^{-1}\big(m_{2^{(j+1)2^i}}^s\hat{f}\big) - \calF^{-1}\big(m_{2^{j2^i}}^s\hat{f}\big)\big|^2
		\Big)^{1/2}.
\end{align}
Applying now Theorem \ref{th:3} (see also \cite{mst1}) we
shall show that $\ell^p\big(\ZZ^d\big)$ norm of the inner square
function on the right-hand side in \eqref{eq:205} is bounded by
$C_p \log(s+2)\|f\|_{\ell^p}$ for each $0\le i\le \kappa_s$. Consequently,
we get the desired bound since  
there are $\kappa_s+1$ elements. This illustrates roughly the scheme for long $r$-variations.

In order to attack short variations we will again exploit the
partition of unity introduced above and obtain
\begin{multline}
\label{eq:197}
  V_r^S\big(M_{N}^{\calP}f: N\in\NN\big)\le
	\Big(\sum_{n \ge 0}
        V_2\big((M_{N}^{\calP}-M_{2^n}^{\calP})\calF^{-1}\big(\Xi_n\hat{f}\big): N\in
        [2^n, 2^{n+1})\big)^2\Big)^{1/2}\\
	+
	\big(
	\sum_{n \ge 0} V_2\big(\calF^{-1}\big((M_{N}^{\calP}-M_{2^n}^{\calP})(1-\Xi_n)\hat{f}\big): 
	N\in [2^n, 2^{n+1})\big)^2\Big)^{1/2}.
\end{multline}
The last sum corresponds to the  highly oscillatory behaviour of the
multiplier $m_N$. Therefore, invoking  inequality \eqref{eq:39},
Weyl's inequality in Theorem \ref{thm:3} and \eqref{eq:202}, we are
able to prove that the last term in \eqref{eq:197} is bounded on
$\ell^p\big(\ZZ^d\big)$, see Section \ref{sec:5}. 

To bound the first term in \eqref{eq:197} we introduce a tool
reminiscent of the Littlewood-Paley theory. Now as opposed to the
continous theory, for discrete operators there is no known analogue of
the square functions of Littlewood-Paley that give us decisive control
of the operators in question. However as a start in this direction we
consider in Section \ref{sec:5}, (see Theorem \ref{thm:30}), the following family of projections:
\begin{align}
\label{eq:40}
  \Delta_{n, s}^j(\xi)=\sum_{a/q\in\mathscr
    U_{(s+1)^l}\setminus\mathscr
    U_{s^l}}\big(\eta\big(\mathcal E_{n+j}(\xi-a/q)\big)-\eta\big(\mathcal E_{n+j+1}(\xi-a/q)\big)\big)\eta\big(\mathcal E_{s}(\xi-a/q)\big).
\end{align}
and using Theorem \ref{th:3} we will be able to show that for each $p\in(1, \infty)$ there is a constant
$C>0$ such that 
 \begin{align}
   \label{eq:46}
   \Big\|\Big(\sum_{n\in\ZZ}\big|\mathcal
  F^{-1}\big(\Delta_{n,
    s}^j\hat{f}\big)\big|^2\Big)^{1/2}\Big\|_{\ell^p}
	\le 
	C \log(s+2) \|f\|_{\ell^p}.
 \end{align}
uniformly in $j\in\ZZ$. Estimate \eqref{eq:46} can be thought as a
discrete counterpart of Littlewood--Paley inequality and is essential
in our further purposes. Thanks to \eqref{eq:46} we reduce the problem
to showing that
\begin{align}
  \label{eq:176}
  \sum_{s\ge 0}\sum_{j\in\ZZ}
\Big\|\Big(\sum_{n\ge \max\{s, j, -j\}} V_2\big((M_{N}-M_{2^n})
	\calF^{-1}\big(\Delta_{n, s}^j\hat{f}\big): N\in[2^n,
  2^{n+1})\big)^2\Big)^{1/2}\Big\|_{\ell^p}\le C_p\|f\|_{\ell^p}.
\end{align}
In view of inequality \eqref{eq:1} and \eqref{eq:46} we show that the
inner norm in \eqref{eq:176} is dominated for every $p\in(1, \infty)$ by $C_{p}2^{-\varepsilon_p
  |j|}(s+1)^{-2}\|f\|_{\ell^p}$ for some $\varepsilon_p>0$. An
important intermediate step in establishing this bound are the
vector-valued estimates in \cite{mst1}
\[ 
          	\Big\lVert\Big(
	\sum_{t\in\NN}\sup_{N\in\NN}\big|M_N^\calP f_t\big|^2\Big)^{1/2}
	\Big\rVert_{\ell^p}\le
	C_{p}\Big\|\Big(\sum_{t\in\NN}|f_t|^2\Big)^{1/2}\Big\|_{\ell^p}.
\]
        The idea of using vector-valued inequalities allows us to
        overcome many technical difficulties and, as far as we know,
        has not been used in this context before. We also employ
        this idea in the continuous setup and provide a new proof of
        short variations estimates for the operators of Radon type
        in \cite{jsw}. We refer to the Appendix for more details.

        The rest of the paper is organized as follows. In Section
        \ref{sec2} we collected necessary numerical inequalities which
        give relation between $r$-variational seminorms and various
        square functions and even more general objects, see especially Lemma
        \ref{lem:6} and Lemma \ref{lem:8} and inequality
        \eqref{eq:39}.  All these results are important building
        blocks in our approach.  
       Finally, we propose some lifting lemma (see Lemma
        \ref{lem:1}) which allows us to replace any polynomial mapping
        $\calP$ by the canonical polynomial mapping $\calQ$ which has
        all coefficients equal to 1. This guarantees that our further bounds will be
        independent of coefficients of the underlying polynomial
        mapping. 

        In Section \ref{sec:3} we recall further results whose proofs were given in \cite{mst1}. Theorem \ref{thm:3} is a
        variant of multidimensional Weyl's sum estimates with
        logarithmic decay. We also include some basic tools which
        allow us to efficiently compare discrete $\| \cdot
        \|_{\ell^p}$ norms with continuous $\| \cdot \|_{L^p}$
        norms. Finally, Theorem \ref{th:3} is a major step towards
        proving \eqref{eq:202} and \eqref{eq:46}.  This theorem
        originates in Ionescu and Wainger paper \cite{iw} with $(\log
        N)^D$ loss where $D>0$ is a large power.  This is a deep
        result which uses the most sophisticated tools developed to
        date in this area. 

In Section \ref{sec:4} we state Theorem \ref{thm:5} which is the
main result of this Section. However, we omit the proof, since it can
be deduced from the methods of proof of Theorem B from \cite{mst1} by
simply replacing the supremum norm by the long $r$-variational
seminorm; or it can be completed following the scheme of the proof
from Section \ref{sec:6}, which contains long $r$-variational estimates
for the operator $T_N^{\calP}$. We have decided to provide a complete
proof of long $r$-variational estimates for the operator
$T_N^{\calP}$, since there are some subtle differences which did not
occur in \cite{mst1} where the maximal function associated with
$T_N^{\calP}$ was studied, and this would cause some unnecessary confusions.   

In Section \ref{sec:5} and Section \ref{sec:7} we provide detailed
proofs of short variations estimates for operators $M_N^{\calP}$ and
$T_N^{\calP}$ respectively. 

Finally, in the Appendix, which is self-contained, we give a new proof
of strong $r$-variational estimates for the operators of Radon type in
the continuous setting, which are needed above. There are two novel aspects of our proof. The
first concerns a different approach to L\'epingle's inequality for
martingales and is based on Theorem \ref{thm:20} which is a new
ingredient here. The second aspect concerns the estimates
of short variations which now are based to a large extent on
vector-valued estimates for operators of Radon type, which have been
recently obtained in \cite{mst1}. For the reader's convenience we
provide details and describe this method in the
context of dyadic martingales on homogeneous spaces; however, the
methods are general enough to be applicable in a boarder context. 

Finally, let us emphasize the following.

\begin{remark}
  \label{rem:9}
The
  methods of the proof of Theorem \ref{thm:1} and Theorem \ref{thm:4}
allow us to extend inequalities \eqref{eq:3} and \eqref{eq:59} and
establish the following.
\begin{main_theorem}
  \label{thm:100}
	For every $p\in(1, \infty)$  and $r\in(2, \infty)$ there is $C_{p, r} > 0$ such that for all
	$f \in \ell^p\big(\ZZ^{d_0}\big)$
        \begin{align}
\label{eq:37}
          	\big\lVert
	V_r\big( M_t^\calP f: t>0\big)
	\big\rVert_{\ell^p}+\big\lVert
	V_r\big( T_t^\calP f: t>0\big)
	\big\rVert_{\ell^p}\le
	C_{p, r}\|f\|_{\ell^p}.
        \end{align}
	Moreover, the constant $C_{p, r}\le C_p\frac{r}{r-2}$ for some
        $C_p>0$ which is independent of the coefficients of the polynomial
        mapping $\calP$.
\end{main_theorem}
The set of integers in the definition of $r$-variations has been
replaced by the set $(0, \infty)$. The proof of Theorem \ref{thm:100} is presented in Section \ref{sec:9}.
\end{remark}

\begin{remark}
  \label{rem:10}
  The methods of the proof of Theorem \ref{thm:1} and Theorem
  \ref{thm:4} give more general results. Namely, assume that $G$
  is an open bounded convex subset of $\RR^k$ containing the origin,
  and define for any
  $\lambda>0$  $$G_{\lambda}=\{x\in\RR^k: \lambda^{-1}x\in G\}$$  
Define also $\mathbb G_{\lambda}=\{x\in\ZZ^k: \lambda^{-1}x\in G\}$
  for any $\lambda>0$. Then the inequality from Theorem \ref{thm:1} remains
  valid for the averaging operators \eqref{eq:2} defined with $\mathbb G_N$ rather
  than $\mathbb B_N$. Furthermore, if we assume that the cancellation
  condition \eqref{eq:24} holds with $G_{\lambda_2}\setminus
  G_{\lambda_1}$ instead of $B_{\lambda_2}\setminus
  B_{\lambda_1}$ then the conclusion of Theorem \ref{thm:4} remains
  valid for the singular truncated Radon transforms \eqref{eq:190} defined with the summation  taken over
  $\mathbb G_N$ rather than  $\mathbb B_N$. 
\end{remark}

\subsection{Notation}
Throughout the whole article, unless otherwise stated, we will write $A \lesssim B$
($A \gtrsim B$) if there is an absolute constant $C>0$ such that $A\le CB$ ($A\ge CB$).
Moreover, $C > 0$ will stand for a large positive constant whose value may vary from
occurrence to
occurrence. If $A \lesssim B$ and $A\gtrsim B$ hold simultaneously then we will write
$A \simeq B$. We will denote $A \lesssim_{\delta} B$ ($A \gtrsim_{\delta} B$) to
indicate that the constant $C>0$ depends on some $\delta > 0$. Let $\NN_0 = \NN \cup
\{0\}$. For $N \in \NN$
we set
\[
    \NN_N = \big\{1, 2, \ldots, N\big\}, \quad \text{and} \quad
    \ZZ_N = \big\{-N, \ldots, -1, 0, 1, \ldots, N\big\}.
\]
For a vector $x \in \RR^d$ we will use the following norms
\[
    \norm{x}_\infty = \max\{\abs{x_j} : 1 \leq j \leq d\}, \quad \text{and} \quad
    \norm{x} = \Big(\sum_{j = 1}^d \abs{x_j}^2\Big)^{1/2}.
\]
If $\gamma$ is a multi-index from $\NN_0^k$ then $\norm{\gamma} = \gamma_1 + \ldots
+ \gamma_k$. Although we use $|\cdot|$ for the length of a
multi-index $\gamma\in \NN_0^k$
and the Euclidean norm of $x\in\RR^d$, their meaning will be always
clear from the context and it will cause no confusions in
the sequel. 
Finally, let $\mathcal{D}=\{2^n: n\in\NN_0\}$ denote the set of dyadic numbers.
\section{Preliminaries}\label{sec2}

\subsection{Variational norm}
Let $1 \leq r < \infty$. For each sequence $\big(\seq{a_j}{j \in
  A}\big)$ of complex numbers, where $A\subseteq\ZZ$
we define $r$-variational seminorm by
$$
\var{\big(\seq{a_j}{j \in A}\big)} = \sup_{\atop{k_0 < k_1 < \ldots < k_J}{k_j \in A}}
	\Big(\sum_{j = 1}^J \abs{a_{k_j} - a_{k_{j-1}}}^r \Big)^{1/r}
$$
where the supremum is taken over all finite increasing sequences of integers $k_0 < k_1 < \ldots < k_J$.
The function $r \mapsto \var{\big(\seq{a_j}{j \in A}\big)}$ is non-increasing and satisfies
\begin{equation}
	\label{eq:25}
	\sup_{j \in A} \abs{a_j} \leq
	2\var{\big(\seq{a_j}{j \in A}\big)} + \abs{a_{j_0}}
\end{equation}
where $j_0$ is an arbitrary element of $A$. 
For any subset $B \subseteq A$ we have
$$
\var{\big(\seq{a_j}{j \in B}\big)} \leq \var{\big(\seq{a_j}{j \in A}\big)}.
$$
Moreover, if $-\infty\le u < w < v \leq \infty$ then
\begin{equation}
	\label{eq:19}
	\var{\big(\seq{a_j}{u \le j < v}\big)}
	\leq
	2 \sup_{u \le j < v} \abs{a_j}
	+ \var{\big(\seq{a_j}{u \le j < w}\big)}
	+ \var{\big(\seq{a_j}{w \le j < v}\big)}.
\end{equation}
For $r \ge 2$ we also have
\begin{equation}
	\label{eq:11}
	\var{\big(\seq{a_j}{j \in A}\big)}
	\leq 2 \Big(\sum_{j \in A} \abs{a_j}^2\Big)^{1/2}.
\end{equation}
The next lemma will be critical in our further investigations. See also Lemma 2.2 in \cite{mst1}.
\begin{lemma}
	\label{lem:6}
	If $r \in [2, \infty)$ then for any sequence $\big(\seq{a_j}{0 \leq j
          \leq 2^s}\big)$ of complex numbers we have
	\begin{equation}
		\label{eq:62}
		\var{\big(\seq{a_j}{0 \leq j \leq 2^s}\big)}
		\leq
		\sqrt{2}
		\sum_{i = 0}^s
		\Big(
		\sum_{j = 0}^{2^{s-i}-1}
		\sabs{a_{(j+1)2^i} - a_{j 2^i}}^2
		\Big)^{1/2}.
	\end{equation}
\end{lemma}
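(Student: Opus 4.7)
The plan is to reduce to $r = 2$ via monotonicity and then exploit a dyadic decomposition of arbitrary subintervals of $[0, 2^s]$. Since $V_r \le V_2$ for $r \ge 2$, it suffices to prove \eqref{eq:62} with $V_2$ in place of $V_r$. Fix an increasing sequence $0 \le k_0 < k_1 < \ldots < k_J \le 2^s$; the task is to bound $\bigl(\sum_{l=1}^J |a_{k_l}-a_{k_{l-1}}|^2\bigr)^{1/2}$ by the right-hand side of \eqref{eq:62}.

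The geometric input is the elementary fact that every integer subinterval $[u,v] \subseteq [0, 2^s]$ admits a decomposition into basic dyadic intervals $[j 2^i, (j+1) 2^i]$ in which, for each level $i \in \{0, \ldots, s\}$, at most two intervals of that level appear; this is obtained via the greedy procedure that starts at $u$ and at each step removes the longest admissible dyadic block. Applied to each pair $(u,v) = (k_{l-1}, k_l)$ it telescopes into
\[
a_{k_l} - a_{k_{l-1}} = \sum_{i=0}^s \delta_i^{(l)},
\]
where each $\delta_i^{(l)}$ is a sum of at most two basic level-$i$ differences $d_{i,j} := a_{(j+1)2^i} - a_{j 2^i}$.

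Minkowski's inequality in the variable $l$ then yields
\[
\Big(\sum_{l=1}^J |a_{k_l} - a_{k_{l-1}}|^2\Big)^{1/2} \le \sum_{i=0}^s \Big(\sum_{l=1}^J |\delta_i^{(l)}|^2\Big)^{1/2}.
\]
Since the intervals $(k_{l-1}, k_l]$ are pairwise disjoint, the basic level-$i$ dyadic intervals appearing in their respective decompositions are also pairwise disjoint, so each $d_{i,j}$ contributes to at most one $\delta_i^{(l)}$. Applying the pointwise bound $|a+b|^2 \le 2(|a|^2 + |b|^2)$ to each $\delta_i^{(l)}$ and summing over $l$ gives $\sum_l |\delta_i^{(l)}|^2 \le 2 \sum_{j=0}^{2^{s-i}-1} |d_{i,j}|^2$. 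Substituting back and taking the supremum over admissible sequences delivers \eqref{eq:62} with the constant $\sqrt{2}$ arising precisely from this last inequality. The main obstacle is really just verifying the combinatorial claim about the greedy decomposition; this is proved by tracking the $2$-adic valuation of the current position, which first strictly increases (while it is the $2$-adic valuation that constrains the block size) and then strictly decreases (once the remaining space to $v$ becomes the binding constraint), so each level appears at most once in each monotone phase.
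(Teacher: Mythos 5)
Your proof is correct and follows essentially the same route as the paper: the identical greedy dyadic decomposition with at most two blocks per level (the paper argues this by showing that once consecutive block lengths cease to increase they must strictly decrease, while you phrase it via the two monotone phases of the $2$-adic valuation, which is the same observation), followed by Minkowski's inequality in the outer index and the elementary bound $|a+b|^2\le 2(|a|^2+|b|^2)$ yielding the factor $\sqrt{2}$. The reduction to $r=2$ via monotonicity of $V_r$ is implicit in the paper and correctly made explicit by you.
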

\begin{proof}
	Let us observe that any interval $[m, n)$ for $m, n \in \NN$ such that $0 \leq m < n \leq 2^s$,
	is a finite disjoint union of dyadic subintervals, i.e. intervals belonging to some $\calI_i$ for
	$0 \leq i \leq s$, where
	$$
	\calI_i = \big\{[j2^i, (j+1)2^i): 0 \leq j \leq 2^{s-i}-1\}
	$$
	and such that each length appears at most twice. For the proof, we set $m_0 = m$. Having
	chosen $m_p$ we select $m_{p+1}$ in such a way that $[m_p, m_{p+1})$ is the longest dyadic
	interval starting at $m_p$ and contained inside $[m_p, n)$. If the lengths of the selected
	dyadic intervals increase then we are done. Otherwise, there is $p$ such that
	$m_{p+1} - m_p \geq m_{p+2} - m_{p+1}$. We show that this implies
	$m_{p+2} - m_{p+1} > m_{p+3} - m_{p+2}$. Suppose for a contradiction that,
	$m_{p+2} - m_{p+1} \leq m_{p+3} - m_{p+2}$. Then
	$$
	[m_{p+1}, 2 m_{p+2} - m_{p+1}) \subseteq [m_{p+1}, m_{p+3}).
	$$
	Therefore, it is enough to show that $2(m_{p+2} - m_{p+1})$ divides $m_{p+1}$. It is clear
	in the case $m_{p+1} - m_p > m_{p+2} - m_{p+1}$. If $m_{p+1} - m_p = m_{p+2} - m_{p+1}$ then,
	by the maximality of $[m_p, m_{p+1})$, $2(m_{p+2} - m_{p+1})$ cannot divide $m_p$, thus divides
	$m_{p+1}$.
	
	Next, let $k_0 < k_1 < \ldots < k_J \leq 2^s$ be any increasing sequence. For
	each $j \in \{0, \ldots, J-1\}$ we may write
	$$
	[k_j, k_{j+1}) = \bigcup_{p=0}^{P_j} [u_p^j, u_{p+1}^j)
	$$
	for some $P_j\ge1$ where each interval $[u_p^j, u^j_{p+1})$ is dyadic. Then
	$$
	\lvert
	a_{k_{j+1}} - a_{k_j}
	\rvert
	\leq
	\sum_{p = 0}^{P_j}
	\big \lvert
	a_{u_{p+1}^j} - a_{u_p^j}
	\big \rvert
	=
	\sum_{i = 0}^s
	\sum_{p:\: [u_p^j, u_{p+1}^j) \in \calI_i}
	\big\lvert
	a_{u_{p+1}^j} - a_{u_p^j}
	\big\rvert.
	$$
	Hence, by Minkowski's inequality
	\begin{multline*}
		\Big(
		\sum_{j = 0}^{J-1}
		\sabs{a_{k_{j+1}} - a_{k_j}}^2
		\Big)^{1/2}
		\leq
		\Big(
		\sum_{j = 0}^{J-1}
		\Big(
		\sum_{i = 0}^s
		\sum_{p:\: [u_p^j, u_{p+1}^j) \in \calI_i}
		\big\lvert a_{u^j_p} - a_{u^j_{p+1}} \big\rvert
		\Big)^2
		\Big)^{1/2}\\
		\leq
		\sum_{i = 0}^s
		\Big(
		\sum_{j = 0}^{J-1}
		\Big(
		\sum_{p:\: [u_p^j, u_{p+1}^j) \in \calI_i}
		\big\lvert a_{u^j_p} - a_{u^j_{p+1}} \big\rvert
		\Big)^2
		\Big)^{1/2}.
	\end{multline*}
	Since for a given $i \in \{0, 1, \ldots, 2^s\}$ and $j \in \{0, 1, \ldots, J-1\}$
	the inner sums contain at most two elements we obtain
	$$
	\Big(
	\sum_{j = 0}^{J-1}
	\sabs{a_{k_{j+1}} - a_{k_j}}^2
	\Big)^{1/2}
	\le
	\sqrt{2}
	\sum_{i = 0}^s
	\Big(
	\sum_{j = 0}^{J-1}
	\sum_{p:\: [u_p^j, u_{p+1}^j) \in \calI_i}
	\big\lvert a_{u^j_p} - a_{u^j_{p+1}} \big\rvert^2
	\Big)^{1/2}
	$$
	which is bounded by the right-hand side of \eqref{eq:62}.
\end{proof}
A \emph{long variation seminorm} $V_r^L$ of a sequence $\big(\seq{a_j}{j \in A}\big)$, is given by
\begin{align*}
	V_r^L\big(a_j: j\in A\big)=V_{r}\big(a_j: j\in A\cap\mathcal{D}\big).
\end{align*}
A \emph{short variation seminorm} $V^S_r$ is given by
\begin{align*}
	V^S_r\big(a_j: j\in A\big)
	=
	\Big(\sum_{n \ge 0} V_r\big(a_j: j \in A_n\big)^r\Big)^{1/r}
\end{align*}
where $A_n = A \cap [2^n, 2^{n+1})$. Then
\begin{align}
	\label{eq:32}
	V_r\big(a_j: j\in \NN\big)
	\lesssim
	V_r^L\big(a_j: j\in \NN\big)
	+V_r^S\big(a_j: j\in \NN\big).
\end{align}
The next lemma will be used in the estimates for short variations. It illustrates the ideas
which have been explored several times (see \cite{jkrw}, or recently
\cite{k, zk}).
\begin{lemma}
\label{lem:8}
	Let $u, v \in \NN$, $u < v$. For any integer $h \in \{1,\ldots, v-u\}$ there is a strictly
	increasing sequence of integers $\big(\seq{t_j}{0 \leq j \leq h}\big)$ with $t_0 = u$ and
	$t_h = v$ such that for every $r\in[1, \infty)$
        \begin{align}
          \label{eq:1}
	V_r\big(a_j: u \leq j \leq v\big)
          \lesssim
		 \Big(\sum_{j=0}^h|a_{t_j}|^r\Big)^{1/r}
		+\Big(\sum_{j=0}^{h-1}\Big(\sum_{k=t_j}^{t_{j+1}-1}|a_{k+1}-a_k|\Big)^r\Big)^{1/r}.
        \end{align}
Moreover, if $p\ge r$ then
\begin{multline}
  \label{eq:5}
\Big(\sum_{j=0}^h|a_{t_j}|^r\Big)^{1/r}
		+\Big(\sum_{j=0}^{h-1}\Big(\sum_{k=t_j}^{t_{j+1}-1}|a_{k+1}-a_k|\Big)^r\Big)^{1/r}\\
\lesssim
          h^{1/r-1/p}\Big(\sum_{j=0}^h|a_{t_j}|^p\Big)^{1/p}
		+h^{1/r-1}(v-u)^{1-1/p}\bigg(\sum_{j=u}^{v-1}|a_{j+1}-a_j|^p\bigg)^{1/p}.
\end{multline}
The implicit constants in \eqref{eq:1} and \eqref{eq:5} are
independent of $h, u$ and $v$.
\end{lemma}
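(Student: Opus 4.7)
The plan is to construct the partition $(t_j)$ by greedily balancing the cumulative variation, then verify the two inequalities separately. Set $T := \sum_{k=u}^{v-1}|a_{k+1}-a_k|$ and $S(i) := \sum_{k=u}^{i-1}|a_{k+1}-a_k|$, so $S(u)=0$ and $S(v)=T$; the case $T=0$ is trivial, so assume $T>0$. Put $t_0=u$, $t_h=v$, and for $1\leq j\leq h-1$ let $t_j$ be the smallest index in $(t_{j-1}, v)$ with $S(t_j) \geq jT/h$. Writing $b_j := \sum_{k=t_j}^{t_{j+1}-1}|a_{k+1}-a_k| = S(t_{j+1})-S(t_j)$, the minimality of $t_{j+1}$ forces $S(t_{j+1}-1) < (j+1)T/h$, yielding the key pointwise estimate $b_j \leq T/h + |a_{t_{j+1}}-a_{t_{j+1}-1}|$.

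For the first inequality, fix an arbitrary increasing sequence $k_0<\ldots<k_J$ in $[u,v]$, and let $j(i)$ be the unique index with $k_i \in [t_{j(i)}, t_{j(i)+1})$ (and $j(i)=h-1$ if $k_i=v$). Split $\{1,\ldots,J\}$ into a within-block part $A = \{i : j(i) = j(i-1)\}$ and a cross-block part $B = \{i : j(i) > j(i-1)\}$. For $i \in A$ the telescoping bound $|a_{k_i}-a_{k_{i-1}}| \leq \sum_{k=k_{i-1}}^{k_i-1}|a_{k+1}-a_k|$ combined with the $r$th-power inequality $\sum x_i^r \le (\sum x_i)^r$ (valid for $r \geq 1$, non-negative $x_i$) gives $\bigl(\sum_{i\in A}|a_{k_i}-a_{k_{i-1}}|^r\bigr)^{1/r} \leq \bigl(\sum_j b_j^r\bigr)^{1/r}$, since the contributions inside a single block $j$ telescope to at most $b_j$. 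For $i \in B$ I would route the triangle inequality through both endpoints $t_{j(i)}$ and $t_{j(i-1)+1}$:
\[
|a_{k_i}-a_{k_{i-1}}| \leq |a_{k_i}-a_{t_{j(i)}}| + |a_{t_{j(i)}}| + |a_{t_{j(i-1)+1}}| + |a_{t_{j(i-1)+1}}-a_{k_{i-1}}|.
\]
Since both $j(i)$ and $j(i-1)$ are strictly increasing along $B$, each breakpoint index appears at most once in each of the four sums (the corner sums being controlled by distinct $b_{j(i)}$, $b_{j(i-1)}$), and Minkowski's inequality bounds the cross-block part by a constant multiple of $\bigl(\sum_j |a_{t_j}|^r\bigr)^{1/r} + \bigl(\sum_j b_j^r\bigr)^{1/r}$, completing the first inequality.

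For the second inequality, assume $p \geq r$. Hölder's inequality immediately gives $\bigl(\sum_{j=0}^h |a_{t_j}|^r\bigr)^{1/r} \leq (h+1)^{1/r-1/p}\bigl(\sum_j |a_{t_j}|^p\bigr)^{1/p}$, matching the first RHS term. Using the pointwise bound $b_j \leq T/h + |a_{t_{j+1}}-a_{t_{j+1}-1}|$ together with Minkowski yields
\[
\Big(\sum_{j} b_j^r\Big)^{1/r} \lesssim h^{1/r-1}T + \Big(\sum_{k=u}^{v-1}|a_{k+1}-a_k|^r\Big)^{1/r},
\]
and a further Hölder application controls $T$ by $(v-u)^{1-1/p}\bigl(\sum|a_{k+1}-a_k|^p\bigr)^{1/p}$ and the last sum by $(v-u)^{1/r-1/p}\bigl(\sum|a_{k+1}-a_k|^p\bigr)^{1/p}$; since $h \leq v-u$ both are dominated by $h^{1/r-1}(v-u)^{1-1/p}\bigl(\sum|a_{k+1}-a_k|^p\bigr)^{1/p}$, as required.

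The main obstacle I anticipate is the cross-block bookkeeping in the first inequality: the four-term routing is forced by the shape of the RHS, which features bare absolute values $|a_{t_j}|$ rather than differences, and one must verify that the distinctness of both $j(i)$ and $j(i-1)$ on $B$ prevents any breakpoint from being double-counted so that no spurious factor of $h^{1/r}$ enters. The corresponding delicacy in the second inequality is that the greedy partition must absorb an arbitrary single large jump $|a_{t_{j+1}}-a_{t_{j+1}-1}|$ into the block bound, which is precisely the source of the additional $\ell^r$-sum of increments, and hence of the extra $(v-u)^{1-1/p}$ factor rather than the sharper $(v-u)^{1/r-1/p}$.
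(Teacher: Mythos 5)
Your greedy construction of the partition is not always well-defined, and this is a genuine gap. Take $u=0$, $v=3$, $h=3$, and $a_0=a_1=a_2=0$, $a_3=1$, so that $T=1$. The rule asks for the smallest index $t_1\in(0,3)$ with $S(t_1)\geq T/3$; but $S(1)=S(2)=0<1/3$, so no such index exists, and the construction halts. More generally, whenever the variation concentrates near $v$, the greedy indices pile up at or beyond $v$ and leave no room for a strictly increasing $t_1<\dots<t_{h-1}<v$. Any patch that caps the $t_j$'s below $v$ to restore strict monotonicity destroys the one-sided bound $S(t_{j+1}-1)<(j+1)T/h$, and with it the pointwise estimate $b_j\leq T/h+|a_{t_{j+1}}-a_{t_{j+1}-1}|$ on which your Minkowski argument for \eqref{eq:5} rests. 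So as written the proof does not exhibit a valid sequence $(t_j)$, which is precisely what the lemma asserts exists.

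The paper avoids this entirely by choosing the (nearly) equispaced partition $t_{j+1}-t_j\simeq(v-u)/h$, which is possible since $h\leq v-u$. For that choice, \eqref{eq:1} follows from the standard block decomposition $V_r(a_j\colon u\leq j\leq v)\lesssim\big(\sum_j|a_{t_j}|^r\big)^{1/r}+\big(\sum_j V_r(a_k\colon t_j\leq k\leq t_{j+1})^r\big)^{1/r}$ together with $V_r\leq V_1$ on each block; your within-block/cross-block bookkeeping is, in effect, a correct but elaborate proof of that block decomposition, so that part of your argument is sound. For \eqref{eq:5} the equispaced choice lets two applications of H\"older do all the work: the $\ell^r\hookrightarrow\ell^p$ embedding on the $h$-term outer sum produces the factor $h^{1/r-1/p}$, and H\"older on the inner sum over $k\in[t_j,t_{j+1})$ produces $(t_{j+1}-t_j)^{1-1/p}\simeq\big((v-u)/h\big)^{1-1/p}$; these combine to $h^{1/r-1}(v-u)^{1-1/p}$ without any variation-balancing of the partition. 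You should either switch to the equispaced partition and run H\"older as above, or else supply a construction that genuinely produces a strictly increasing sequence ending at $v$ together with a replacement for the pointwise block bound.
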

\begin{proof}
        Fix $h \in \{1,\ldots, v-u\}$ and choose a sequence
	$\big(\seq{t_j}{1 \leq j \leq h}\big)$ such that
	$t_0 = u$, $t_h = v$ and $|t_{j+1}-t_j| \simeq (v-u)/h$. Then
	\begin{multline*}
		V_r\big(a_j: u \leq j \leq v\big)
		\lesssim \Big(\sum_{j=0}^h|a_{t_j}|^r\Big)^{1/r}
		+\Big(\sum_{j=0}^{h-1}V_r\big(a_k: t_j \leq k \le t_{j+1}\big)^r\Big)^{1/r}\\
		\lesssim
		 \Big(\sum_{j=0}^h|a_{t_j}|^r\Big)^{1/r}
		+\Big(\sum_{j=0}^{h-1}\Big(\sum_{k=t_j}^{t_{j+1}-1}|a_{k+1}-a_k|\Big)^r\Big)^{1/r}.
	\end{multline*}
        If $p\ge r$ then by H\"{o}lder's inequality the last sum can
        be dominated by
        \begin{multline*}
          \Big(\sum_{j=0}^h|a_{t_j}|^r\Big)^{1/r}
		+\bigg(\sum_{j=0}^{h-1}\Big(\sum_{k=t_j}^{t_{j+1}-1}|a_{k+1}-a_k|\Big)^r\bigg)^{1/r}\\
                \lesssim 
          h^{1/r-1/p}\Big(\sum_{j=0}^h|a_{t_j}|^p\Big)^{1/p}
		+h^{1/r-1/p}\bigg(\sum_{j=0}^{h-1}\Big(\sum_{k=t_j}^{t_{j+1}-1}|a_{k+1}-a_k|\Big)^p\bigg)^{1/p}\\
                \lesssim
          h^{1/r-1/p}\Big(\sum_{j=0}^h|a_{t_j}|^p\Big)^{1/p}
		+h^{1/r-1/p}\bigg(\sum_{j=0}^{h-1}(t_{j+1}-t_j)^{p(1-1/p)}\Big(\sum_{k=t_j}^{t_{j+1}-1}|a_{k+1}-a_k|^p\Big)\bigg)^{1/p}\\
                \lesssim
          h^{1/r-1/p}\Big(\sum_{j=0}^h|a_{t_j}|^p\Big)^{1/p}
		+h^{1/r-1}(v-u)^{1-1/p}\Big(\sum_{j=u}^{v-1}|a_{j+1}-a_j|^p\Big)^{1/p}
        \end{multline*}
and this completes the proof of the lemma.
\end{proof}
We observe that, if $1\le r\le p$ and $(f_j: j\in\NN)$ is a sequence of functions in $\ell^p\big(\ZZ^d\big)$ and $v-u\ge2$ then
\begin{align}
	\label{eq:39}
	\big\|V_r\big(f_j: j\in [u, v]\big)\big\|_{\ell^p}
	\lesssim
	\max\big\{\mathbf U_p,
	(v-u)^{1/r} \mathbf U_p^{1-1/r} \mathbf V_p^{1/r}\big\}
\end{align}
where
$$
\mathbf U_p =	\max_{u \leq j \leq v}\|f_j\|_{\ell^p},
\quad \text{and} \quad
\mathbf V_p = \max_{u \leq j < v}\|f_{j+1}-f_j\|_{\ell^p}.
$$
Indeed, let
$$
h=\big\lceil (v-u) \mathbf V_p / (4\mathbf U_p)\big\rceil.
$$
Then $h \in [1, v- u]$. If $h\ge2$, by Lemma \ref{lem:8}, we have
$$
\big\|V_r\big(f_j: u \leq j \leq v\big)\big\|_{\ell^p}
\lesssim h^{1/r}
\mathbf U_p  + h^{1/r-1}(v-u) \mathbf V_p 
\lesssim
(v-u)^{1/r} \mathbf U_p^{1-1/r} \mathbf V_p^{1/r}.
$$
If $h=1$ then
$\mathbf V_p \lesssim (v - u)^{-1} \mathbf U_p$ and hence
$$
\big\|V_r\big(f_j: u \leq j \leq v\big)\big\|_{\ell^p}
\lesssim
\mathbf U_p.
$$

\subsection{Lifting lemma}
Let $\calP=(\calP_1,\ldots, \calP_{d_0}): \ZZ^k \rightarrow \ZZ^{d_0}$ be a mapping whose
components $\calP_j$ are integer-valued polynomials on $\ZZ^k$ such that $\calP_j(0) = 0$. We set
$$
N_0 = \max\{ \deg \calP_j : 1 \leq j \leq d_0\}.
$$
It is convenient to work with the set
$$
\Gamma =
\big\{
	\gamma \in \ZZ^k \setminus\{0\} : 0 \leq \gamma_j \leq N_0
	\text{ for each } j = 1, \ldots, k
\big\}
$$
with the lexicographic order. Then each $\calP_j$ can be expressed as
$$
\calP_j(x) = \sum_{\gamma \in \Gamma} c_j^\gamma x^\gamma
$$
for some $c_j^\gamma \in \ZZ$. Let us denote by $d$ the cardinality of the set $\Gamma$.
We identify $\RR^d$ with the space of all vectors whose coordinates are labelled by multi-indices
$\gamma \in \Gamma$. Let $A$ be a diagonal $d \times d$ matrix such that
$$
(A v)_\gamma = \abs{\gamma} v_\gamma.
$$
For $t > 0$ we set
$$
t^{A}=\exp(A\log t)
$$
i.e. $t^A x=(t^{|\gamma|}x_{\gamma}: \gamma\in \Gamma)$ for any $x\in \RR^d$. Next, we introduce the \emph{canonical}
polynomial mapping
$$
\calQ = \big(\seq{\calQ_\gamma}{\gamma \in \Gamma}\big) : \ZZ^k \rightarrow \ZZ^d
$$
where $\calQ_\gamma(x) = x^\gamma$ and $x^\gamma=x_1^{\gamma_1}\cdot\ldots\cdot x_k^{\gamma_k}$.
The coefficients $\big(\seq{c_j^\gamma}{\gamma \in \Gamma, j \in \{1, \ldots, d_0\}}\big)$ define
a linear transformation $L: \RR^d \rightarrow \RR^{d_0}$ such that $L\calQ = \calP$. Indeed, it is
enough to set
$$
(L v)_j = \sum_{\gamma \in \Gamma} c_j^\gamma v_\gamma
$$
for each $j \in \{1, \ldots, d_0\}$ and $v \in \RR^d$. The next lemma, inspired by the continuous analogue (see \cite{deL} or \cite[p. 515]{bigs})
reduces proofs of Theorem \ref{thm:1} and Theorem \ref{thm:4} to the canonical polynomial mapping. See also Lemma 2.1 in \cite{mst1}.
\begin{lemma}
	\label{lem:1}
        Let $R_N^{\calP}$ be one of the operators $M_N^{\calP}$ or
        $T_N^{\calP}$. Suppose that for some $p \in (1, \infty)$ and
        $r \in(2, \infty)$
	$$
	\big\lVert
	\bvar{\big(\seq{R_N^\calQ f}{N \in \NN}\big)}
	\big\rVert_{\ell^p(\ZZ^d)}
	\leq C_{p, r}
	\vnorm{f}_{\ell^p(\ZZ^d)}.
	$$
	Then
	\begin{equation}
		\label{eq:6}
		\big\lVert
		\bvar{\big(\seq{R_N^\calP f}{N \in \NN}\big)}
		\big\rVert_{\ell^p(\ZZ^{d_0})}
		\leq
		C_{p, r}
		\vnorm{f}_{\ell^p(\ZZ^{d_0})}.
	\end{equation}
\end{lemma}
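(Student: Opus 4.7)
The plan is to transfer the assumed estimate on $\ZZ^d$ to the target estimate on $\ZZ^{d_0}$ by lifting $f$ through the linear map $L\colon\RR^d\to\RR^{d_0}$, exploiting the factorization $\calP = L\calQ$. The starting observation is the pointwise identity
\[
        R_N^{\calQ}(f \circ L)(z) = R_N^{\calP} f(Lz), \qquad z \in \ZZ^d,\ N \in \NN,
\]
valid for both $R_N = M_N$ and $R_N = T_N$: both operators act by shifting the argument by $\calP(y) = L\calQ(y)$, and in the singular integral case the kernel $K$ on $\RR^k$ appears identically on both sides. Consequently, the variational seminorm in $N$ matches pointwise in $z$. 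The only obstruction to feeding $f\circ L$ directly into the hypothesis is that $f \circ L \notin \ell^p(\ZZ^d)$ whenever $d > \operatorname{rank} L$, since preimages along $\ker L \cap \ZZ^d$ are unbounded, so a compactly supported truncation is required.

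Fix a finitely supported $f$ (by density) and a finite cutoff $N_{\max}$ on the variation (by monotone convergence). For $M \gg N_{\max}^{N_0}$ set $Q_M := [-M, M]^d \cap \ZZ^d$ and the inner region
\[
        Q_M' := \big\{z \in Q_M : z - \calQ(y) \in Q_M \text{ for every } y \in \BB_{N_{\max}}\big\}.
\]
Since $|Q_M \setminus Q_M'| = O(M^{d-1} N_{\max}^{N_0})$, we have $|Q_M'|/|Q_M| \to 1$ as $M \to \infty$. Define the truncated lift $F_M(z) := f(Lz)\ind{Q_M}(z) \in \ell^p(\ZZ^d)$; for $z \in Q_M'$ and $N \leq N_{\max}$ the cut-off is invisible, so $R_N^{\calQ} F_M(z) = R_N^{\calP} f(Lz)$. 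Invoking the assumed estimate for $F_M$ and restricting the left-hand side to $z \in Q_M'$ yields
\[
        \sum_{z \in Q_M'}\big|\bvar{\big(\seq{R_N^{\calP} f(Lz)}{1 \leq N \leq N_{\max}}\big)}\big|^p \leq C_{p,r}^p \sum_{z \in Q_M} |f(Lz)|^p.
\]

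Both sides rearrange as weighted sums over $x \in \ZZ^{d_0}$ with weight $\#\{z \in Q_M' : Lz = x\}$ on the left and $\#\{z \in Q_M : Lz = x\}$ on the right. Letting $\Lambda := \ker L \cap \ZZ^d$, which has rank $d - \operatorname{rank} L$, a standard lattice-point count gives
\[
        \#\{z \in Q_M : Lz = x\} = c_\Lambda (2M)^{d - \operatorname{rank} L}(1 + o(1))
\]
uniformly for $x$ in any fixed finite set, and the same asymptotic holds with $Q_M$ replaced by $Q_M'$. If $\operatorname{rank} L < d_0$, first decompose $f = \sum_a f_a$ along cosets of $L(\ZZ^d)\subseteq\ZZ^{d_0}$, which are preserved by $R_N^{\calP}$ since $\calP(y) \in L(\ZZ^d)$, and apply the preceding argument on each coset before summing back. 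Dividing the displayed inequality by $c_\Lambda(2M)^{d-\operatorname{rank} L}$, letting $M \to \infty$, and finally sending $N_{\max}\to\infty$ by monotone convergence delivers \eqref{eq:6}. The key technical point, and where the most care is required, is that the lattice-point asymptotics on $Q_M$ and $Q_M'$ share \emph{identical} leading constants (not merely comparable ones); this exact matching is precisely what preserves the constant $C_{p,r}$ without loss as we pass to the limit.
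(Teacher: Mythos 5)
Your proposal is correct and is the standard lifting argument that the paper defers to \cite{mst1} (and which, in turn, discretizes the continuous transference in \cite{deL} and \cite[p.\ 515]{bigs}): use the pointwise identity $R_N^{\calQ}(f\circ L)(z)=R_N^{\calP}f(Lz)$, truncate the lift to a large box so it lies in $\ell^p(\ZZ^d)$, restrict the resulting variational inequality to the inner region where the truncation is invisible, rewrite both sides as weighted sums over $x\in\ZZ^{d_0}$, and let the box grow so that the two lattice-point densities cancel and the constant $C_{p,r}$ is preserved exactly. Two minor points worth tightening: the boundary-shell thickness should be $N_{\max}^{N_0 k}$ rather than $N_{\max}^{N_0}$ (since $|\gamma|$ can be as large as $N_0 k$ for $\gamma\in\Gamma$), and the cancellation of leading constants should be justified by counting lattice points of the coset $z_0+\Lambda$ in the boundary collar $Q_M\setminus Q_M'$ (which is $O(M^{d-\operatorname{rank}L-1})$) rather than by the full-dimensional ratio $|Q_M'|/|Q_M|\to 1$; neither issue affects the validity of the argument.
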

\begin{proof}
For the proof we refer to \cite{mst1}.
\end{proof}
From now on $M_N$ and  $T_N$ will denote the operators defined for the canonical
polynomial mapping $\calQ$, i.e. $M_N = M_N^\calQ$ and $T_N = T_N^\calQ$.

\section{Further tools}
\label{sec:3}
\subsection{Gaussian sums}

 For $q\in\NN$ let us define
\[
A_q=\big\{a\in\NN_q^d: \mathrm{gcd}\big(q, \mathrm(a_{\gamma}: \gamma\in\Gamma)\big)\big\}.
\]
Next, for $q \in \NN$ and $a \in A_q$ we define the \emph{Gaussian sum} 
$$
G(a/q) = q^{-k} \sum_{y \in \NN^k_q} e^{2\pi i \sprod{(a/q)}{\calQ(y)}}.
$$
Let us observe that, by the multi-dimensional variant of
Weyl's inequality (see \cite[Proposition 3]{SW0}), there exists $\delta>0$ such that
\begin{equation}
	\label{eq:20}
	\lvert G(a/q) \rvert \lesssim q^{-\delta}.
\end{equation}

\subsection{Weyl's estimates}
 Let $P$ be a polynomial in $\RR^k$ of degree $d \in\NN$ such that
\[
	P(x) = \sum_{0 < \norm{\gamma} \leq d} \xi_\gamma x^\gamma.
\]
Given $N \geq 1$, let $\Omega_N$ be a convex set such that
\[
 \Omega_N \subseteq B_{cN}(x_0)
\]
for some $x_0\in\RR^k$ and $c > 0$, where $B_r(x_0)=\big\{x \in \RR^k : \norm{x-x_0} \leq r \big\}$.
 We define
\[
	S_N = \sum_{n \in \Omega_N \cap \ZZ^k} e^{2\pi i P(n)}\varphi(n)
\]
where $\varphi:\RR^k\rightarrow \CC$ is 
 a $\mathcal
C^1\big(\RR^k\big)$ function which for some $C>0$ satisfies
\[
	|\varphi(x)|\le C, \qquad \text{and} \qquad |\nabla \varphi(x)|\le C(1+|x|)^{-1}.  
\]
In \cite{mst1} we proved the following refinement  of multi-dimensional
Weyl's inequality.
\begin{theorem}
	\label{thm:3}
        Assume that there is a multi-index $\gamma_0$ such that $0 < \norm{\gamma_0} \leq d$ and
	\[
		\Big\lvert
		\xi_{\gamma_0} - \frac{a}{q}
		\Big\rvert
		\leq
		\frac{1}{q^2}
	\]
        for some integers $a, q$ such that $0\le a\le q$ and $(a, q) =
        1$. Then for any $\alpha>0$ there is  $\beta_{\alpha}>0$
          so that, for any
        $\beta\ge \beta_{\alpha}$, if
	\[
		(\log N)^\beta \leq q \leq N^{\norm{\gamma_0}} (\log N)^{-\beta}
	\]
        then there is a constant $C>0$
	\[ 
		|S_N|
		\leq
		C
		N^k (\log N)^{-\alpha}.
	\]
	The implied  constant $C$ is independent of $N$. 
\end{theorem}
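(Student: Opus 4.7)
The plan is to reduce the smoothly--weighted exponential sum over the convex set $\Omega_N$ to the standard multidimensional Weyl bound and then convert its power--saving form into logarithmic decay by exploiting the Diophantine hypothesis on $\xi_{\gamma_0}$.

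First I would eliminate the weight $\varphi$. Using $|\varphi|\le C$ and $|\nabla\varphi(x)|\lesssim(1+|x|)^{-1}$ together with the convexity of $\Omega_N\subseteq B_{cN}(x_0)$, a slice-by-slice Abel summation rewrites $S_N$ as a bounded average of un-weighted exponential sums $T_{N,U}=\sum_{n\in U\cap\ZZ^k}e^{2\pi i P(n)}$ over convex sub-regions $U\subseteq\Omega_N$, at the cost of at most a factor $(\log N)^{C_0}$ for some harmless $C_0=C_0(k)$. The convexity of $\Omega_N$ guarantees that each one-dimensional slice is an interval, so summation by parts produces no uncontrolled boundary contributions beyond finitely many endpoint terms that are $O(N^{k-1})$ and therefore negligible.

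Next, I would apply multidimensional Weyl differencing to each $T_{N,U}$. Differencing $P$ repeatedly (roughly $d-1$ times) and applying Cauchy--Schwarz at each step isolates a linear phase of the form $2\pi i\,\xi_{\gamma_0}\, m(h)\,n$ modulo lower--order terms, where $m(h)$ is a nonzero polynomial in the difference parameters $h$ on a positive proportion of the cube $|h|\lesssim N$. Bounding the resulting inner geometric sums by the one-dimensional Weyl estimate and reassembling via Cauchy--Schwarz, as in Proposition~3 of \cite{SW0}, yields
\[
|S_N| \lesssim N^k (\log N)^{C_0}\bigl(N^{-1}+q^{-1}+qN^{-\norm{\gamma_0}}\bigr)^{\sigma_d}
\]
for some $\sigma_d>0$ depending only on $d$ and $k$.

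Finally, the Diophantine hypothesis $(\log N)^\beta \le q \le N^{\norm{\gamma_0}}(\log N)^{-\beta}$ yields simultaneously $q^{-1}\le(\log N)^{-\beta}$ and $qN^{-\norm{\gamma_0}}\le(\log N)^{-\beta}$, while $N^{-1}\le(\log N)^{-\beta}$ holds trivially for large $N$. Hence $|S_N|\lesssim N^k(\log N)^{C_0-\sigma_d\beta}$, and it suffices to choose $\beta_\alpha=\lceil(\alpha+C_0)/\sigma_d\rceil$. The main obstacle is the bookkeeping in the multidimensional Weyl differencing: one must verify that after reducing to the single direction picked out by $\gamma_0$ the surviving effective leading coefficient inherits, for generic difference parameters $h$, a rational approximation of quality comparable to that of $\xi_{\gamma_0}\approx a/q$, so that the one-dimensional Weyl bound applies across the full range of $q$ permitted by the hypothesis. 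The smooth weight $\varphi$ and the convex (rather than box) shape of $\Omega_N$ are only secondary technical issues that are absorbed by the Abel summation above.
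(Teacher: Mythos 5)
Your overall strategy --- strip the smooth weight $\varphi$ by summation by parts along slices of the convex region, apply multidimensional Weyl differencing to reach a power-saving bound of the form $(N^{-1}+q^{-1}+qN^{-|\gamma_0|})^{\sigma_d}$, and then feed in the Diophantine hypothesis to extract $(\log N)^{-\alpha}$ decay --- is the right one, and it is what the cited reference \cite{mst1} (where this paper defers the proof of Theorem~\ref{thm:3}) carries out. The final quantifier bookkeeping, taking $\beta_\alpha \simeq (\alpha+C_0)/\sigma_d$, is also correct, as is the observation that summation by parts along 1D slices costs only $(\log N)^{C_0}$ since $\int_0^{cN}(1+r)^{-1}\,dr\lesssim\log N$.

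The genuine gap is in the displayed intermediate bound. You invoke Proposition~3 of \cite{SW0} to get
$|S_N|\lesssim N^k(\log N)^{C_0}\bigl(N^{-1}+q^{-1}+qN^{-|\gamma_0|}\bigr)^{\sigma_d}$,
but that proposition is stated with an $N^{\varepsilon}$ loss, not a $(\log N)^{C_0}$ loss, and an $N^\varepsilon$ loss is fatal here rather than merely cosmetic: under the hypothesis $q$ may be as small as $(\log N)^{\beta}$, so the Weyl saving is only $(\log N)^{-\sigma_d\beta}$, which cannot absorb any positive power of $N$; the resulting estimate $N^{k+\varepsilon}(\log N)^{-\sigma_d\beta}$ is worse than trivial. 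Sharpening the $N^\varepsilon$ to a purely logarithmic loss --- by tracking divisor-function counts and Vaughan-type lemmas such as $\sum_{m\le M}\min(H,\|m\alpha\|^{-1})\ll(1+M/q)(H+q\log q)$ through every stage of the $k$-dimensional, degree-$|\gamma_0|$ differencing, rather than absorbing them into $N^\varepsilon$ --- is precisely the refinement that makes this theorem nontrivial (hence the paper's emphasis on ``Weyl's sum estimates with logarithmic decay''). Your closing remark flags the ``bookkeeping'' in the differencing as the main obstacle, which is accurate, but as written the proof leans on a citation that does not deliver the needed logarithmic form, so that step must be reproved from scratch with the sharper accounting.
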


\subsection{Transference principle}

Let $\calF$ denote the Fourier transform on $\RR^d$ defined for any function 
$f \in L^1\big(\RR^d\big)$ as
$$
\calF f(\xi) = \int_{\RR^d} f(x) e^{2\pi i \sprod{\xi}{x}} {\: \rm d}x.
$$
If $f \in \ell^1\big(\ZZ^d\big)$ we set
$$
\hat{f}(\xi) = \sum_{x \in \ZZ^d} f(x) e^{2\pi i \sprod{\xi}{x}}.
$$
To simplify the notation we denote by $\mathcal F^{-1}$ the inverse Fourier transform on $\RR^d$
or the inverse Fourier transform on $\TT^d$ (Fourier coefficients), depending on the context.

Let $\eta: \RR^d \rightarrow \RR$ be a smooth function such that $0 \leq \eta(x) \leq 1$ and
$$
\eta(x) =
\begin{cases}
	1 & \text{ for } \norm{x} \leq 1/(16 d),\\
	0 & \text{ for } \norm{x} \geq 1/(8 d).
\end{cases}
$$
\begin{remark}
We may additionally assume that $\eta$ is a convolution of two non-negative smooth functions
$\phi$ and $\psi$ with compact supports contained inside $(-1/(8d), 1/(8d))^d$.   
\end{remark}

Let $\big(\seq{\Theta_N}{N \in \NN}\big)$ be a sequence of multipliers on $\RR^d$
with a property that for each $p \in (1, \infty)$ and $r\in(2,
\infty)$ there is a constant $\boldB_{p, r} > 0$
such that for any $f \in
L^p\big(\RR^d\big) \cap L^2\big(\RR^d\big)$
\begin{equation}
\label{eq:200}
	\big\lVert
	 V_r\big(\seq{\calF^{-1}\big(\Theta_N \calF f \big)}{N \in
          \NN}\big)
	\big\rVert_{L^p}
	\leq
	\boldB_{p, r}
\|f\|_{L^p}.
\end{equation}
Moreover, $\boldB_{p, r}\le\boldB_{p}\frac{r}{r-2}$ for some $\boldB_{p}>0$.
In fact we will be only interested in the multipliers which are
discussed in the Appendix, see Theorem \ref{thm:20} and Theorem \ref{thm:21}.

We assume that
$\mathcal R$
is a diagonal $d\times d$  matrix with  positive entries
$(r_{\gamma}: \gamma\in\Gamma)$ such that
$\inf_{\gamma\in\Gamma}r_{\gamma}\ge h$ for some $h>0$. In \cite{mst1}
we proved, in particular, the following version of the transference principle.

\begin{proposition}
	\label{prop:10}
    Under assumption \eqref{eq:200} for each $p\in (1, \infty)$ and
    $r\in(2, \infty)$ there is a constant $C > 0$ such that for each
	$Q\in\NN$ and $h \geq 2 Q^{d+1}$ and any $m \in \NN_Q^k$ we have
	\begin{align*}
		\big\lVert
		\mathcal V_r
		\big(\seq{\calF^{-1}\big(\Theta_N\eta(\mathcal R \: \cdot \:) 
		\hat{f}\big)(Q x + m)}{N \in \NN}\big)
		\big\rVert_{\ell^p(x)}
		\leq
		C \boldB_{p, r}
		\big\lVert
		\calF^{-1}\big(\eta(\mathcal R \: \cdot \:) \hat{f} \big)(Q x + m)
		\big\rVert_{\ell^p(x)}.
	\end{align*}
\end{proposition}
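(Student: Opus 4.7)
The plan is to deduce the discrete inequality from the continuous variational bound \eqref{eq:200} via a sampling principle that exploits the gap between the support of $\eta(\mathcal R\cdot)$ and the sampling scale $Q$. Writing $g_N=\calF^{-1}(\Theta_N\eta(\mathcal R\cdot)\hat f)$ and $g=\calF^{-1}(\eta(\mathcal R\cdot)\hat f)$ as functions on $\RR^d$ with $\hat f$ interpreted as the $\ZZ^d$-periodic Fourier series of $f$, the key geometric observation is that since $\eta$ is supported in $(-1/(8d), 1/(8d))^d$ and every diagonal entry of $\mathcal R$ is at least $h\ge 2Q^{d+1}$, the Fourier supports of $g_N$ and $g$ lie inside a disjoint union $\bigcup_{k\in\ZZ^d}\big(k+B(0, 1/(8dh))\big)$ of tiny balls of radius much smaller than $1/Q$.

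The heart of the argument is a vector-valued sampling principle: for any finite family $(u_j)_{j=1}^J$ of functions on $\RR^d$ with each $\calF u_j$ supported in $\bigcup_{k\in\ZZ^d}\big(k+B(0, 1/(8dh))\big)$, one has
\begin{equation*}
\Big\|\Big(\sum_j |u_j(Qx+m)|^r\Big)^{1/r}\Big\|_{\ell^p(x)} \approx Q^{-d/p}\Big\|\Big(\sum_j |u_j|^r\Big)^{1/r}\Big\|_{L^p(\RR^d)},
\end{equation*}
uniformly in $J$, $m$, $Q$, and $h$. I would prove this principle using the factorization $\eta=\phi*\psi$ from the Remark, which yields $\eta(\mathcal R\cdot)=|\det\mathcal R|\,\phi(\mathcal R\cdot)*\psi(\mathcal R\cdot)$. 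One factor supplies a reconstruction kernel matched to the lattice $Q\ZZ^d+m$, and the other ensures that pieces of Fourier support near distinct integers interact negligibly in the $L^p$ sense via a Nikolskii / Plancherel--P\'olya argument; the vector-valued extension follows from Minkowski's inequality in $\ell^r$.

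Once the sampling principle is established the proof concludes quickly. For any finite increasing sequence $N_0<N_1<\cdots<N_J$, apply it to $u_j=g_{N_j}-g_{N_{j-1}}$ to obtain
\begin{equation*}
\Big\|\Big(\sum_{j=1}^J |g_{N_j}(Qx+m)-g_{N_{j-1}}(Qx+m)|^r\Big)^{1/r}\Big\|_{\ell^p(x)} \lesssim Q^{-d/p}\big\|V_r(g_N:N\in\NN)\big\|_{L^p(\RR^d)}.
\end{equation*}
Taking the supremum over such sequences recovers $\mathcal V_r$ on the left, and the continuous bound \eqref{eq:200} estimates the right by $Q^{-d/p}\boldB_{p,r}\|g\|_{L^p(\RR^d)}$. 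A final application of the sampling principle with $J=1$ and $r=1$ gives $\|g\|_{L^p(\RR^d)}\approx Q^{d/p}\|g(Qx+m)\|_{\ell^p(x)}$, and the $Q^{\pm d/p}$ factors cancel, yielding the desired bound.

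The main obstacle is the vector-valued sampling principle, specifically the direction $\|u\|_{L^p}\lesssim Q^{d/p}\|u(Q\cdot+m)\|_{\ell^p}$ for $p\neq 2$: one must show that sparse sampling at spacing $Q$ is essentially aliasing-free despite the Fourier support covering every integer, which relies on the quantitative separation $h\ge 2Q^{d+1}$ together with a careful $L^p$ reconstruction via band-limited interpolation.
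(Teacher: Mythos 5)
Your overall plan — deduce the discrete estimate from the continuous bound \eqref{eq:200} by exploiting that the multiplier is supported in a tiny neighbourhood of the origin and then using a sampling/transference argument of Magyar--Stein--Wainger type — is exactly what the paper does (the proof is deferred to \cite{mst1}, which in turn points to \cite[Prop.\ 2.1, Cor.\ 2.5]{MSW}). You also correctly single out the Plancherel--P\'olya equivalence $\|u(Q\cdot+m)\|_{\ell^p}\approx Q^{-d/p}\|u\|_{L^p}$ for band-limited $u$ as the engine of the argument. However, two steps as you have written them do not close, and together they constitute a genuine gap.

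First, your sampling principle is stated for a \emph{fixed} finite family $(u_j)_{j\le J}$, and you then ``take the supremum over such sequences.'' That supremum lands on the \emph{outside} of the $\ell^p(x)$ norm, giving $\sup_{\mathrm{seq}}\|(\sum_j|\cdot|^r)^{1/r}\|_{\ell^p(x)}$, which is $\le$, not $\ge$, the quantity $\|\mathcal V_r(g_N(Qx+m):N)\|_{\ell^p(x)}$ you need on the left. The inequality goes the wrong way, so the claimed reduction does not recover the variational norm. The fix is to prove the sampling inequality directly in the $V_r$-Banach-valued setting, i.e.\ $\|V_r(G_N(Q\cdot+m):N)\|_{\ell^p}\lesssim Q^{-d/p}\|V_r(G_N:N)\|_{L^p}$, where $G_N$ is the band-limited continuous extension satisfying $G_N|_{\ZZ^d}=g_N$; this makes the right-hand side sequence-independent pointwise, so the supremum can be taken under the norm before integrating. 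Second, ``the vector-valued extension follows from Minkowski's inequality in $\ell^r$'' is not sufficient as stated: applying Minkowski directly gives a bound like $(\sum_j\|u_j(Q\cdot+m)\|_{\ell^p}^r)^{1/r}$, not the square-function norm you need. The correct route is to first invoke a reproducing identity $G_N=G_N*K$ for a fixed Schwartz kernel $K$ with $\widehat K\equiv 1$ on the (single, tiny) Fourier support; then $|G_{N_j}(Qx+m)-G_{N_{j-1}}(Qx+m)|\le\int|G_{N_j}(y)-G_{N_{j-1}}(y)|\,|K(Qx+m-y)|\,\mathrm{d}y$, and the triangle inequality in the $V_r$-Banach space (Minkowski's \emph{integral} inequality) gives $V_r(G_N(Qx+m):N)\le\int V_r(G_N(y):N)|K(Qx+m-y)|\,\mathrm{d}y$ pointwise in $x$. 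The $\ell^p$ bound then follows from $\sum_x|K(Qx+m-y)|\lesssim Q^{-d}$, which uses $h\ge 2Q^{d+1}$ and the factorization $\eta=\phi*\psi$ to manufacture a suitably smooth $K$. Without the reproducing-kernel step both the vector-valued extension and the supremum interchange are unjustified. Finally, the ``disjoint union of tiny balls around every integer'' description of the Fourier support is more confusing than helpful here: the band-limited extensions $G_N$ have Fourier support in a \emph{single} ball of radius $\le 1/(8dh)\ll 1/Q$, and it is this fact that makes sampling at spacing $Q$ aliasing-free; with the union-of-balls picture one would still have to rule out aliasing between distinct integer frequencies, which your ``interact negligibly'' remark does not substantiate.
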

See also the discussion of sampling in \cite[Proposition 2.1
and Corollary 2.5]{MSW}.

\subsection{Ionescu--Wainger type multipliers}

We now introduce necessary notation to define Ionescu and Wainger type
multipliers.  To
fix notation set $\rho>0$ and  for every $N\in\NN$, let us define $N_0=\lfloor
N^{\rho/2}\rfloor+1$, moreover let $Q_0=(N_0!)^D$ where
$D=D_{\rho}=\lfloor 2/\rho\rfloor+1$. Let $\mathbb P$ denote the set
of all prime numbers and $\mathbb P_N=\mathbb P\cap (N_0, N]$. For any
$k\in\NN_D$ and $V\subseteq\mathbb P_N$ we define
\[
\Pi_k(V)=\big\{p_{1}^{\gamma_{1}}\cdot\ldots\cdot
p_{k}^{\gamma_{k}}: \ \gamma_{l}\in\NN_D\  \text{and}\ p_{l}\in
V\ \text{are distinct for all $1\le l\le k$} \big\}.
\]
Therefore,  $\Pi_{k_1}(V)\cap \Pi_{k_2}(V)=\emptyset$ if $k_1\not=k_2$ and
\[
\Pi(V)=\bigcup_{k\in\NN_D}\Pi_k(V)
\]
is the set of all products of primes factors from $V$ of
length at most $D$, at powers between $1$ and $D$. 

It is easy to see, now, that every integer $q\in\NN_N$ can be uniquely
written as $q=Q\cdot w$ where $Q|Q_0$ and $w\in \Pi(\mathbb
P_N)\cup\{1\}$.  Moreover, for sufficiently large $N\in\NN$
\[q=Q\cdot w\le Q_0\cdot w\le (N_0!)^DN^{D^2}\le
e^{N^{\rho}}
\]
thus for the set 
\[
 P_N=\big\{q=Q\cdot w: Q|Q_0\ \text{and}\ w\in
\Pi(\mathbb P_{N})\cup\{1\}\big\}
\]
we have 
$\NN_N\subseteq P_N\subseteq\NN_{e^{N^{\rho}}}$. Moreover, if $N_1\le
N_2$ then $P_{N_1}\subseteq P_{N_2}$. For any $S\subseteq \NN$ define
\[
\mathcal R(S)=\{a/q\in\mathbb Q^d\cap\TT^d: a\in A_q \text{ and } q\in
S\}.
\]
We will assume that
for every $p\in(1, \infty)$ there is a constant $\boldA_p >0 $ 
\begin{align}
  \label{eq:155}
  \big\lVert\calF^{-1}\big(\Theta\calF f\big)\big\rVert_{L^p}
  \leq
  \boldA_p \vnorm{f}_{L^p}.
\end{align}
For each $N \in \NN$
we define new periodic  multipliers
\[
	\Delta_N(\xi)
	=\sum_{a/q \in\mathscr{U}_N}
	 \Theta(\xi - a/q) \eta_N(\xi - a/q)
\]
where $\eta_N(\xi)=\eta\big(\mathcal E_N^{-1}\xi\big)$ and $\mathcal E_N$
is a diagonal $d\times d$  matrix with positive entries
$\big(\varepsilon_{\gamma}: \gamma\in\Gamma\big)$ such that
$\varepsilon_{\gamma}\le e^{-N^{2\rho}}$  and
\begin{align}
  \label{eq:156}
\mathscr{U}_N=\mathcal R( P_N)
\end{align}
Furthermore, if $N_1\le N_2$ then $\mathscr{U}_{N_1}\subseteq \mathscr{U}_{N_2}$. 
The main result of this subsection is the following.
\begin{theorem}
	\label{th:3}
	Let $\Theta$ be a  multiplier on $\RR^d$ obeying \eqref{eq:155}. 
	Then for every $\rho>0$ and $p\in(1, \infty)$ there is a
        constant $C_{\rho, p} > 0$ such that
	for any $N\in\NN$ and $f \in \ell^p\big(\ZZ^d\big)$
	\[
		\big\lVert
		\calF^{-1}\big(\Delta_{N}
		\hat{f}\big)\big\rVert_{\ell^p} 
		\leq C_{\rho, p} \boldA_p(\log N)
		\vnorm{f}_{\ell^p}.
	\]
\end{theorem}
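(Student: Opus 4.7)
The plan is to follow the Ionescu--Wainger strategy refined as in \cite{iw, mst1}, exploiting the multiplicative decomposition $q = Q \cdot w$ with $Q \mid Q_0$ and $w \in \Pi(\mathbb{P}_N) \cup \{1\}$ that is available to every $q \in P_N$. Correspondingly I would split $\mathscr{U}_N$ according to this factorization and treat each piece by a combination of sampling and orthogonality.

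First I would dispatch $p = 2$ with no logarithmic loss. Since $P_N \subseteq \NN_{e^{N^{\rho}}}$, any two distinct rationals in $\mathscr{U}_N$ are separated by at least $e^{-2 N^{\rho}}$, while each cutoff $\eta_N(\xi - a/q)$ is supported in a set of diameter $\lesssim e^{-N^{2\rho}}$. The supports are therefore pairwise disjoint, so Plancherel combined with $\vnorm{\Theta}_{L^{\infty}} \lesssim \boldA_2$ yields
\[
\vnorm{\calF^{-1}(\Delta_{N} \hat{f})}_{\ell^2} \lesssim \boldA_2 \vnorm{f}_{\ell^2}
\]
without any factor of $\log N$.

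For $p \neq 2$ I would combine transference with a Rademacher--Menshov type square function. For each fixed $Q \mid Q_0$ the partial multiplier supported near rationals with denominator a multiple of $Q$ is, after an affine change of frequency, periodic modulo the lattice $(1/Q) \cdot \ZZ^{d}$, so a sampling principle in the spirit of Proposition \ref{prop:10} and \cite{MSW} transfers its $\ell^p(\ZZ^d)$ norm to the $L^p(\RR^d)$ norm of a continuous multiplier built from $\Theta$. Hypothesis \eqref{eq:155} then controls each such piece by $C \boldA_p$. To assemble the pieces I would dyadically decompose $\mathbb{P}_N = \bigsqcup_{s} V_s$ with $V_s = \mathbb{P}_N \cap [2^s, 2^{s+1})$, so that there are $O(\log N)$ blocks, and group the sum over $w \in \Pi(\mathbb{P}_N) \cup \{1\}$ by the dyadic profile of its prime factors. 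An orthogonality argument based on the disjoint supports of the cutoffs, executed via a Rademacher--Menshov type inequality applied to a vector-valued square function, then converts what would otherwise be a $(\log N)^D$ loss into a single $\log N$ loss after interpolation with the $\ell^2$ bound above.

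The main obstacle is precisely this final compression from $(\log N)^{D}$ to $\log N$. The naive iteration over $D = \lfloor 2/\rho \rfloor + 1$ nested levels of prime factorization yields the weaker polynomial bound, and sharpening it requires combining the sampling step with a vector-valued Rademacher--Menshov square function and then interpolating with the sharp $\ell^2$ estimate from the first step. Once this is organized, the parameter $\rho > 0$ enters only through the implicit constant $C_{\rho, p}$, which absorbs the dependence on $D$ and on the fixed factor $Q_0$.
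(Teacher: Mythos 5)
The paper itself contains no proof of Theorem \ref{th:3}; it cites \cite{mst1} and explicitly describes the refinement of Ionescu and Wainger's $(\log N)^D$ loss to a single logarithm as a ``deep result which uses the most sophisticated tools developed to date in this area.'' So there is nothing here to compare line by line, but your sketch can still be assessed on its own terms. Your $\ell^2$ estimate via pairwise disjoint supports of the cutoffs is correct, and the structural ingredients you identify --- the factorization $q = Q \cdot w$ with $Q \mid Q_0$ and $w \in \Pi(\mathbb P_N) \cup \{1\}$, a dyadic decomposition of $\mathbb P_N$, a sampling transfer to $L^p\big(\RR^d\big)$, and a square-function argument --- are the right shape.

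There are, however, two genuine gaps. First, the sampling step is not correct as stated: for a fixed $Q \mid Q_0$, the relevant rationals have denominator $Q w$ with $w$ ranging over all of $\Pi(\mathbb P_N) \cup \{1\}$, so no affine change of frequency makes the corresponding partial multiplier $(1/Q)\ZZ^d$-periodic; its true period is governed by the least common multiple of the $Qw$'s, which is of size roughly $e^{N^{\rho}}$ and of no use for a direct transfer. The contribution of the $w$-factor has to be handled by a genuinely different mechanism (an orthogonality and square-function argument organized along the prime factors of $w$), not by the same sampling device that handles the $Q$-factor. Second, and more importantly, the closing interpolation step cannot produce what you claim: interpolating a $(\log N)^D$ bound on $\ell^{p_0}$ with the sharp $O(1)$ bound on $\ell^2$ gives, by Riesz--Thorin, $(\log N)^{D(1-\theta)}$ on the intermediate spaces, and the exponent $D(1-\theta)$ tends to $D$, not to $1$, as $p \to p_0$. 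No interpolation argument can uniformly compress a polylogarithmic loss into a single logarithm across the whole range $p \in (1, \infty)$. The improvement to $\log N$ must come from avoiding the iteration over the $D$ nested levels of prime factorization in the first place, via a more careful combinatorial organization of the sum over $w$; it cannot be obtained by repairing the iterated $(\log N)^D$ bound after the fact.
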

Theorem \ref{th:3} inspired by the ideas of  Ionescu and Wainger from
\cite{iw} was proven in \cite{mst1}. 

\section{Long variation estimates for averaging operators}
\label{sec:4}

For any function $f: \ZZ^d \rightarrow \CC$ with a finite support we have
$$
M_N f(x) = K_N * f(x)
$$
where $K_N$ is a kernel defined by
\[
	K_N(x) = |\BB_N|^{-1} \sum_{y \in \BB_N} \delta_{\calQ(y)}
\]
and $\delta_y$ denotes Dirac's delta at $y \in \ZZ^k$. Let $m_N$ denote the discrete  Fourier transform of $K_N$, i.e.
$$
m_N(\xi) = |\BB_N|^{-1} \sum_{y \in \BB_N} e^{2\pi i \sprod{\xi}{\calQ(y)}}.
$$
Finally, we define
$$
\Phi_N(\xi) = |B_1|^{-1}\int_{B_1} e^{2\pi i \sprod{\xi}{\calQ(N y)}} {\: \rm d}y.
$$
Using a multi-dimensional version of van der Corput lemma (see \cite{bigs, ccw}) we may
estimate
\begin{equation}
	\label{eq:9}
	\abs{\Phi_N(\xi)}
	\lesssim
	\min\big\{1, \norm{N^A \xi}_\infty^{-1/d} \big\}.
\end{equation}
Additionally, we have
\begin{equation}
	\label{eq:10}
	\abs{\Phi_N(\xi) - 1}
	\lesssim
	\min\big\{1, \norm{N^A \xi}_\infty\big\}.
\end{equation}

\begin{proposition}
  \label{prop:0}

 There is a constant $C>0$ such
  that for every $N\in\NN$ and for every $\xi\in [-1/2, 1/2)^d$ satisfying 
        $$
	\Big\lvert \xi_\gamma - \frac{a_\gamma}{q} \Big\rvert \leq
        L_1^{-|\gamma|}L_2
	$$
	for all $\gamma \in \Gamma$, where  $1\le q\le L_3\le N^{1/2}$, $a\in
        A_q$, $L_1\ge N$  and $L_2\ge1$ we have 
        \begin{align}
           \label{eq:41}
          \big|m_N(\xi)-G(a/q)\Phi_{N}(\xi-a/q)\big|\le
          C\Big(L_3N^{-1}+L_2L_3N^{-1}\sum_{\gamma \in
            \Gamma}\big(N/L_1\big)^{|\gamma|}\Big)\lesssim L_2L_3N^{-1}.
        \end{align}

\end{proposition}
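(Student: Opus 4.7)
Write $\xi = a/q + \theta$, so that by hypothesis $|\theta_\gamma|\le L_1^{-|\gamma|}L_2$ for every $\gamma\in\Gamma$. The plan is to split the summation in $m_N(\xi)$ into residue classes modulo $q$, which factors out a Gaussian sum, and then to compare the remaining inner sum with the continuous integral defining $\Phi_N$ via a Riemann sum approximation.

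For each $y\in\BB_N$, write $y = qu + r$ with $r\in\NN_q^k$ and $u\in\ZZ^k$. Expanding $(qu+r)^\gamma$ coordinate-wise by the binomial theorem gives $(qu+r)^\gamma\equiv r^\gamma\pmod{q}$, hence $e^{2\pi i \sprod{(a/q)}{\calQ(qu+r)}} = e^{2\pi i \sprod{(a/q)}{\calQ(r)}}$. Setting $F(y) = e^{2\pi i \sprod{\theta}{\calQ(y)}}$ we obtain
\[
m_N(\xi) = |\BB_N|^{-1}\sum_{r\in\NN_q^k} e^{2\pi i \sprod{(a/q)}{\calQ(r)}}\sum_{\substack{u\in\ZZ^k \\ qu+r\in\BB_N}} F(qu+r).
\]

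Next I would approximate the inner sum by $q^{-k}\int_{B_N} F(y)\, dy$ via a classical Riemann sum estimate. On each cube $qu+r+[0,q)^k$ we have $|F(y)-F(qu+r)|\le Cq\sup_{B_N}|\nabla F|$, so summing over interior cubes and accounting for the $O(N^{k-1}/q^{k-1})$ cubes meeting $\partial B_N$ (whose combined volume is $O(qN^{k-1})$) yields
\[
\Bigl| q^k \!\!\sum_{u:qu+r\in\BB_N}\!\! F(qu+r) - \int_{B_N} F(y)\, dy \Bigr|\lesssim q N^k \sup_{B_N}|\nabla F| + qN^{k-1}.
\]
Direct differentiation of $F$ together with the assumed bound on $\theta$ gives $\sup_{B_N}|\nabla F|\lesssim \sum_{\gamma\in\Gamma} |\theta_\gamma| N^{|\gamma|-1}\lesssim L_2 N^{-1}\sum_{\gamma\in\Gamma}(N/L_1)^{|\gamma|}$; the condition $q\le L_3\le N^{1/2}$ is used here to ensure both error terms are genuinely of lower order than the leading $N^k$.

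Summing over $r\in\NN_q^k$ the phase factors collapse to $\sum_r e^{2\pi i \sprod{(a/q)}{\calQ(r)}} = q^k G(a/q)$. Dividing by $|\BB_N|$, invoking $|\BB_N| = |B_1|N^k(1+O(1/N))$, and changing variables $y=Nz$ to identify $|B_N|^{-1}\int_{B_N} F(y)\, dy$ with $\Phi_N(\theta)$, the plan produces
\[
m_N(\xi) = G(a/q)\Phi_N(\theta) + O\Bigl( L_3 N^{-1} + L_2 L_3 N^{-1}\sum_{\gamma\in\Gamma}(N/L_1)^{|\gamma|}\Bigr),
\]
which is the first inequality in \eqref{eq:41}; the consolidated bound $\lesssim L_2 L_3/N$ follows because $N/L_1\le 1$, $L_2\ge 1$, and $\Gamma$ is finite. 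The only delicate point is the bookkeeping of the boundary error in the Riemann sum and the accumulation of the factor $q^k$ coming from summing over residues — everything else is the binomial identity and a derivative estimate.
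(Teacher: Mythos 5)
Your argument is correct and follows the same route as the paper's proof: split the sum over $\BB_N$ into residue classes modulo $q$, use $(qu+r)^\gamma\equiv r^\gamma\pmod{q}$ to factor out the Gaussian sum, and then compare the remaining arithmetic sum with the continuous integral up to boundary and oscillation errors, finally invoking $|\BB_N|=|B_1|N^k+\mathcal O(N^{k-1})$. The only (cosmetic) difference is that the paper reaches the integral in two sub-steps — first replacing $\calQ(qy+r)$ by $\calQ(qy)$ and the constraint $|qy+r|\le N$ by $|qy|\le N$ so that the inner sum becomes $r$-independent, and only then applying a mean-value comparison with $\int_{|qt|\le N}$ — whereas you compare $q^k\sum_{u:qu+r\in\BB_N}F(qu+r)$ to $\int_{B_N}F$ in a single Riemann-sum step (the integral being $r$-independent automatically). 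The two bookkeepings produce identical error terms $\mathcal O\bigl(L_3N^{-1}+L_2L_3N^{-1}\sum_\gamma(N/L_1)^{|\gamma|}\bigr)$, and your use of $q\le L_3\le N^{1/2}$ to control the gradient and boundary contributions matches the paper's.
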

\begin{proof}
	Let $\theta = \xi - a/q$. For any $r \in \NN_q^k$, if $y \equiv r \pmod q$ then
	for each $\gamma \in \Gamma$
	$$
	\xi_\gamma y^\gamma \equiv \theta_\gamma y^\gamma
	+ (a_\gamma/q) r^\gamma \pmod 1,
	$$
	thus
	$$
	\sprod{\xi}{\calQ(y)} \equiv \sprod{\theta}{\calQ(y)} + \sprod{(a/q)}{\calQ(r)} \pmod 1.
	$$
	Therefore,
	\[
	|\BB_N|^{-1}\sum_{y \in \BB_N} e^{2\pi i \sprod{\xi}{\calQ(y)}} 
	=
	q^{-k}\sum_{r \in \NN_q^k}
	e^{2\pi i \sprod{(a/q)}{\calQ(r)}}
	\cdot \Big(q^k|\BB_N|^{-1}\sum_{\atop{y \in
            \ZZ^k}{|qy+r|\le N}}
	e^{2\pi i \sprod{\theta}{\calQ(qy+r)}}\Big).
	\]
	If $ \norm{q y + r}, \norm{qy} \leq N$ then
	\[
	\big\lvert
	\sprod{\theta}{\calQ(q y + r)} - \sprod{\theta}{\calQ(q y)}
	\big\rvert
	\lesssim
	\norm{r}
	\sum_{\gamma \in \Gamma}
	\abs{\theta_\gamma}
	\cdot
	N^{(\abs{\gamma} - 1)}
	\lesssim
	q \sum_{\gamma \in \Gamma}
	L_1^{-\abs{\gamma}}L_2 N^{(\abs{\gamma}-1)}
	\lesssim
	L_2L_3/N\sum_{\gamma \in \Gamma}\big(N/L_1\big)^{\abs{\gamma}}.
	\]
	Thus
        \[
        |\BB_N|^{-1}\sum_{y \in \BB_N} e^{2\pi i \sprod{\xi}{\calQ(y)}} 
	=G(a/q)\cdot q^k|\BB_N|^{-1}\sum_{\atop{y \in \ZZ^k}{|qy|\le N}}
	e^{2\pi i \sprod{\theta}{\calQ(qy)}}+\mathcal O\Big(q/N+L_2L_3/N\sum_{\gamma \in \Gamma}\big(N/L_1\big)^{|\gamma|}\Big).
        \] 
We have used the formula for the number of lattice points in the
Euclidean ball, i.e. $|\BB_N|=|B_1|N^k+\mathcal
O\big(N^{k-1}\big)$ as $N\to\infty$.
Now we are going to replace the  exponential sum on the right-hand
side of the last display by the integral. By the mean value theorem,  we  obtain
  \begin{multline*}
    \Big|\sum_{\atop{y \in \ZZ^k}{
        |qy|\le  N}}
    e^{2\pi i \sprod{\theta}{\calQ(qy)}}-\int_{ |qt|\le
      N}e^{2\pi i \theta\cdot\calQ(qt)}{\: \rm d}t\Big|\\
    = \Big|\sum_{y \in \ZZ^k} e^{2\pi i
      \sprod{\theta}{\calQ(qy)}}\ind{B_N}(qy)-\sum_{y \in \ZZ^k}
\int_{y+(0, 1]^k}e^{2\pi i \theta\cdot\calQ(qt)}\ind{B_N}(qt){\: \rm d}t\Big|\\
=\Big|\sum_{y \in \ZZ^k} \int_{(0, 1]^k}e^{2\pi i
      \theta\cdot\calQ(qy)}\ind{B_N}(qy)-e^{2\pi i
      \theta\cdot\calQ(q(t+y))}\ind{B_N}(q(t+y)){\:\rm d}t\Big|\\
=\mathcal O\Big((N/q)^{k-1}+(N/q)^{k}L_2L_3/N\sum_{\gamma \in \Gamma}\big(N/L_1\big)^{|\gamma|}\Big).
  \end{multline*}
This completes the proof of Proposition \ref{prop:0}.
\end{proof}

In Remark \ref{rem:10} we mentioned that Theorem \ref{thm:1} holds
with the operators $M_N$ defined with the sets $\mathbb G_N$ instead
of $\mathbb B_N$. Then we obtain analogous definitions of $K_N$, $m_N$
and $\Phi_N$ with the sets $\mathbb G_N$ and $G=G_1$ in place of the sets
$\mathbb B_N$ and $B_1$ respectively. All of the arguments remain
unchanged apart from the proof of Proposition \ref{prop:0}. Here we
must proceed more delicately.  However, \cite[Proposition 3.1]{mst1}
used with the sets $\mathbb G_N$ in place of the asymptotic formula
for the number of lattice points in $\mathbb B_N$ does the job and we
obtain conclusion of the same type. Proposition 3.1 from \cite{mst1}
states that for a given convex set $\Omega \subseteq\RR^k$ such that
$B_{cr}(x_0')\subseteq \Omega\subseteq B_r(x_0)$ for some $x_0, x_0'\in\RR^k$
and $c>0$, we have that for any $1\le s\le r$ the number of lattice points
$N_{\Omega}$ in $\Omega$ of distance $< s$  from the boundary of
$\Omega$ is $\mathcal O\big(sr^{k-1}\big)$.

The main result of this section is the following.
\begin{theorem}
\label{thm:5}
	For every $1 < p < \infty$  and $r\in(2, \infty)$ there is $C_{p} > 0$ such that for all
	$f \in \ell^p\big(\ZZ^{d_0}\big)$
        \begin{align}
          \label{eq:4}
          	\big\lVert
	V_r\big( M_{2^n} f: n\in\NN\big)
	\big\rVert_{\ell^p}\le
	C_{p}\frac{r}{r-2}\|f\|_{\ell^p}.
        \end{align}
\end{theorem}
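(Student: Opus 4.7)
The plan is to follow the major/minor arc decomposition outlined in the introduction. Fix a large integer $l \in \NN$ and the projections
\[
\Xi_n(\xi) = \sum_{a/q \in \mathscr{U}_{n^l}} \eta\bigl(\mathcal E_n^{-1}(\xi - a/q)\bigr),
\]
with $\mathcal E_n$ a diagonal matrix with entries bounded by $e^{-n^{1/5}}$ (so the bumps are disjoint). Since $V_r \le V_r(\calF^{-1}(m_{2^n}\Xi_n\hat f)) + V_r(\calF^{-1}(m_{2^n}(1-\Xi_n)\hat f))$ pointwise, the problem splits into a \emph{minor arcs} piece and a \emph{major arcs} piece. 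For the minor arcs, I would use \eqref{eq:11} to pass from $V_r$ (for $r \ge 2$) to an $\ell^2(n)$-square function, and then on each such dyadic level apply the multi-dimensional Weyl inequality (Theorem \ref{thm:3}) to extract a factor $(\log 2^n)^{-\alpha} = n^{-\alpha}$ from $m_{2^n}(1-\Xi_n)$ (the Dirichlet principle ensures that, off the major arcs, $m_{2^n}$ obeys the hypotheses of Theorem \ref{thm:3}); combined with the $C\log(n+2)$ bound on the projection operator $\calF^{-1}(\Xi_n\hat f)$ (a direct application of Theorem \ref{th:3}), the resulting sum in $n$ converges once $\alpha$ is chosen sufficiently large.

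For the major arcs, I would use Proposition \ref{prop:0} together with the fact that $|\calE_n|$ is tiny to show that on the support of $\Xi_n$ one has the asymptotic formula $m_{2^n}(\xi)\Xi_n(\xi) \simeq \sum_{s\ge 0} m_{2^n}^s(\xi)$ with $m_{2^n}^s$ as in \eqref{eq:203}; the error terms are summable in $n$ and can be absorbed into $V_r$ via \eqref{eq:11}. Matters then reduce to proving \eqref{eq:204}, i.e.
\[
\bigl\|V_r\bigl(\calF^{-1}(m_{2^n}^s\Xi_n\hat f): n\in\NN_0\bigr)\bigr\|_{\ell^p} \le C_p(s+1)^{-2}\|f\|_{\ell^p},
\]
for each fixed $s$. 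I would establish this separately on $\ell^2$ (with favorable decay $(s+1)^{-\delta l +1}$, where $l$ is at our disposal, using Plancherel and the Gaussian sum bound $|G(a/q)|\lesssim q^{-\delta}$), and on a fixed $\ell^{p_0}$ for $p_0\ne 2$ (with a much weaker, at most polynomial, bound $s\log(s+2)$). Marcinkiewicz interpolation then gives $(s+1)^{-2}$ once $l$ is taken large enough depending on $p$, and summation in $s$ produces the theorem.

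To obtain the $\ell^p$ bound for a fixed $s$, I would split into \emph{small scales} $n\le 2^{\kappa_s}$ (with $\kappa_s\simeq s$) and \emph{large scales} $n>2^{\kappa_s}$, and use subadditivity \eqref{eq:19} together with $\sup_n \lesssim V_r$ to treat the interface term. For small scales, the numerical inequality of Lemma \ref{lem:6} (applied as in \eqref{eq:205}) bounds the $V_r$ by a sum of $\kappa_s+1 \lesssim s$ square functions of increments $\calF^{-1}(m_{2^{(j+1)2^i}}^s\hat f) - \calF^{-1}(m_{2^{j2^i}}^s\hat f)$; each of these is a projection of Ionescu--Wainger type whose $\ell^p$-norm is controlled by $C\log(s+2)\|f\|_{\ell^p}$ via Theorem \ref{th:3} (after using vector-valued extensions of the $\ell^p$-boundedness of the relevant multipliers, rescaled to the bump centered at $a/q$). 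For large scales $n>2^{\kappa_s}$, the projection $\eta(\calE_s^{-1}(\xi - a/q))$ has essentially disjoint support across $a/q\in\mathscr{U}_{(s+1)^l}$, so Proposition \ref{prop:10} (the transference principle, with $Q$ equal to the lcm of denominators in $\mathscr{U}_{(s+1)^l}$, controlled by $\exp((s+1)^{l\rho})$) transfers the $\ell^p$ $r$-variational norm to the $L^p(\RR^d)$ $r$-variational norm of the continuous Radon averages $\calF^{-1}(\Phi_{2^n}\calF F)$. That continuous $r$-variation is bounded via L\'epingle's inequality (Theorem \ref{thm:22}--\ref{thm:23}), and this is precisely the step that forces $r\in(2,\infty)$ and produces the growth $C_p r/(r-2)$.

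The main obstacle will be making the small-scale/large-scale split compatible with the Gaussian-sum decay needed for the $\ell^2$ bound: one must choose $\kappa_s$ so that the transference principle is actually applicable (i.e. the parameter $h$ dominates $(Q_s)^{d+1}$) while keeping the number of square-function pieces in the small-scale regime bounded by a constant multiple of $s$, so that the final product of $\log(s+2)$ factors from Theorem \ref{th:3} and the polynomial-in-$s$ losses from transference remain well below the $(s+1)^{\delta l-1}$ gain from the Gaussian sums after interpolation. Once these parameters are coordinated, summation in $s$ and combining with the minor-arc estimate completes the proof of \eqref{eq:4}.
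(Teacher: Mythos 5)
Your proposal reproduces, essentially step by step, the strategy the paper itself indicates for Theorem \ref{thm:5}: the major/minor arc split with the projections $\Xi_n$, the reduction to \eqref{eq:204} via the asymptotic formula of Proposition \ref{prop:0}, interpolation between an $\ell^2$ Gaussian-sum estimate and a crude $\ell^{p_0}$ bound, the small-scale/large-scale split at $2^{\kappa_s}$ with Lemma \ref{lem:6} plus Theorem \ref{th:3} for small scales, and the transference principle plus L\'epingle for large scales; this is exactly the outline in the introduction and the parallel argument executed for $T^{\calP}_N$ in Section \ref{sec:6}. The only slips are cosmetic and do not change the argument: the paper takes $\kappa_s\simeq s^{1/10}$ rather than $\kappa_s\simeq s$ (only $\kappa_s\lesssim s$ is needed), the period $Q_s$ is a factorial $\big(\lfloor e^{(s+1)^{1/10}}\rfloor\big)!$ rather than an lcm controlled by $\exp((s+1)^{l\rho})$, the relevant continuous input for $M_N$ is Theorem \ref{thm:20} (not Theorem \ref{thm:23}), and the interpolation invoked is Riesz--Thorin rather than Marcinkiewicz.
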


As we have indicated, a complete proof of this theorem will not be
given here. It will suffice to say that it uses Proposition
\ref{prop:0} and follows the ideas \cite[Section 6]{mst1}. If one
replaces the supremum norm occuring there by the variational norm
$V_r$, we can then obtain \eqref{eq:4}. One can also follow closely
the corresponding argument for the singular Radon transforms in
Section 6.

\section{Short variation estimates for averaging operators}
\label{sec:5}
According to \eqref{eq:32} and  the estimates for long variations from
the previous section it remains to  prove that for all $p\in(1, \infty)$ there is $C_p>0$
such that for all $f\in\ell^p\big(\ZZ^d\big)$ with finite support we have
\[ 
  \Big\|\Big(\sum_{n\ge0} V_2\big(M_{N}f: N\in[2^n,
  2^{n+1})\big)^2\Big)^{1/2}\Big\|_{\ell^p}\le C_p\|f\|_{\ell^p}.
\]
For this purpose, fix the numbers $\chi>0$ and $l\in\NN$ whose precise
values will be specified later, and let us introduce for every $n\in\NN_0$
the multipliers
\[
  \Xi_{n}(\xi)=\sum_{a/q\in\mathscr U_{n^l}}\eta\big(2^{n(A-\chi I)}(\xi-a/q)\big)
\]
with $\mathscr U_{n^l}$ defined as in \eqref{eq:156}.
Theorem \ref{th:3} guarantees  that for every $p\in(1, \infty)$ 
\begin{align}
\label{eq:28}
  \big\|\mathcal
  F^{-1}\big(\Xi_{n}\hat{f}\big)\big\|_{\ell^p}\lesssim \log(n+2)\|f\|_{\ell^p}.
\end{align}
The implicit constant in \eqref{eq:28} depends on the parameter
$\rho>0$, which was fixed, see Section \ref{sec:3}. However, from
now on we will assume that $\rho>0$ and the integer $l\ge10$ are related by the
equation
\[
  10\rho l=1.
\]
Observe that
\begin{multline}
  \label{eq:30}
   \Big\|\Big(\sum_{n\ge0} V_2\big(M_{N}f: N\in[2^n,
  2^{n+1})\big)^2\Big)^{1/2}\Big\|_{\ell^p}\\
\le 
  \Big\|\Big(\sum_{n\ge0} V_2\big((M_{N}-M_{2^n})\mathcal
  F^{-1}\big(\Xi_{n}\hat{f}\big): N\in[2^n,
  2^{n+1})\big)^2\Big)^{1/2}\Big\|_{\ell^p}\\
+ 
  \Big\|\Big(\sum_{n\ge0} V_2\big((M_{N}-M_{2^n})\mathcal
  F^{-1}\big((1-\Xi_{n})\hat{f}\big): N\in[2^n,
  2^{n+1})\big)^2\Big)^{1/2}\Big\|_{\ell^p}.
\end{multline}
\subsection{The estimate of the second norm in \eqref{eq:30}}
We may assume without of
loss of generality, that $1<
r\le \min\{2, p\}$, since $r$-variations are decreasing,  and it suffices to show that
\begin{align}
  \label{eq:38}
  \big\|V_r\big((M_{N}-M_{2^n})\mathcal
  F^{-1}\big((1-\Xi_{n})\hat{f}\big): N\in[2^n,
  2^{n+1})\big)\big\|_{\ell^p}\lesssim (n+1)^{-2}\|f\|_{\ell^p}.
\end{align}
Appealing to  \eqref{eq:39}
we immediately see that
\begin{align}
  \label{eq:50}
  \big\| V_r\big((M_{N}-M_{2^n})\mathcal
  F^{-1}\big((1-\Xi_{n})\hat{f}\big): N\in[2^n,
  2^{n+1})\big)\big\|_{\ell^p}
\lesssim\max\big\{\mathbf U_p, 2^{n/r}\mathbf U_p^{1-1/r}\mathbf V_p^{1/r}\big\}
\end{align}
where 
\begin{align*}
  \mathbf U_p=\sup_{2^n\le N\le 2^{n+1}}\big\|(M_{N}-M_{2^n})\mathcal
  F^{-1}\big((1-\Xi_{n})\hat{f}\big)\big\|_{\ell^p}
\end{align*}
and 
\begin{align*}
\mathbf V_p=  \sup_{2^n\le N< 2^{n+1}}\big\|(M_{N+1}-M_{N})\mathcal
  F^{-1}\big((1-\Xi_{n
})\hat{f}\big)\big\|_{\ell^p}.
\end{align*}
In view of \eqref{eq:28} we see that
\begin{align}
  \label{eq:58}
 \mathbf U_p\lesssim \log(n+2) \|f\|_{\ell^p} \quad \text{and} \quad
  \mathbf V_p\lesssim 2^{-n} \log(n+2) \|f\|_{\ell^p}.
\end{align}
In fact we show that there is possible a refinement of these estimates
for $p=2$, which in turn improve the estimates from \eqref{eq:58}
for all $p\in(1, \infty)$ and finally one could conclude \eqref{eq:38}.
Indeed, we claim that for big enough $\alpha>0$, which will be specified
later, and for all $n\in\NN_0$ and $N\simeq 2^n$ we have
\begin{align}
  \label{eq:60}
 \big|\big( m_{N}(\xi)-
 m_{2^n}(\xi)\big)(1-\Xi_{n}(\xi))\big|\lesssim (n+1)^{-\alpha}.
\end{align}
This estimate will be a consequence of Theorem \ref{thm:3}. To do so,
by Dirichlet's principle we have for every $\gamma\in\Gamma$ 
\[
	\bigg|
	\xi_{\gamma}-\frac{a_{\gamma}}{q_{\gamma}}
	\bigg|
	\le
	\frac{n^\beta}{q_{\gamma} 2^{n|\gamma|}}
\] 
where $1\le q_{\gamma}\le n^{-\beta}2^{n|\gamma|}$. In order to apply
Theorem \ref{thm:3} we must show that there exists some
$\gamma\in\Gamma$ such that $n^{\beta}\le q_{\gamma }\le
n^{-\beta}2^{n|\gamma|}$. Suppose for a contradiction that for every
$\gamma \in \Gamma$  we have $1\le q_{\gamma }<n^{\beta} $. Then for
some 
$q\le \mathrm{lcm}(q_{\gamma}: \gamma\in\Gamma)\le n^{\beta d}$ we have 
\[
	\bigg|
	\xi_{\gamma}- \frac{a_{\gamma}'}{q}
	\bigg|
	\le \frac{n^{\beta}}{2^{n|\gamma|}}
\] 
where $\mathrm{gcd}\big(q, \mathrm{gcd}({a_{\gamma}'}:
\gamma\in\Gamma)\big)=1$.
Hence, taking $a'=(a_{\gamma}': \gamma\in\Gamma)$ we have
$a'/q\in\mathscr U_{n^l}$ provided that $\beta d<l$.
On the other hand, if $1-\Xi_{n}(\xi)\not=0$ then for every
$a'/q\in\mathscr U_{n^l}$ there exists $\gamma\in\Gamma$ such that
\[
	\bigg|
	\xi_{\gamma}- \frac{a_{\gamma}'}{q}
	\bigg|
	> \frac{1}{16d\cdot2^{n(|\gamma|-\chi)}}.
\] 
Therefore, one obtains
\[
2^{n\chi}<16dn^{\beta}
\]
which is not possible if $n$ is sufficiently large. We have already shown that there exists some
$\gamma\in\Gamma$ such that $n^{\beta}\le q_{\gamma }\le
n^{-\beta}2^{n|\gamma|}$ and consequently Theorem \ref{thm:3} yields 
\[
|m_N(\xi)|\lesssim (n+1)^{-\alpha}
\]
provided that $1-\Xi_{n}(\xi)\not=0$ proving \eqref{eq:60}. We obtain
\begin{align}
\label{eq:56}  
\mathbf U_2\lesssim(1+n)^{-\alpha} \log(n+2) \|f\|_{\ell^2}.
\end{align}
Interpolating \eqref{eq:56} with \eqref{eq:58} we obtain 
\begin{align}
\label{eq:57}  
\mathbf U_p\lesssim(1+n)^{-c_p\alpha} \log(n+2) \|f\|_{\ell^p}.
\end{align}
for some $c_p>0$. Now, by choosing $\alpha>0$ and $l\in\NN
$ appropriately
large we see that \eqref{eq:57} combined with \eqref{eq:50} easily
imply \eqref{eq:38}. 

\subsection{The estimate of the first  norm in \eqref{eq:30}}
Note that for any $\xi\in\TT^d$ such that 
\[
	\bigg|
	\xi_{\gamma}-
	\frac{a_{\gamma}}{q}
	\bigg|
	\le 
	\frac{1}{8d\cdot 2^{n(|\gamma|-\chi)}}
\] 
for any $\gamma \in \Gamma$
with $1\le q\le e^{n^{1/10}}$ we have 
\begin{align}
\label{eq:61}
  m_N(\xi)=G(a/q)\Phi_N(\xi-a/q)+q^{-\delta}E_{2^n}(\xi)
\end{align}
for every $N \in [2^n, 2^{n+1})$,
where 
\begin{align}
\label{eq:64}
  |E_{2^n}(\xi)|\lesssim 2^{-n/2}.
\end{align}
These two properties \eqref{eq:61} and \eqref{eq:64}  follow from Proposition \ref{prop:0} with
$L_1=2^{n}$, $L_2=(8d)^{-1}2^{n\chi}$ and $L_3=e^{n^{1/10}}$, since
\[
|E_{2^n}(\xi)|\lesssim q^{\delta}L_2L_32^{-n}\lesssim
e^{-n((1-\chi)\log 2-2n^{-9/10})}
\lesssim 2^{-n/2}
\]
which holds for sufficiently large $n\in\NN$ when $\chi>0$ is
sufficiently small. Let us introduce for
every $j, n\in\NN_0$ the
new multipliers
\[
  \Xi_{n}^j(\xi)=\sum_{a/q\in\mathscr U_{n^l}}\eta\big(2^{nA+jI}(\xi-a/q)\big)
\]
and note that
\begin{multline*}
    \bigg\|
	\Big(\sum_{n\ge0}
	V_2 \big((M_{N}-M_{2^n})\mathcal F^{-1} \big(\Xi_{n}\hat{f}\big): N\in[2^n, 2^{n+1})\big)^2\Big)^{1/2}
	\bigg\|_{\ell^p} \\
	\le 
	\bigg\|\Big(\sum_{n\ge0} V_2\big((M_{N}-M_{2^n})\mathcal
  F^{-1}\Big(\sum_{-\chi n\le j< n}\big(\Xi_{n}^j-\Xi_{n}^{j+1}\big)\hat{f}\Big): N\in[2^n,
  2^{n+1})\big)^2\Big)^{1/2}\bigg\|_{\ell^p}\\
+\bigg\|\Big(\sum_{n\ge0} V_2\big((M_{N}-M_{2^n})\mathcal
  F^{-1}\big(\Xi_{n}^n\hat{f}\big): N\in[2^n,
  2^{n+1})\big)^2\Big)^{1/2}\bigg\|_{\ell^p}=I_p^1+I_p^2.
\end{multline*}
We will estimate $I_p^1$ and $I_p^2$ separately. First, observe that by \eqref{eq:61} and \eqref{eq:64},
for any $N\simeq 2^n$ and any $a/q\in\mathscr U_{n^l}$ we have
\begin{align}
\label{eq:207}
	\big|
	m_{N}(\xi)-m_{2^n}(\xi)
	\big|
	&\lesssim
	q^{-\delta} \big|\Phi_N(\xi - a/q) - \Phi_{2^n}(\xi - a/q)\big|
	+ q^{-\delta} 2^{-n/2}\\
\nonumber&\lesssim q^{-\delta}\big(\min\big\{1, |2^{nA}(\xi-a/q)|_{\infty},
|2^{nA}(\xi-a/q)|_{\infty}^{-1/d}\big\}+2^{-n/2}\big)  
\end{align}
where the last bound follows from \eqref{eq:9} and \eqref{eq:10}. 
Thus
 using  \eqref{eq:207}  we get
\begin{align}
  \label{eq:66}
 \big|\big(m_{N}(\xi)-m_{2^n}(\xi)\big)
\big(\eta\big(2^{nA+jI}(\xi-a/q)\big)-\eta\big(2^{nA+(j+1)I}(\xi-a/q)\big)\big)\big|
\lesssim
\big(2^{-|j|/d}+2^{-n/2}\big).
\end{align}
We begin with bounding $I_p^2$. Since $r$-variations are decreasing
we can assume that $1<r\le \min\{2, p\}$ and it will suffice to show, for some
$\varepsilon=\varepsilon_{p, r}>0$, that
\begin{align}
  \label{eq:102}
  \big\|V_r\big((M_{N}-M_{2^n})\mathcal
  F^{-1}\big(\Xi_{n}^{n}\hat{f}\big): N\in[2^n,
  2^{n+1})\big)\big\|_{\ell^p}\lesssim 2^{-\varepsilon
    n}\|f\|_{\ell^p}.
\end{align} 
Likewise above we shall exploit  \eqref{eq:39}
which immediately gives
\[
  \big\|V_r\big((M_{N}-M_{2^n})\mathcal
  F^{-1}\big(\Xi_{n}^{n}\hat{f}\big): N\in[2^n,
  2^{n+1})\big)\big\|_{\ell^p}
\lesssim\max\big\{\mathbf U_p, 2^{n/r}\mathbf U_p^{1-1/r}\mathbf V_p^{1/r}\big\}\
\]
where 
\begin{align*}
  \mathbf U_p=\sup_{2^n\le N\le 2^{n+1}}\big\|(M_{N}-M_{2^n})\mathcal
  F^{-1}\big(\Xi_{n}^{n}\hat{f}\big)\big\|_{\ell^p}
\end{align*}
and 
\begin{align*}
\mathbf V_p=  \sup_{2^n\le N< 2^{n+1}}\big\|(M_{N+1}-M_{N})\mathcal
  F^{-1}\big(\Xi_{n}^{n}\hat{f}\big)\big\|_{\ell^p}.
\end{align*}
In view of Theorem \ref{th:3} we see that
\begin{align}
  \label{eq:104}
 \mathbf U_p\lesssim \log(n+2)\|f\|_{\ell^p} \quad \text{and} \quad
  \mathbf V_p\lesssim 2^{-n}\log(n+2)\|f\|_{\ell^p}.
\end{align}
 For $p=2$ by Plancherel's theorem and  \eqref{eq:207} we obtain
 \begin{multline}
   \label{eq:105}
   \big\|(M_{N}-M_{2^n})\mathcal
  F^{-1}\big(\Xi_{n}^{n}\hat{f}\big)\big\|_{\ell^2}\\
=
\Big(\int_{\TT^d}\sum_{a/q\in\mathscr U_{n^l}}|m_{N}(\xi)-m_{2^n}(\xi)|^2\eta\big(2^{nA+nI}(\xi-a/q)\big)^2
  |\hat{f}(\xi)|^2 {\: \rm d} \xi\Big)^{1/2}
\lesssim 
2^{-n/(2d)}\|f\|_{\ell^2}.
 \end{multline}
Therefore, interpolating \eqref{eq:104} with \eqref{eq:105} we obtain
for every $p\in(1, \infty)$ that 
\begin{align*}
  \mathbf U_p\lesssim 2^{-\varepsilon n}\|f\|_{\ell^p}
\end{align*}
which in turn implies \eqref{eq:102} and $I_p^2\lesssim \|f\|_{\ell^p}$.

We shall now estimate $I_p^1$, for this purpose we need to define new multipliers for any $0\le s< n$
\[
  \Delta_{n, s}^j(\xi)=\sum_{a/q\in\mathscr
    U_{(s+1)^l}\setminus\mathscr
    U_{s^l}}\big(\eta\big(2^{nA+jI}(\xi-a/q)\big)-\eta\big(2^{nA+(j+1)I}(\xi-a/q)\big)\big)\eta\big(2^{s(A-\chi
  I)}(\xi-a/q)\big),
\]
It makes sense since $\mathscr U_{s^l}\subseteq \mathscr
U_{(s+1)^l}$, thus
\begin{align*}
  \Xi_{n}^j(\xi)-\Xi_{n}^{j+1}(\xi)=\sum_{0\le s<n}\Delta_{n, s}^j(\xi).
\end{align*}
Now we have  
\begin{multline*}
  I_p^1=\bigg\|\Big(\sum_{n\ge0} V_2\big((M_{N}-M_{2^n})\mathcal
  F^{-1}\Big(\sum_{-\chi n\le j< n}\sum_{0\le s<n}\Delta_{n, s}^j\hat{f}\Big): N\in[2^n,
  2^{n+1})\big)^2\Big)^{1/2}\bigg\|_{\ell^p}\\
\le\sum_{s\ge 0}\sum_{j\in\ZZ}\bigg\|\Big(\sum_{n\ge \max\{s, j, -j/\chi\}} V_2\big((M_{N}-M_{2^n})\mathcal
  F^{-1}\big(\Delta_{n, s}^j\hat{f}\big): N\in[2^n,
  2^{n+1})\big)^2\Big)^{1/2}\bigg\|_{\ell^p}.
\end{multline*}
The task now is to show that for some $\varepsilon_p>0$ 
\begin{align}
  \label{eq:73}
  J_p=\bigg\|\Big(\sum_{n\ge \max\{s, j, -j/\chi\}} V_2\big((M_{N}-M_{2^n})\mathcal
  F^{-1}\big(\Delta_{n, s}^j\hat{f}\big): N\in[2^n,
  2^{n+1})\big)^2\Big)^{1/2}\bigg\|_{\ell^p}\lesssim
  s^{-2}2^{-\varepsilon_p|j|}\|f\|_{\ell^p}.
\end{align}
 Before we establish \eqref{eq:73} we
need some prerequisites. 
\subsubsection{Some preparatory estimates} 
The next result, more precisely inequality \eqref{eq:83}, can be thought as a discrete counterpart of
Littlewood--Paley theory.
\begin{theorem}
  \label{thm:30}
For every
$p\in(1, \infty)$ there is a
constant $C>0$ such that for all $f\in\ell^p\big(\ZZ^d\big)$ we have
 \begin{align}
   \label{eq:83}
   \bigg\|\Big(\sum_{n\ge \max\{s, j, -j/\chi\}}\big|\mathcal
  F^{-1}\big(\Delta_{n,
    s}^j\hat{f}\big)\big|^2\Big)^{1/2}\bigg\|_{\ell^p}\le C \log(s+2)\|f\|_{\ell^p}.
 \end{align}
\end{theorem}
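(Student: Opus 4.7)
The plan is to prove \eqref{eq:83} by reducing it to a continuous Littlewood--Paley estimate and transferring it to the discrete setting via a vector-valued variant of Theorem \ref{th:3}. Introduce the continuous multipliers
\[
m_n^j(\xi) = \eta\big(2^{nA+jI}\xi\big) - \eta\big(2^{nA+(j+1)I}\xi\big), \qquad n, j \in \ZZ,
\]
so that each summand of $\Delta_{n,s}^j$ equals $m_n^j(\xi - a/q)$ multiplied by the outer cutoff $\eta\big(2^{s(A - \chi I)}(\xi - a/q)\big)$.

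First, I would establish the continuous square function inequality
\[
\Big\|\Big(\sum_{n \in \ZZ} \big|\calF^{-1}(m_n^j \calF f)\big|^2\Big)^{1/2}\Big\|_{L^p(\RR^d)}
\lesssim \|f\|_{L^p(\RR^d)}
\]
with implicit constant independent of $j \in \ZZ$. This is the standard Littlewood--Paley inequality for the non-isotropic dilation group $\{t^A : t > 0\}$: for any sequence of signs $\eps_n \in \{-1, 1\}$, the multiplier $\sum_n \eps_n m_n^j$ satisfies Marcinkiewicz--Mikhlin-type conditions adapted to the dilations $t^A$, uniformly in $(\eps_n)$ and in $j$, so that Rademacher averaging combined with Khintchine's inequality produces the desired $\ell^2$-square function bound.

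Second, I would transfer this inequality to the discrete setting. The hypothesis $n \ge \max\{s, j, -j/\chi\}$ ensures $n|\gamma| + j \ge s|\gamma| - s\chi$ for all $\gamma \in \Gamma$, so $\supp m_n^j(\cdot - a/q) \subseteq \supp \eta(2^{s(A - \chi I)}(\cdot - a/q))$ and the outer cutoff plays only a separating role. Moreover, since $10 \rho l = 1$, denominators in $\mathscr U_{(s+1)^l}$ are bounded by $e^{(s+1)^{1/10}}$, which is much smaller than $2^{s(1 - \chi)}$ for $s$ large, so the outer bumps centered at distinct rationals have pairwise disjoint supports. Thus $\Delta_{n,s}^j$ fits the template of Theorem \ref{th:3} with continuous multiplier $\Theta = m_n^j$ and rational set $\mathscr U_{(s+1)^l}$. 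Applying an $\ell^2$-valued variant of Theorem \ref{th:3}, obtained by running the proof from \cite{mst1} with $\ell^2$-valued functions in place of scalar ones, yields
\[
\Big\|\Big(\sum_n \big|\calF^{-1}(\Delta_{n,s}^j \hat f)\big|^2\Big)^{1/2}\Big\|_{\ell^p(\ZZ^d)}
\lesssim \log\big((s+1)^l\big)
\Big\|\Big(\sum_n \big|\calF^{-1}(m_n^j \calF f)\big|^2\Big)^{1/2}\Big\|_{L^p(\RR^d)},
\]
and combining with the continuous bound absorbs $l$ into the implicit constant and delivers \eqref{eq:83}.

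The main obstacle is justifying the $\ell^2$-valued extension of Theorem \ref{th:3}. The scalar proof in \cite{mst1} combines a sampling step with a multi-scale decomposition over prime factorizations of the denominators $q$; both ingredients extend to $\ell^2$-valued functions, but one must verify at each step that the logarithmic dependence on $s$ is preserved without picking up additional powers. Once this vector-valued extension is in place, the remainder of the argument is routine.
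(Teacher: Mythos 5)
Your approach diverges from the paper's and leaves a genuine gap: you propose to prove the continuous square function estimate first, then transfer it to the discrete setting by an $\ell^2$-valued extension of Theorem~\ref{th:3}. But Theorem~\ref{th:3} as stated is a scalar multiplier theorem, and the vector-valued extension you need is nowhere proven; you yourself flag it as the ``main obstacle.'' Re-running the Ionescu--Wainger machinery from \cite{mst1} with $\ell^2$-valued functions is a non-trivial undertaking, and simply asserting that ``both ingredients extend'' does not close the argument.

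The paper's proof sidesteps this entirely by applying Khintchine's inequality at the \emph{discrete} level first, not the continuous one. That reduces the $\ell^p\big(\ZZ^d\big)$ norm of the square function to an average over random signs $\varepsilon_n(t)$ of the $\ell^p$ norm of the single scalar multiplier
\[
\sum_{n\ge \max\{s, j, -j/\chi\}}\varepsilon_n(t)\,\Delta_{n, s}^j(\xi)
= \Big(\sum_{a/q\in\mathscr U_{(s+1)^l}}-\sum_{a/q\in\mathscr U_{s^l}}\Big)
\sum_{n}\mathfrak m_n(\xi-a/q)\,\eta\big(2^{s(A-\chi I)}(\xi-a/q)\big),
\]
where $\mathfrak m_n(\xi)=\varepsilon_n(t)\big(\eta\big(2^{nA+jI}\xi\big)-\eta\big(2^{nA+(j+1)I}\xi\big)\big)$. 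For each fixed choice of signs, this is precisely the format covered by the scalar Theorem~\ref{th:3}, once one checks that $\Theta=\sum_n \mathfrak m_n$ is an $L^p\big(\RR^d\big)$ multiplier with bound uniform in the signs (which follows from $|\mathfrak m_n(\xi)|\lesssim\min\{|2^{nA+jI}\xi|_{\infty}, |2^{nA+jI}\xi|_{\infty}^{-1}\}$ and the standard maximal/multiplier theory of \cite{bigs}). So the scalar theorem, applied pointwise in the Rademacher parameter $t$, already delivers the $\log(s+2)$ loss, and no vector-valued extension is needed. Your use of Khintchine only inside the continuous Littlewood--Paley step is the wrong place: moving it to the discrete side is exactly what collapses the problem to a form where the existing tools apply directly.
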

\begin{proof}
  
By Khinchine's inequality \eqref{eq:83} is equivalent to the following
\begin{align}
  \label{eq:95}
  \Big(\int_0^1\bigg\|\sum_{n\ge \max\{s, j, -j/\chi\}}\varepsilon_n(t)\mathcal
  F^{-1}\big(\Delta_{n,
    s}^j\hat{f}\big)\bigg\|_{\ell^p}^p {\: \rm d}t\Big)^{1/p}\lesssim \log(s+2)\|f\|_{\ell^p}.
\end{align}
Indeed,  the multiplier from \eqref{eq:95} can be
rewritten as follows
\begin{align*}
 \sum_{n\ge \max\{s, j, -j/\chi\}}\varepsilon_j(t) \Delta_{n, s}^j(\xi)=\Big(\sum_{a/q\in\mathscr
    U_{(s+1)^k}}-\sum_{a/q\in\mathscr
    U_{s^k}}\Big)\sum_{n\ge \max\{s, j, -j/\chi\}}\mathfrak
  m_n(\xi-a/q)\eta\big(2^{s(A-\chi I)}(\xi-a/q)\big)
\end{align*}
with the functions 
\[
\mathfrak m_n(\xi)=\varepsilon_j(t)\big(\eta\big(2^{nA+jI}\xi\big)-
\eta\big(2^{nA+(j+1)A}\xi\big)\big).
\]
We observe that
\[
|\mathfrak m_n(\xi)|\lesssim\min\big\{|2^{nA+jI}\xi|_{\infty}, |2^{nA+jI}\xi|_{\infty}^{-1}\big\}. 
\] 
The first bound follows from the mean-value theorem, since
\[
\big|\eta\big(2^{nA+jI}\xi\big)-
\eta\big(2^{nA(j+1)I}\xi\big)\big|\lesssim\big|2^{nA+jI}\xi-2^{nA+(j+1)I}\xi\big|\sup_{\xi\in[-1,
  1]^d}\big|\nabla
\eta(\xi)\big| \lesssim |2^{nA+jI}\xi|_{\infty}.
\]
The second bound follows since $\eta$ is a Schwartz
function. Moreover, for every $p\in(1, \infty)$ we have
\[
\big\|\sup_{n\in\NN_0}\big|\calF^{-1}\big(\mathfrak m_n\calF f\big)\big|\big\|_{L^p}\lesssim \|f\|_{L^p}
\]
for every $f\in L^p\big(\RR^d\big)$. Therefore, by \cite{bigs} the
 multiplier  
\[
\sum_{n\ge \max\{s, j, -j/\chi\}}\mathfrak m_n(\xi)
\]
defines a bounded operator 
on $L^p\big(\RR^d\big)$ for all
$p\in(1, \infty)$. Hence, Theorem \ref{th:3} applies and one can see that 
the multiplier 
\[
\sum_{n\ge
  \max\{s, j, -j/\chi\}}\varepsilon_n(t)\Delta_{n, s}^j(\xi)
\]
 defines a
bounded  operator on $\ell^p\big(\ZZ^d\big)$ with the
logarithmic loss with respect to $s$, and \eqref{eq:95} is
established.
  
\end{proof}

    To estimate \eqref{eq:73} we will use
\eqref{eq:1} from Lemma \ref{lem:8} with $r=2$. Namely, 
let $h_{j, s}=2^{\varepsilon |j|}(s+1)^{\tau}$ with some $\varepsilon>0$ and $\tau>2$
which we choose later. Then for some
$2^n\le t_0<t_1<\ldots<t_{h}<2^{n+1}$ such that $t_{v+1}-t_v\simeq
2^n/h$ where $h=\min\{h_{j, s}, 2^n\}$ we have
\begin{multline*}
  J_p\lesssim \bigg\|\Big(\sum_{n\ge \max\{s, j, -j/\chi\}}\sum_{v=0}^{h}\big|(M_{2^{t_v}}-M_{2^n})\mathcal
  F^{-1}\big(\Delta_{n, s}^j\hat{f}\big)\big|^2\Big)^{1/2}\bigg\|_{\ell^p}\\
+\bigg\|\bigg(\sum_{n\ge \max\{s, j, -j/\chi\}}\sum_{v=0}^{h-1}\Big(\sum_{u=t_v}^{t_{v+1}-1}\big|(M_{u+1}-M_{u})\mathcal
  F^{-1}\big(\Delta_{n, s}^j\hat{f}\big)\big|\Big)^2\bigg)^{1/2}\bigg\|_{\ell^p}=J_p^1+J_p^2.
\end{multline*}
\subsubsection{The estimates for $J_p^1$}  We begin with $p=2$ and
show that
\begin{align}
  \label{eq:14}
J_2^1\lesssim 2^{-|j|(1/(2d)-\varepsilon/2)}(s+1)^{-\delta l+\tau/2}\|f\|_{\ell^2}.  
\end{align}
For the simplicity of notation define
\[
\varrho_{n, j}(\xi)=\big(\eta\big(2^{nA+jI}\xi\big)-
\eta\big(2^{nA+(j+1)I}\xi\big)\big)
\eta\big(2^{s(A-\chi I)}\xi\big).
\] 
By Plancherel's theorem and \eqref{eq:66}  we have
\begin{multline*}
  J_2^1=\Big(\sum_{v=0}^{h}\int_{\TT^d}\sum_{a/q\in\mathscr
    U_{(s+1)^l}\setminus\mathscr
    U_{s^l}}\sum_{n\ge \max\{s, j, -j/\chi\}}|m_{2^{t_v}}(\xi)-m_{2^n}(\xi)|^2
\varrho_{n,j}(\xi-a/q)^2|\hat{f}(\xi)|^2 {\: \rm d}\xi\Big)^{1/2}\\
\lesssim 
\Big(\sum_{v=0}^{h}2^{-|j|/d}\int_{\TT^d}\sum_{a/q\in\mathscr
    U_{(s+1)^l}\setminus\mathscr
    U_{s^l}}q^{-2\delta}\eta\big(2^{s(A-\chi I)}(\xi-a/q)\big))|\hat{f}(\xi)|^2 {\: \rm d}\xi\big)^{1/2}\\
\lesssim h^{1/2}2^{-|j|/(2d)}\Big(\int_{\TT^d}(s+1)^{-2\delta l}\sum_{a/q\in\mathscr
    U_{(s+1)^l}\setminus\mathscr
    U_{s^l}}\eta\big(2^{s(A-\chi I)}(\xi-a/q)\big)|\hat{f}(\xi)|^2 {\: \rm d}\xi\Big)^{1/2}\\
\lesssim h^{1/2}2^{-|j|/(2d)}(s+1)^{-\delta l}\|f\|_{\ell^2}
\end{multline*}
as desired. We have used the fact that $q\gtrsim (s+1)^l$ whenever
 $a/q\in\mathscr U_{(s+1)^l}\setminus\mathscr U_{s^l}$ 
 and
\[ 
  \sum_{n\ge \max\{s, j, -j/\chi\}}\varrho_{n, j}(\xi)\lesssim
\eta\big(2^{s(A-\chi I)}(\xi-a/q)\big)
\]
and the disjointness of $\eta\big(2^{s(A-\chi I)}( \cdot-a/q)\big)$
while $a/q$ varies over $\mathscr U_{(s+1)^k}\setminus\mathscr
U_{s^k}$.

Moreover, for any  $p\in(1, \infty)$ we have
\begin{align}
  \label{eq:16}
  J_p^1\lesssim 2^{\varepsilon |j|/2}(s+1)^{\tau/2}\log(s+2)\|f\|_{\ell^p}.
\end{align}
Indeed, 
 appealing to the vector-valued
inequality for the maximal function corresponding to the  averaging
operators from \cite{mst1} we see that  
\begin{multline*}
  J_p^1=\bigg\|\Big(\sum_{n\ge \max\{s, j, -j/\chi\}}\sum_{v=0}^{h}\big|(M_{2^{t_v}}-M_{2^n})\mathcal
  F^{-1}\big(\Delta_{n,
    s}^j\hat{f}\big)\big|^2\Big)^{1/2}\bigg\|_{\ell^p}\\
\lesssim h^{1/2}
\bigg\|\Big(\sum_{n\ge \max\{s, j, -j/\chi\}}\sup_{N\in\NN}M_{N}\big(\big|\mathcal
  F^{-1}\big(\Delta_{n,
    s}^j\hat{f}\big)\big|\big)^2\Big)^{1/2}\bigg\|_{\ell^p}\\
\lesssim h^{1/2}
\bigg\|\Big(\sum_{n\ge \max\{s, j, -j/\chi\}}\big|\mathcal
  F^{-1}\big(\Delta_{n,
    s}^j\hat{f}\big)\big|^2\Big)^{1/2}\bigg\|_{\ell^p}\lesssim h^{1/2}\log(s+2)\|f\|_{\ell^p}.
\end{multline*}
In the last step we have used \eqref{eq:83}. Interpolating now
\eqref{eq:16} with better \eqref{eq:14} estimate, we obtain for some $\varepsilon_p>0$
that
\[
  J_p^1\lesssim (s+1)^{-2}2^{-\varepsilon_p |j|}\|f\|_{\ell^p}.
\]
The proof of \eqref{eq:73} will be completed if we obtain the same
kind of bound for $J_p^2$.
\subsubsection{The estimates for $J_p^2$} 
We begin with $p=2$ and our aim will be to show 
\begin{align}
  \label{eq:17}
J_2^2\lesssim 2^{-\varepsilon
  |j|/2}(s+1)^{-\tau/2}\|f\|_{\ell^2}.  
\end{align}
Since $t_{v+1}-t_v\simeq 2^n/h$ then by the Cauchy--Schwarz inequality
we obtain
\[
  J_2^2\le \bigg\|\bigg(\sum_{n\ge \max\{s, j, -j/\chi\}}2^n/h\sum_{u=2^n}^{2^{n+1}-1}\big|(M_{u+1}-M_{u})\mathcal
  F^{-1}\big(\Delta_{n,
    s}^j\hat{f}\big)\big|^2\bigg)^{1/2}\bigg\|_{\ell^2}.
\]
By \eqref{eq:66} we have for $u\simeq2^n$ that
\begin{align}
  \label{eq:99}
  |m_{u+1}(\xi)-m_{u}(\xi)|\lesssim \min\big\{2^{-n}, q^{-\delta}(2^{-|j|/d}+2^{-n/2})\big\}.
\end{align}
Two cases must be distinguished. Assume now that $h=2^{\varepsilon
  |j|}(s+1)^{\tau}$, therefore, again by Plancherel's theorem,  we obtain 
\begin{multline*}
  J_2^2\le\Big(\sum_{n\ge \max\{s, j, -j/\chi\}}\int_{\TT^d}2^n/h\sum_{u=2^n}^{2^{n+1}-1}\sum_{a/q\in\mathscr
    U_{(s+1)^k}\setminus\mathscr
    U_{s^k}}|m_{u+1}(\xi)-m_{u}(\xi)|^2
\varrho_{n,j}(\xi-a/q)^2|\hat{f}(\xi)|^2 {\: \rm d}\xi\Big)^{1/2}\\
\lesssim
h^{-1/2}\Big(\int_{\TT^d}\sum_{a/q\in\mathscr
    U_{(s+1)^k}\setminus\mathscr
    U_{s^k}}\sum_{n\ge \max\{s, j, -j/\chi\}}
\varrho_{n,j}(\xi-a/q)|\hat{f}(\xi)|^2 {\: \rm d}\xi\Big)^{1/2}\lesssim h^{-1/2}\|f\|_{\ell^2}
\end{multline*}
since by the telescoping nature and the disjointness of supports  when $a/q$ varies over $\mathscr U_{(s+1)^k}\setminus\mathscr
  U_{s^k}$ we have
\[
\sum_{a/q\in\mathscr U_{(s+1)^k}\setminus\mathscr
  U_{s^k}}\sum_{n\ge \max\{s, j, -j/\chi\}} \varrho_{n,j}(\xi-a/q)\lesssim \sum_{a/q\in\mathscr U_{(s+1)^k}\setminus\mathscr
  U_{s^k}}\eta\big(2^{s(A-\chi I)}(\xi-a/q)\big)\lesssim 1.
\]
If $h=2^n$ then by \eqref{eq:99} we get
\begin{multline*}
  J_2^2\le\Big(\sum_{n\ge \max\{s, j, -j/\chi\}}\int_{\TT^d}\sum_{u=2^n}^{2^{n+1}-1}\sum_{a/q\in\mathscr
    U_{(s+1)^k}\setminus\mathscr
    U_{s^k}}|m_{u+1}(\xi)-m_{u}(\xi)|^2
\varrho_{n,j}(\xi-a/q)^2|\hat{f}(\xi)|^2d\xi\Big)^{1/2}\\
\lesssim
2^{-|j|/4}2^{-s/4}\Big(\int_{\TT^d}\sum_{a/q\in\mathscr
    U_{(s+1)^k}\setminus\mathscr
    U_{s^k}}\sum_{n\ge \max\{s, j, -j/\chi\}}
\varrho_{n,j}(\xi-a/q)|\hat{f}(\xi)|^2 {\: \rm d}\xi\Big)^{1/2} \\
\lesssim 2^{-|j|/4}2^{-s/4}\|f\|_{\ell^2}
\end{multline*}
and \eqref{eq:17} is proven.

For $p\in(1, \infty)$ we shall prove that 
\begin{align}
  \label{eq:21}
  J_p^2\lesssim \log(s+2)\|f\|_{\ell^p}. 
\end{align}
Observe that
\begin{multline*}
\sum_{u=2^n}^{2^{n+1}-1}| K_{u+1}- K_u|\lesssim \sum_{u=2^n}^{2^{n+1}-1} 
\Big(\frac{1}{(u+1)^k}\sum_{y\in\BB_{u+1}\setminus\BB_u}\delta_{\mathcal
  Q(y)}
+\frac{1}{u(u+1)^k}\sum_{y\in\BB_u}\delta_{\mathcal Q(y)}\Big)\\
\lesssim\frac{1}{2^{nk}}\sum_{y\in\BB_{2^{n+1}}\setminus\BB_{2^n}}\delta_{\mathcal
  Q(y)}+\frac{1}{2^{(n+1)k}}\sum_{y\in\BB_{2^{n+1}}}\delta_{\mathcal
  Q(y)}\lesssim  K_{2^{n+1}}.
\end{multline*}
This in turn implies 
\begin{multline*}
  J_p^2=\bigg\|\bigg(\sum_{n\ge \max\{s, j, -j/\chi\}}\sum_{v=0}^{h-1}\Big(\sum_{u=t_v}^{t_{v+1}-1}\big|(M_{u+1}-M_{u})\mathcal
  F^{-1}\big(\Delta_{n,
    s}^j\hat{f}\big)\big|\Big)^2\bigg)^{1/2}\bigg\|_{\ell^p}\\
\le\bigg\|\bigg(\sum_{n\ge \max\{s, j, -j/\chi\}}\Big(\sum_{u=2^n}^{2^{n+1}-1}\big|(K_{u+1}-K_{u})\big|*\big(\big|\mathcal
  F^{-1}\big(\Delta_{n,
    s}^j\hat{f}\big)\big|\big)\Big)^2\bigg)^{1/2}\bigg\|_{\ell^p}\\
  \lesssim\bigg\|\bigg(\sum_{n\ge \max\{s, j, -j/\chi\}}\sup_{N\in\NN}M_{N}\big(\big|\mathcal
  F^{-1}\big(\Delta_{n,
    s}^j\hat{f}\big)\big|\big)^2\bigg)^{1/2}\bigg\|_{\ell^p}\\
\lesssim \bigg\|\bigg(\sum_{n\ge \max\{s, j, -j/\chi\}}\big|\mathcal
  F^{-1}\big(\Delta_{n,
    s}^j\hat{f}\big)\big| ^2\bigg)^{1/2}\bigg\|_{\ell^p}\lesssim \log(s+2)\|f\|_{\ell^p}.
\end{multline*}
In the penultimate line we have used vector-valued maximal
estimates corresponding to the averaging operators from \cite{mst1} and in the last line we invoked \eqref{eq:83}.
Interpolating now the estimate \eqref{eq:21} with the estimate from
\eqref{eq:17} we obtain for some $\varepsilon_p>0$
that
\[
  J_p^2\lesssim (s+1)^{-2}2^{-\varepsilon_p |j|}\|f\|_{\ell^p}
\]
and the proof of \eqref{eq:73} is completed.

\section{Long variation estimates for truncated singular integral operators}
\label{sec:6}

For any function $f: \ZZ^d \rightarrow \CC$ with a finite support we have
$$
T_N f(x) = H_N * f(x)
$$
with a kernel $H_N$ defined by
\[
	H_N(x) = \sum_{y \in \BB_N\setminus\{0\}} \delta_{\calQ(y)}K(y)
\]
where $K$ is the kernel as in \eqref{eq:40}
and $\delta_y$ denotes Dirac's delta at $y \in \ZZ^k$ and $\calQ$ is
the canonical polynomial, see Section \ref{sec2}. Let $m_N$ denote the discrete  Fourier transform of $H_N$, i.e.
$$
m_N(\xi) = \sum_{y \in \BB_N\setminus\{0\}} e^{2\pi i \sprod{\xi}{\calQ(y)}}K(y).
$$
Finally, we define
$$
\Psi_t(\xi) = {\rm p.v.}\int_{B_t} e^{2\pi i \sprod{\xi}{\calQ(y)}} K(y) {\: \rm d}y
$$
where $B_t$ is the Euclidean ball in $\RR^k$ centered at the origin
with radius $t>0$.  Using the method of the proof of the
multi-dimensional version of van der Corput lemma in \cite{sw} we
may estimate
\begin{equation}
\label{eq:48}
	\abs{\Psi_{N}(\xi)-\Psi_{cN}(\xi)}=\Big|\int_{B_{1}\setminus
        B_{c}} e^{2\pi i
          \sprod{\xi}{\calQ(Ny)}}N^k K(Ny){\: \rm d}y\Big|
	\lesssim
	\min\big\{1, \norm{N^A \xi}_{\infty}^{-1/d} \big\}
\end{equation}
with the implicit  constant depending on $c\in(0, 1)$.
Additionally, we have
\begin{equation}
	\label{eq:49}
	\abs{\Psi_{N}(\xi)-\Psi_{cN}(\xi)}
	\lesssim
	\norm{N^A \xi}_{\infty}
\end{equation}
due to cancellation condition \eqref{eq:24}. We shall prove that for every $p\in(1, \infty)$
and $r\in(2, \infty)$ there is
$C_{p, r} > 0$ such that for all $f \in \ell^p\big(\ZZ^{d_0}\big)$
and $f\ge0$ we have
\[
          	\big\lVert
	V_r\big(  T_{2^n} f: n\in\NN_0\big)
	\big\rVert_{\ell^p}\le
	C_{p, r}\|f\|_{\ell^p}
\]
and $C_{p, r}\le C_p\frac{r}{r-2}$ for some $C_p>0$.
We begin with proving the following, which is a variant of  Proposition \ref{prop:0}.
\begin{proposition}
  \label{prop:2}
 There is a constant $C>0$ such
  that for every $N\in\NN$, $M\in\NN$ such that $cN\le M\le N$ for
  some $c>0$ and for every $\xi\in [-1/2, 1/2)^d$ satisfying 
        $$
	\Big\lvert \xi_\gamma - \frac{a_\gamma}{q} \Big\rvert \leq
        L_1^{-|\gamma|}L_2
	$$
	for all $\gamma \in \Gamma$, where  $1\le q\le L_3\le N^{1/2}$, $a\in
        A_q$, $L_1\ge N$  and $L_2\ge1$ we have
	\[
         \big|m_N(\xi)-m_M(\xi)-G(a/q)\big(\Psi_{N}(\xi-a/q)-\Psi_{M}(\xi-a/q)\big)\big|
		\le C\Big(L_3/N+L_2L_3/N\sum_{\gamma \in \Gamma}\big(N/L_1\big)^{|\gamma|}\Big).
	\]
\end{proposition}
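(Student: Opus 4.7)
The plan is to follow the proof of Proposition \ref{prop:0} essentially verbatim, adapted to the annular domain $\BB_N \setminus \BB_M$ and to the kernel $K$. The key structural observation is that on this annulus, thanks to the assumption $cN \le M \le N$, we have $|y| \simeq N$ throughout, so the Calder\'on--Zygmund bounds give $|K(y)| \lesssim N^{-k}$ and $|\nabla K(y)| \lesssim N^{-k-1}$, and in particular the singularity of $K$ at the origin never enters. Setting $\theta = \xi - a/q$ and partitioning $y \in \BB_N \setminus \BB_M$ by residue classes $y \equiv r \pmod q$ with $r \in \NN_q^k$, the congruence
\[
\sprod{\xi}{\calQ(y)} \equiv \sprod{\theta}{\calQ(y)} + \sprod{(a/q)}{\calQ(r)} \pmod 1
\]
factors out the Gaussian sum exactly as in Proposition \ref{prop:0}.

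The first step is to replace, within each residue class (writing $y = qy'+r$), the phase $\calQ(qy'+r)$ by $\calQ(qy')$ and the amplitude $K(qy'+r)$ by $K(qy')$. By the mean value theorem the per-term error is at most
\[
|r|\sum_{\gamma\in\Gamma}|\theta_\gamma|\,N^{|\gamma|-1}\cdot N^{-k} + |r|\cdot N^{-k-1}
\lesssim qL_2 N^{-k-1}\sum_{\gamma\in\Gamma}(N/L_1)^{|\gamma|} + qN^{-k-1},
\]
and after summing over the $\lesssim N^k/q^k$ relevant $y'$ in each class and over all $q^k$ classes one obtains exactly the error on the right-hand side of the statement. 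The second step is the Riemann sum comparison
\[
\sum_{y'\in\ZZ^k}f(y') - \int_{\RR^k}f(y'){\: \rm d}y'
= \sum_{y'\in\ZZ^k}\int_{y'+(0,1]^k}\bigl(f(y')-f(t)\bigr){\: \rm d}t
\]
with $f(y') = \mathbf 1_{\{M<|qy'+r|\le N\}}\,e^{2\pi i\sprod{\theta}{\calQ(qy')}}K(qy')$. In the interior of the annulus $|\nabla f|$ is estimated by the same derivative bounds; on the two boundary shells $|y|=M$ and $|y|=N$ the jump of the indicator is absorbed via the lattice point count in a $q$-neighbourhood of a convex surface provided by Proposition~3.1 of \cite{mst1}.

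After these two approximations the inner sum equals $q^{-k}\int_{M<|y+r|\le N}e^{2\pi i\sprod{\theta}{\calQ(y)}}K(y){\: \rm d}y$ up to the desired error, and a final shift $y+r\mapsto y$ (whose cost is again controlled by the same boundary estimate) produces $\Psi_N(\theta)-\Psi_M(\theta)$, with the normalising factor $q^{-k}\sum_{r}e^{2\pi i\sprod{(a/q)}{\calQ(r)}}$ giving $G(a/q)$. The main obstacle compared to Proposition \ref{prop:0} is the presence of two boundary shells instead of one; the hypothesis $cN\le M\le N$ is exactly what guarantees that both shells lie at scale $N$ and that both contributions are bounded by $O(L_3/N)$. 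No use of the cancellation condition \eqref{eq:24} is needed at this stage, since that condition enters only later in the derivation of \eqref{eq:49}.
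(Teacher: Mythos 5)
Your argument is correct and follows essentially the same route as the paper's own proof: factor via residue classes mod $q$, replace $\calQ(qy+r)$ by $\calQ(qy)$ and $K(qy+r)$ by $K(qy)$ (the mean value theorem together with $|y|\simeq N$ on the annulus giving the stated error), then compare the sum to the integral. The only cosmetic difference is ordering — the paper trades the indicator condition $qy+r\in\BB_N\setminus\BB_M$ for $qy\in\BB_N\setminus\BB_M$ \emph{before} the Riemann-sum comparison, whereas you keep the shifted indicator through the comparison and shift the integration domain at the end — but both routes produce the same $O(L_3/N)$ boundary term.
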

\begin{proof}
	Let $\theta = \xi - a/q$. For any $r \in \NN_q^k$, if $y \equiv r \pmod q$ then
	for each $\gamma \in \Gamma$
	$$
	\xi_\gamma y^\gamma \equiv \theta_\gamma y^\gamma
	+ (a_\gamma/q) r^\gamma \pmod 1,
	$$
	thus
	$$
	\sprod{\xi}{\calQ(y)} \equiv \sprod{\theta}{\calQ(y)} + \sprod{(a/q)}{\calQ(r)} \pmod 1.
	$$
	Therefore,
	\[
	\sum_{y \in \BB_N\setminus\BB_M} e^{2\pi i \sprod{\xi}{\calQ(y)}} K(y)
	=
	q^{-k}\sum_{r \in \NN_q^k}
	e^{2\pi i \sprod{(a/q)}{\calQ(r)}}
	\cdot \Big(q^k\sum_{\atop{y \in
            \ZZ^k}{qy+r\in \BB_N\setminus\BB_M}}
	e^{2\pi i \sprod{\theta}{\calQ(qy+r)}}K(qy+r)\Big).
	\]
	If $ q y + r\in \BB_N\setminus\BB_M$ then
	\[
	\big\lvert
	\sprod{\theta}{\calQ(q y + r)} - \sprod{\theta}{\calQ(q y)}
	\big\rvert
	\lesssim
	\norm{r}
	\sum_{\gamma \in \Gamma}
	\abs{\theta_\gamma}
	\cdot
	N^{(\abs{\gamma} - 1)}
	\lesssim
	q \sum_{\gamma \in \Gamma}
	L_1^{-\abs{\gamma}}L_2 N^{(\abs{\gamma}-1)}
	\lesssim
	L_2L_3/N\sum_{\gamma \in \Gamma}\big(N/L_1\big)^{\abs{\gamma}}
	\]
and
	\[
	\big\lvert
	K(q y + r) - K(q y)
	\big\rvert
	\lesssim
	N^{-(k+1)}q.
        \]
	Thus
        \begin{align*}
          q^k\sum_{\atop{y \in
            \ZZ^k}{qy+r\in \BB_N\setminus\BB_M}}
	e^{2\pi i \sprod{\theta}{\calQ(qy+r)}}K(qy+r)&=
q^k\sum_{\atop{y \in
            \ZZ^k}{qy+r\in \BB_N\setminus\BB_M}}
	e^{2\pi i \sprod{\theta}{\calQ(qy)}}K(qy)\\
\nonumber&+\mathcal O\Big(q/N+L_2L_3/N\sum_{\gamma \in \Gamma}\big(N/L_1\big)^{|\gamma|}\Big).
        \end{align*}
Now  we see that
\[
q^k\sum_{\atop{y \in
            \ZZ^k}{qy+r\in \BB_N\setminus\BB_M}}
	e^{2\pi i \sprod{\theta}{\calQ(qy)}}K(qy)=q^k\sum_{\atop{y \in
            \ZZ^k}{qy\in \BB_N\setminus\BB_M}}
	e^{2\pi i \sprod{\theta}{\calQ(qy)}}K(qy)+\mathcal O\big(q/N\big).
\]
Thus
        \[
        \sum_{y \in \BB_N\setminus\BB_M} e^{2\pi i \sprod{\xi}{\calQ(y)}} K(y)
	=G(a/q)\cdot q^k\sum_{\atop{y \in \ZZ^k}{qy\in\BB_N\setminus\BB_M}}
	e^{2\pi i \sprod{\theta}{\calQ(qy)}}K(qy)+\mathcal O\Big(q/N+L_2L_3/N\sum_{\gamma \in \Gamma}\big(N/L_1\big)^{|\gamma|}\Big).
        \]
Now we are going to replace the  exponential sum on the right-hand
side of the last display by the integral. By the mean value theorem,  we  obtain
  \begin{multline*}
    q^k\Big|\sum_{\atop{y \in \ZZ^k}{
        qy\in\BB_N\setminus\BB_M}}
    e^{2\pi i \sprod{\theta}{\calQ(qy)}}K(qy)-\int_{B_{N/q}\setminus B_{M/q}}e^{2\pi i \theta\cdot\calQ(qt)}K(qt){\: \rm d}t\Big|\\
    = q^k\Big|\sum_{y \in \ZZ^k } e^{2\pi i
      \sprod{\theta}{\calQ(qy)}}K(qy)\ind{\BB_N\setminus\BB_M}(qy)-\sum_{y \in \ZZ^k
        }\int_{y+(0, 1]^k}e^{2\pi i \theta\cdot\calQ(qt)}K(qt)\ind{B_{N/q}\setminus B_{M/q}}(t){\: \rm d}t\Big|\\
=q^k\Big|\sum_{y \in \ZZ^k} \int_{(0, 1]^k}e^{2\pi i
      \theta\cdot\calQ(qy)}K(qy)\ind{B_{N}\setminus B_{M}}(qy)-e^{2\pi i
      \theta\cdot\calQ(q(t+y))}K(q(t+y))\ind{B_{N}\setminus B_{M}}(q(y+t)){\:\rm d}t\Big|\\
=\mathcal O\Big(q/N+L_2L_3/N\sum_{\gamma \in \Gamma}\big(N/L_1\big)^{|\gamma|}\Big).
  \end{multline*}
This completes the proof of Proposition \ref{prop:2}.
\end{proof}

In Remark \ref{rem:10} we mentioned that 
Theorem \ref{thm:4} holds with the operators $T_N$ defined with the sets
$\mathbb G_N$ instead of $\mathbb B_N$. Then we obtain analogous
definitions of
$H_N$, $m_N$ and $\Psi_N$ with the sets $\mathbb G_N$, $G_1=G$ and $G_t$ in place
of the sets $\mathbb B_N$, $B_1$ and $B_t$ respectively. All of the arguments
remain unchanged apart from the proof of Proposition
\ref{prop:2}.  However,  \cite[Proposition
3.1]{mst1} used  with the sets $\mathbb G_N$ allows us to follow the
same scheme and we obtain conclusion of the same type.

\medskip

As in the previous sections  fix the numbers $\chi>0$ and $l\in\NN$ whose precise
values will be chosen later, and let us consider for every $n\in\NN_0$
the multipliers
\begin{align}
\label{eq:53}
  \Xi_{n}(\xi)=\sum_{a/q\in\mathscr U_{n^l}}\eta\big(2^{n(A-\chi I)}(\xi-a/q)\big)
\end{align}
with $\mathscr U_{n^l}$ defined as in \eqref{eq:156}.
Theorem \ref{th:3} guarantees  that for every $p\in(1, \infty)$ 
\begin{align}
\label{eq:55}
  \big\|\mathcal
  F^{-1}\big(\Xi_{n}\hat{f}\big)\big\|_{\ell^p}\lesssim \log(n+2)\|f\|_{\ell^p}.
\end{align}
The implicit constant in \eqref{eq:55} depends on the parameter
$\rho>0$, see Section \ref{sec:3}. However, from
now on we will assume that $\rho>0$ and the integer $l\ge10$ are related by the
equation 
\begin{align}
  \label{eq:7}
  10\rho l=1.
\end{align}
Observe that
\begin{multline}
\label{eq:82}
\big\|V_r\big(T_{2^n}f: n\in\NN_0\big)\big\|_{\ell^p}
\le
\Big\|V_r\Big(\mathcal
  F^{-1}\Big(\sum_{j=1}^n(m_{2^j}-m_{2^{j-1}})\Xi_{j}\hat{f}\Big): n\in\NN_0\Big)\Big\|_{\ell^p} \\
+ \Big\|V_r\Big(\mathcal
  F^{-1}\Big(\sum_{j=1}^n(m_{2^j}-m_{2^{j-1}})(1-\Xi_{j})\hat{f}\Big): n\in\NN_0\Big)\Big\|_{\ell^p}.
\end{multline}
\subsection{The estimate for the second norm in \eqref{eq:82}}
Since the variational norm is increasing when $r$ decreases we get
\[
\Big\|V_1\Big(\mathcal
  F^{-1}\Big(\sum_{j=1}^n(m_{2^j}-m_{2^{j-1}})(1-\Xi_{j})\hat{f}\Big):
  n\in\NN_0\Big)\Big\|_{\ell^p}
 \le\sum_{n\in\NN_0}
\big\|\mathcal
  F^{-1}\big((m_{2^n}-m_{2^{n-1}})(1-\Xi_{n})\hat{f}\big)\big\|_{\ell^p}.
\]
Therefore, it suffices to show that
\begin{align}
  \label{eq:86}
  \big\|\mathcal
  F^{-1}\big((m_{2^n}-m_{2^{n-1}})(1-\Xi_{n})\hat{f}\big)\big\|_{\ell^p}\le
  (n+1)^{-2}
  \|f\|_{\ell^p}.
\end{align}
For every $1<p<\infty$ we have 
\begin{align}
\label{eq:98}
  \big\|\mathcal
  F^{-1}\big((m_{2^n}-m_{2^{n-1}})(1-\Xi_{n})\hat{f}\big)\big\|_{\ell^p}\lesssim
\big\|
  M_{2^n}f\big\|_{\ell^p}+\big\| M_{2^n}\big(
  \mathcal
  F^{-1}\big(\Xi_{n}\hat{f}\big)\big)\big\|_{\ell^p}\lesssim \log(n+2)\|f\|_{\ell^p}
\end{align}
since for $f\ge0$ we have a pointwise bound
\[
\big|\mathcal
  F^{-1}\big((m_{2^n}-m_{2^{n-1}})\hat{f}\big)(x)\big|\lesssim M_{2^n}f(x)
\]
where $M_N$ is the averaging operator from Section \ref{sec:4}.
In fact we  improve estimate
\eqref{eq:98} 
for $p=2$. 
Indeed, we will show that for big enough $\alpha>0$, which will be specified
later, and for all $n\in\NN_0$  we have
\begin{align}
  \label{eq:63}
 \big|
 (m_{2^n}(\xi)-m_{2^{n-1}}(\xi))(1-\Xi_{n}(\xi))\big|\lesssim (n+1)^{-\alpha}.
\end{align}
This estimate will be a consequence of Theorem \ref{thm:3}. For do so,
by Dirichlet's principle we have for every $\gamma\in\Gamma$ 
\[
\bigg|\xi_{\gamma}-\frac{a_{\gamma}}{q_{\gamma}}\bigg|\le \frac{n^{\beta}}{q_{\gamma}2^{n|\gamma|}}
\] 
where $1\le q_{\gamma}\le n^{-\beta}2^{n|\gamma|}$. In order to apply
Theorem \ref{thm:3} we must show that there exists some
$\gamma\in\Gamma$ such that $n^{\beta}\le q_{\gamma }\le
n^{-\beta}2^{n|\gamma|}$. Suppose for a contradiction that for every
$\gamma \in \Gamma$  we have $1\le q_{\gamma }<n^{\beta} $ then for
some 
$q\le \mathrm{lcm}(q_{\gamma}: \gamma\in\Gamma)\le n^{\beta d}$ we have 
\[
\bigg|\xi_{\gamma}-\frac{a_{\gamma}'}{q}\bigg|\le \frac{n^{\beta}}{2^{n|\gamma|}}
\] 
where $\mathrm{gcd}\big(q, \mathrm{gcd}({a_{\gamma}'}:
\gamma\in\Gamma)\big)=1$.
Hence, taking $a'=(a_{\gamma}': \gamma\in\Gamma)$ we have
$a'/q\in\mathscr U_{n^l}$ provided that $\beta d<l$. On the
other hand, if $1-\Xi_{n}(\xi)\not=0$ then for every
$a'/q\in\mathscr U_{n^l}$ there exists $\gamma\in\Gamma$ such that
\[
\bigg|\xi_{\gamma}-\frac{a_{\gamma}'}{q}\bigg|> \frac{1}{16d\cdot2^{n(|\gamma|-\chi)}}.
\] 
Therefore, one obtains
\[
2^{\chi n}<16dn^{\beta}
\] 
 but this gives a contradiction, for sufficiently large $n\in\NN$.
We have already shown that there exists some
$\gamma\in\Gamma$ such that $n^{\beta}\le q_{\gamma }\le n^{-\beta}2^{n|\gamma|}$ and consequently 
Theorem \ref{thm:3} yields 
\[
	|m_{2^n}(\xi)-m_{2^{n-1}}(\xi)|\lesssim (n+1)^{-\alpha}
\]
provided that $1-\Xi_{n}(\xi)\not=0$ and this proves \eqref{eq:63} and we
obtain

\begin{align}
	\label{eq:110}
	\big\|
	\calF^{-1}\big((m_{2^n}-m_{2^{n-1}})(1-\Xi_{n})\hat{f}\big)
	\big\|_{\ell^2}
	\lesssim
	(1+n)^{-\alpha} \log(n+2) 
	\|f\|_{\ell^2}.
\end{align}
Interpolating \eqref{eq:110} with \eqref{eq:98} we obtain 
\[
	\big\|\calF^{-1}\big((m_{2^n}-m_{2^{n-1}}) (1-\Xi_{n})\hat{f}\big)\big\|_{\ell^p}
	\lesssim
	(1+n)^{-c_p\alpha}\log(n+2)\|f\|_{\ell^p}.
\]
for some $c_p>0$. Choosing $\alpha>0$ and $l\in\NN$ appropriately
large one obtains \eqref{eq:86}.

\subsection{The estimate for the first norm in \eqref{eq:82}}

 Note that for
any $\xi\in\TT^d$ so that
\[
\bigg|\xi_{\gamma}-\frac{a_{\gamma}}{q}\bigg|\le \frac{1}{8d\cdot2^{j(|\gamma|-\chi)}}
\] 
for every $\gamma\in \Gamma$
with $1\le q\le e^{j^{1/10}}$ we have 
\begin{align}
\label{eq:116}
  m_{2^j}(\xi)-m_{2^{j-1}}(\xi)=G(a/q)\big(\Psi_{2^j}(\xi-a/q)-\Psi_{2^{j-1}}(\xi-a/q)\big)+q^{-\delta}E_{2^j}(\xi)
\end{align}
where 
\begin{align}
\label{eq:117}
  |E_{2^j}(\xi)|\lesssim 2^{-j/2}.
\end{align}
These two properties \eqref{eq:116} and \eqref{eq:117} follow from
Proposition \ref{prop:2} with $L_1=2^j$, $L_2=8d\cdot2^{\chi j}$ and
$L_3=e^{j^{1/10}}$, since
\[
|E_{2^j}(\xi)|\lesssim q^{\delta}L_2L_32^{-j}\lesssim
\big(e^{-j((1-\chi)\log 2-2j^{-9/10})}\big)\lesssim 2^{-j/2}
\]
which holds for sufficiently large $j\in\NN$, when $\chi>0$ is
sufficiently small. Let us introduce for
every $j\in\NN$ 
new multipliers
\[
  \nu_{2^j}(\xi)=\sum_{a/q\in\mathscr U_{j^l}}G(a/q)\big(\Psi_{2^j}(\xi-a/q)-\Psi_{2^{j-1}}(\xi-a/q)\big)\eta\big(2^{j(A-\chi I)}(\xi-a/q)\big)
\]
and note that by \eqref{eq:116} 
\begin{align*}
 \big|(m_{2^j}(\xi)-m_{2^{j-1}}(\xi))\Xi_j(\xi)-\nu_{2^j}(\xi)\big|\lesssim 2^{-j/2}
\end{align*}
and consequently by Plancherel's theorem
\begin{align}
\label{eq:119}
  \big\|\mathcal
  F^{-1}\big(\big((m_{2^j}-m_{2^{j-1}})\Xi_j-\nu_{2^j}\big)\hat{f}\big)\big\|_{\ell^2}\lesssim
2^{-j/2}\|f\|_{\ell^2}.
\end{align}
Moreover, by Theorem \ref{th:3} we have
\begin{align*}
  \big\|\mathcal
  F^{-1}\big((m_{2^j}-m_{2^{j-1}})\Xi_j\hat{f}\big)\big\|_{\ell^p}\lesssim  \log(j+2)\|f\|_{\ell^p}
\end{align*}
and
\begin{align*}
\big\|\mathcal F^{-1}(\nu_{2^j}\hat{f})\big\|_{\ell^p}
\lesssim
  |\mathscr U_{j^l}|\|f\|_{\ell^p}\lesssim e^{(d+1)j^{1/10}}\|f\|_{\ell^p}
\end{align*}
thus
\begin{align}
\label{eq:120}
  \big\|\mathcal
  F^{-1}\big(\big((m_{2^j}-m_{2^{j-1}})\Xi_j-\nu_{2^j}\big)\hat{f}\big)\big\|_{\ell^p}\lesssim
e^{(d+1)j^{1/10}}\|f\|_{\ell^p}.
\end{align}
Interpolating now \eqref{eq:119} with \eqref{eq:120} we can conclude
that for some $c_p>0$
\begin{align}
\label{eq:121}\big\|\mathcal
  F^{-1}\big(\big((m_{2^j}-m_{2^{j-1}})\Xi_j-\nu_{2^j}
\big)\hat{f}\big)\big\|_{\ell^p}\lesssim 2^{-c_pj}\|f\|_{\ell^p}.
\end{align}
For every $j\in\NN$, $s\in\NN_0$ define multipliers
\[
  \nu_{2^j}^s(\xi)=\sum_{a/q\in\mathscr U_{(s+1)^l}\setminus\mathscr
    U_{s^l}}
G(a/q)\big(\Psi_{2^j}(\xi-a/q)-\Psi_{2^{j-1}}(\xi-a/q)\big)\eta\big(2^{s(A-\chi I)}(\xi-a/q)\big)
\]
and note that by \eqref{eq:48} we see
\begin{multline}
\label{eq:123}
  \Big|
\nu_{2^j}(\xi)-\sum_{0\le s<j}\nu_{2^j}^s(\xi)\Big|\\
\le\sum_{0\le s<j}
\sum_{a/q\in\mathscr U_{(s+1)^l}\setminus\mathscr
    U_{s^l}}
|G(a/q)|\big|\Psi_{2^j}(\xi-a/q)-\Psi_{2^{j-1}}(\xi-a/q)\big|\big|\eta\big(2^{s(A-\chi
  I)}(\xi-a/q)\big)-\eta\big(2^{j(A-\chi I)}(\xi-a/q)\big)\big|\\
\lesssim 2^{-\chi j/d}
\end{multline}
since $|\Psi_{2^j}(\xi-a/q)-\Psi_{2^{j-1}}(\xi-a/q)|\lesssim 2^{-\chi j/d}$, provided that
$\eta\big(2^{s(A-\chi I)}(\xi-a/q)\big)-\eta\big(2^{j(A-\chi
  I)}(\xi-a/q)\big)\not=0$. The estimate \eqref{eq:123} combined with
Plancherel's theorem implies that
\begin{align}
\label{eq:124}
\Big\|\mathcal F^{-1}\Big(\big(\nu_{2^j}-\sum_{0\le
  s<j}\nu_{2^j}^s\big)\hat{f}\Big)\Big\|_{\ell^2}\lesssim
2^{-\chi j/d}\|f\|_{\ell^2}
\end{align}
Moreover, since $|\mathscr U_{s^l}|\le |\mathscr U_{j^l}|\lesssim e^{(d+1)j^{1/10}}$ we
have
\begin{align}
\label{eq:127}
  \Big\|\mathcal F^{-1}\Big(\big(\nu_{2^j}-\sum_{0\le
  s<j}\nu_{2^j}^s\big)\hat{f}\Big)\Big\|_{\ell^p}\lesssim e^{(d+1)j^{1/10}}\|f\|_{\ell^p}.
\end{align}
Interpolating \eqref{eq:124} with \eqref{eq:127} one immediately
concludes that  for some $c_p>0$ 
\begin{align}
  \label{eq:128}
\Big\|\mathcal F^{-1}\Big(\big(\nu_{2^j}-\sum_{0\le
  s<j}\nu_{2^j}^s\big)\hat{f}\Big)\Big\|_{\ell^p}\lesssim 2^{-c_pj}\|f\|_{\ell^p}.
\end{align}
In view of \eqref{eq:121} and \eqref{eq:128}  it suffices to prove that
for every $s\in\NN_0$ we have
\begin{align}
  \label{eq:129}
\Big\|V_r\Big(\mathcal
  F^{-1}\Big(\sum_{j=s+1}^n\nu_{2^j}^s\hat{f}\Big): n\in\NN_0\Big)\Big\|_{\ell^p}
  \lesssim
  (s+1)^{-2}
  \|f\|_{\ell^p}.
\end{align}

\subsection{$\ell^2(\ZZ^d)$ estimates for \eqref{eq:129}}
Our aim will be to prove  the
following.
\begin{theorem}
	\label{thm:2}
	For every $r\in(2, \infty)$ there is $C_r > 0$ such that for any $s \in \NN_0$ and
        $f\in\ell^2\big(\ZZ^d\big)$
        \begin{align}
          \label{eq:130}
\Big\|V_r\Big(\mathcal
  F^{-1}\Big(\sum_{j=s+1}^n\nu_{2^j}^s\hat{f}\Big): n\in\NN_0\Big)\Big\|_{\ell^2}
  \le C_r
  (s+1)^{-\delta l+1}
 \|f\|_{\ell^2}
        \end{align}
with $l\in\NN_0$ defined as in \eqref{eq:7} and $\delta>0$ as in \eqref{eq:20}.
\end{theorem}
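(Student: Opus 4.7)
The plan is to telescope the partial sum, reduce to controlling the $V_r$-seminorm of a Fourier-localized partial sum of the smoothed multipliers $\Psi_{2^n}(\xi-a/q)\eta(\cdots)$, and then apply Lemma~\ref{lem:6} together with Plancherel and the sharp van der Corput decay of $\Psi_{2^n}$. The telescoping gives
\[
\sum_{j=s+1}^n \nu_{2^j}^s(\xi)
= \sum_{a/q \in \mathcal{U}} G(a/q)\bigl[\Psi_{2^n}(\xi-a/q)-\Psi_{2^s}(\xi-a/q)\bigr]\,\eta\bigl(2^{s(A-\chi I)}(\xi-a/q)\bigr),
\]
where $\mathcal{U} := \mathscr{U}_{(s+1)^l}\setminus\mathscr{U}_{s^l}$. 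The $\Psi_{2^s}$ piece is constant in $n$ and drops from $V_r$, so it suffices to estimate $V_r$ of $\tilde R_n f := \calF^{-1}\bigl(\sum_{a/q}G(a/q)\Psi_{2^n}(\cdot-a/q)\eta(\cdots)\hat f\bigr)$. Fix $\chi>0$ small (compatibly with $10\rho l=1$) so that the cutoffs $\eta(2^{s(A-\chi I)}(\cdot-a/q))$ are pairwise disjointly supported over $a/q\in\mathscr{U}_{(s+1)^l}$: distinct rationals there are separated by $\gtrsim 1/(qq')\gtrsim e^{-2(s+1)^{1/10}}$, which exceeds the cutoff width $2^{-s(1-\chi)}$ for $s$ large. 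Since $q\notin P_{s^l}\supseteq\NN_{s^l}$ forces $q>s^l$ on $\mathcal{U}$, the bound \eqref{eq:20} gives $|G(a/q)|\lesssim q^{-\delta}\lesssim(s+1)^{-\delta l}$.

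Next, set $\kappa_s:=\lceil Cs\rceil$ for $C$ large and split $V_r$ via \eqref{eq:19} into $0\le n\le 2^{\kappa_s}$ and $n>2^{\kappa_s}$. For the small range apply Lemma~\ref{lem:6}:
\[
V_r\bigl(\tilde R_n f(x):0\le n\le 2^{\kappa_s}\bigr)
\le \sqrt{2}\sum_{i=0}^{\kappa_s}\biggl(\sum_j\bigl|\tilde R_{(j+1)2^i}f(x)-\tilde R_{j2^i}f(x)\bigr|^2\biggr)^{1/2}.
\]
Taking $\ell^2$-norms, then using Plancherel and the disjoint supports of the $\eta$-cutoffs:
\[
\sum_j\bigl\|\tilde R_{(j+1)2^i}f-\tilde R_{j2^i}f\bigr\|_{\ell^2}^2 \le (s+1)^{-2\delta l}\sum_{a/q}\int\biggl(\sum_j\bigl|\Psi_{2^{(j+1)2^i}}(\xi-a/q)-\Psi_{2^{j2^i}}(\xi-a/q)\bigr|^2\biggr)\eta^2|\hat f|^2\,d\xi.
\]
Using the telescoping $\Psi_{2^{n_1}}-\Psi_{2^{n_2}}=\sum_m(\Psi_{2^{m+1}}-\Psi_{2^m})$ together with $|\Psi_{2^{m+1}}(\eta)-\Psi_{2^m}(\eta)|\lesssim \min\bigl(|2^{mA}\eta|_\infty,\,|2^{mA}\eta|_\infty^{-1/d}\bigr)$ from \eqref{eq:48}--\eqref{eq:49}, one obtains $\sum_j|\Psi_{2^{(j+1)2^i}}(\eta)-\Psi_{2^{j2^i}}(\eta)|^2\lesssim 1$ \emph{uniformly in $\eta$ and $i$} (dyadic summation centered at the transition scale $|2^{mA}\eta|_\infty\sim 1$). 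Combined with $\sum_{a/q}\int\eta^2|\hat f|^2\le\|\hat f\|_{L^2(\TT^d)}^2=\|f\|_{\ell^2}^2$, each $i$-term is $\lesssim(s+1)^{-\delta l}\|f\|_{\ell^2}$; summing over $i\in\{0,\ldots,\kappa_s\}$ yields the factor $\kappa_s+1\lesssim s+1$, producing the contribution $(s+1)^{-\delta l+1}\|f\|_{\ell^2}$.

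For the tail $n>2^{\kappa_s}$, either use the pointwise decay $|\Psi_{2^n}(\xi-a/q)|\lesssim 2^{-(n-s)/d}$ on the support of $\eta(2^{s(A-\chi I)}(\cdot-a/q))$ together with $V_r\le V_1=\sum|\cdot|$ and Plancherel to obtain a geometrically summable, hence negligible, contribution; or invoke the transference principle (Proposition~\ref{prop:10}) $a/q$-wise to reduce to the continuous $L^2$ variational estimate for $\Psi_{2^n}$, which by L\'epingle's inequality (Theorems~\ref{thm:22}--\ref{thm:23} in the Appendix) supplies the constant $\frac{r}{r-2}$. The main obstacle is establishing the uniform-in-$i$ estimate $\sum_j|\Psi_{2^{(j+1)2^i}}(\eta)-\Psi_{2^{j2^i}}(\eta)|^2\lesssim 1$: a naive Cauchy--Schwarz over the $2^i$ consecutive unit steps would pay a factor of $2^i$, leading to a bound exponential in $s$. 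Exploiting the explicit oscillatory decay \eqref{eq:48} and the cancellation \eqref{eq:49} of $\Psi_{2^n}$ avoids this loss and keeps the final bound at the desired $(s+1)^{-\delta l+1}$.
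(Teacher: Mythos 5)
Your reduction and the short-range estimate are essentially the same as the paper's. Telescoping to $\tilde R_n f - \tilde R_s f$, applying Lemma~\ref{lem:6}, passing to Plancherel, and using the pointwise bound $\sum_{m\in\ZZ}|\Psi_{2^{m}}-\Psi_{2^{m-1}}|\lesssim 1$ together with disjointness of the cutoffs and $|G(a/q)|\lesssim (s+1)^{-\delta l}$ is precisely what the paper does; your uniform-in-$i$ bound $\sum_j|\Psi_{2^{(j+1)2^i}}(\eta)-\Psi_{2^{j2^i}}(\eta)|^2\lesssim 1$ is correct (it follows from $\big(\sum_m|\Delta_m(\eta)|\big)^2\lesssim 1$). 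Your choice $\kappa_s\sim s$ rather than the paper's $\kappa_s\sim s^{1/10}$ changes nothing for this part.

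The tail $n>2^{\kappa_s}$ is where there is a real gap. Your first option asserts the pointwise decay $|\Psi_{2^n}(\xi-a/q)|\lesssim 2^{-(n-s)/d}$ on $\supp\eta\big(2^{s(A-\chi I)}(\cdot-a/q)\big)$. This is false: $\Psi_{2^n}(\eta)$ converges as $n\to\infty$ and is only $O(1)$; only the \emph{differences} $\Psi_{2^n}-\Psi_{2^{n-1}}$ satisfy \eqref{eq:48}, and even their decay $\lesssim|2^{nA}\eta|_\infty^{-1/d}$ is not uniform over the support of $\eta(2^{s(A-\chi I)}\cdot)$, since $\eta$ ranges through a full neighborhood of $0$ where $|2^{nA}\eta|_\infty$ can be arbitrarily small. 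So the proposed $V_1$ majorization does not produce a geometrically summable tail. Your second option, ``invoke the transference principle $a/q$-wise,'' also does not close the argument as stated: applying Proposition~\ref{prop:10} separately to each frequency piece $\Psi_{2^n}(\cdot-a/q)\eta(\cdot-a/q)$ and then summing over $a/q\in\mathscr U_{(s+1)^l}\setminus\mathscr U_{s^l}$ yields only an $\ell^1$ sum over $a/q$, which by Cauchy--Schwarz costs a factor $|\mathscr U_{(s+1)^l}|^{1/2}\gtrsim e^{c(s+1)^{1/10}}$ that overwhelms the $(s+1)^{-\delta l}$ gain from the Gaussian sums. The paper resolves this by the translation-averaging device: defining $I(x,y)$ and $J(x,y)$, showing $\|I(x,x)\|_{\ell^2}$ is close to the average of $\|I(x,x+u)\|_{\ell^2}$ over $u\in\NN_{Q_s}^d$ using the decay of the differences \emph{after} the truncation at $2^{\kappa_s}$, exploiting $Q_s\ZZ^d$-periodicity to rewrite the average as $\sum_u\|I(u,\cdot)\|_{\ell^2}^2$, and only then applying the transference proposition to the whole (fixed-$u$) periodic multiplier, which preserves $\ell^2$-orthogonality over $a/q$ and gives $(s+1)^{-\delta l}$ without the cardinality loss. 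That averaging step is the missing ingredient in your proposal.
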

\begin{proof}
  For $s \in \NN_0$ we set $\kappa_s = 20 d \big(\lfloor
  (s+1)^{1/10}\rfloor+1\big)$ and $Q_s = \big(\big\lfloor
  e^{(s+1)^{1/10}}\big\rfloor\big)!$. We shall estimate separately the
  pieces of $r$-variations where $0\le n \le 2^{\kappa_s}$ and $n \ge
  2^{\kappa_s}$. By \eqref{eq:19} and \eqref{eq:25} we see that
	\begin{multline}
		\label{eq:131}
                \Big\|V_r\Big(\mathcal
                F^{-1}\Big(\sum_{j=s+1}^n\nu_{2^j}^s\hat{f}\Big):
                n\in\NN_0\Big)\Big\|_{\ell^2} \lesssim \big\lVert
                \calF^{-1}\big(\nu_{2^{s+1}}^s \hat{f} \big)
                \big\rVert_{\ell^2} \\
+\Big\|V_r\Big(\mathcal
                F^{-1}\Big(\sum_{j=s+1}^n\nu_{2^j}^s\hat{f}\Big):
                0\le n\le 2^{\kappa_s}\Big)\Big\|_{\ell^2}
		+\Big\|V_r\Big(\mathcal
                F^{-1}\Big(\sum_{j=s+1}^n\nu_{2^j}^s\hat{f}\Big):
                n\ge2^{\kappa_s}\Big)\Big\|_{\ell^2}.
	\end{multline}
	
By Plancherel's theorem, \eqref{eq:20} and the disjointness of supports of $\eta_s(\xi-a/q)$'s
	while $a/q$ varies over $\mathscr{U}_{(s+1)^l}\setminus\mathscr{U}_{s^l}$, the first term in \eqref{eq:131} is bounded by
	$(s+1)^{-\delta l}\|f\|_{\ell^2}$.
        Now we estimate the supremum over $0 \le n \le
        2^{\kappa_s}$. By Lemma \ref{lem:6} we have
	$$
	\Big\|V_r\Big(\mathcal
                F^{-1}\Big(\sum_{j=s+1}^n\nu_{2^j}^s\hat{f}\Big):
                0\le n\le 2^{\kappa_s}\Big)\Big\|_{\ell^2}
\lesssim 
	\sum_{i = 0}^{\kappa_s}
	\Big(
	\sum_{j = 0}^{2^{\kappa_s-i}-1}
	\Big\lVert
	\sum_{m \in I_j^i}
	\calF^{-1}\big(\nu_{2^{m}}^s \hat{f}\big)
	\Big\rVert_{\ell^2}^2
	\Big)^{1/2}
	$$
	where $I_j^i = (j 2^i, (j+1) 2^i]$. For any $i \in \{0, \ldots, \kappa_s\}$, by Plancherel's
	theorem  we get
	\begin{multline*}
		\sum_{j = 0}^{2^{\kappa_s-i}-1}
	\Big\lVert
	\sum_{m \in I_j^i}
	\calF^{-1}\big(\nu_{2^{m}}^s \hat{f}\big)
	\Big\rVert_{\ell^2}^2
=
		\sum_{j = 0}^{2^{\kappa_s-i} - 1}
		\sum_{m,m' \in I_j^i}
		\int_{\TT^d}
		\abs{\nu_{2^{m}}^s(\xi)}
		\cdot
		\abs{\nu_{2^{m'}}^s(\xi)}
		\abs{\hat{f}(\xi)}^2
		{\:\rm d}\xi\\
		\leq
		\sum_{a/q \in \mathscr{U}_{(s+1)^l}\setminus\mathscr{U}_{s^l}}
		\abs{G(a/q)}^2
		\sum_{j = 0}^{2^{\kappa_s-i} - 1}
		\sum_{m, m' \in I_j^i}
		\int_{\TT^d}
		\abs{\Delta_m(\xi - a/q)}
		\cdot
		\abs{\Delta_{m'}(\xi - a/q)}
		\cdot
		\eta_s(\xi - a/q)^2
		\abs{\hat{f}(\xi)}^2
		{\: \rm d}\xi,
	\end{multline*}
where $\Delta_m(\xi)=\Psi_{2^{m}}(\xi)-\Psi_{2^{m-1}}(\xi)$ and
$\eta_s(\xi)=\eta(2^{s(A-\chi I)}\xi)$, since the supports are effectively disjoint.
	Using \eqref{eq:48} and \eqref{eq:49} we conclude
	\begin{equation*}
\sum_{m \in \ZZ} \big\lvert
                \Delta_m(\xi)  \big\rvert \lesssim  \sum_{m \in \ZZ} 
                \min\big\{|2^{mA}\xi|_{\infty}, |2^{mA}\xi|_{\infty}^{-1/d}\big\}   \lesssim 1.
	\end{equation*}
	Therefore, by \eqref{eq:20} we may estimate
        \begin{multline*}
        \sum_{j = 0}^{2^{\kappa_s-i} - 1}
	\Big\lVert
	\sum_{m \in I_j^i}
	\calF^{-1}\big(\nu_{2^{m}}^s \hat{f}\big)
	\Big\rVert_{\ell^2}^2
	\lesssim
	(s+1)^{-2\delta l}
	\sum_{a/q \in \mathscr{U}_{(s+1)^l}\setminus\mathscr{U}_{s^l}}
	\int_{\TT^d} \eta_s(\xi - a/q)^2 \abs{\hat{f}(\xi)}^2 {\: \rm d}\xi
	\lesssim
	(s+1)^{-2\delta l}
	\lVert f \rVert_{\ell^2}^2  
        \end{multline*}
since if $a/q \in \mathscr{U}_{(s+1)^l}\setminus\mathscr{U}_{s^l}$
then $q\gtrsim (s+1)^l$. In the last step we have used disjointness of supports of $\eta_s(\cdot - a/q)$
	while $a/q$ varies over $\mathscr{U}_{(s+1)^l}\setminus\mathscr{U}_{s^l}$. We have just proven
	\begin{equation}
\label{eq:132}
		\Big\|V_r\Big(\mathcal
                F^{-1}\Big(\sum_{j=s+1}^n\nu_{2^j}^s\hat{f}\Big):
                0\le n\le 2^{\kappa_s}\Big)\Big\|_{\ell^2}
		\lesssim
		\kappa_s (s+1)^{-\delta l } 
		\lVert f \rVert_{\ell^2}
		\lesssim (s+1)^{-\delta l +1}
		\vnorm{f}_{\ell^2}.
	\end{equation}
	Next, we consider the case when the supremum is taken over $n \geq 2^{\kappa_s}$. For any
	$x, y \in \ZZ^d$ we define
	\[
	I(x, y)
	=
	V_r\Big(
	\sum_{a/q \in \mathscr{U}_{(s+1)^l}\setminus\mathscr{U}_{s^l}}
	G(a/q)
	e^{-2\pi i \sprod{(a/q)}{x}}
	\calF^{-1} \Big(
	\sum_{j=s+1}^n\big(\Psi_{2^j}-\Psi_{2^{j-1}}\big)
	\eta_s
	\hat{f}(\cdot + a/q)
	\Big)(y):
                n\ge 2^{\kappa_s}\Big)
	\]
    and
	\begin{align*}
		J(x, y) =
		\sum_{a/q \in \mathscr{U}_{(s+1)^l}\setminus\mathscr{U}_{s^l}} G(a/q) e^{-2\pi i \sprod{(a/q)}{x}}
		\calF^{-1} \big( \eta_s \hat{f}(\cdot + a/q)\big)(y).
	\end{align*}
	By Plancherel's theorem, for any $u \in \NN^d_{Q_s}$ and $a/q \in \mathscr{U}_{(s+1)^l}\setminus\mathscr{U}_{s^l}$
	we have
	\begin{multline*}
		\big\lVert
		\calF^{-1}\big((\Psi_{2^n}-\Psi_{2^{n-1}})\eta_s
                \hat{f}(\cdot + a/q)\big)(x+u)
		-
		\calF^{-1}\big( (\Psi_{2^n}-\Psi_{2^{n-1}})\eta_s \hat{f}(\cdot + a/q)\big)(x)
		\big\rVert_{\ell^2(x)}\\
		=
		\big\lVert
		(1 - e^{-2\pi i \sprod{\xi}{u}}) (\Psi_{2^n}(\xi)-\Psi_{2^{n-1}}(\xi)) \eta_s(\xi) \hat{f}(\xi + a/q)
		\big\rVert_{L^2({\rm d}\xi)}
		\lesssim
		2^{-n/d}
		\cdot
		\norm{u}
		\cdot
		\big\lVert
		\eta_s(\cdot - a/q) \hat{f}
		\big\rVert_{L^2}
	\end{multline*}
	since, by \eqref{eq:48},
	$$
	\sup_{\xi \in \TT^d}
	\norm{\xi} \cdot \abs{(\Psi_{2^n}(\xi)-\Psi_{2^{n-1}}(\xi))}
	\lesssim
	\sup_{\xi \in \TT^d}
	\norm{\xi} \cdot \norm{2^{nA}\xi}^{-1/d}
	\leq 2^{-n/d}.
	$$
	Therefore,
	$$
	\big\lvert
	\lVert I(x, x+u) \rVert_{\ell^2(x)}
	- \lVert I(x,x) \rVert_{\ell^2(x)}
	\big\rvert
	\leq
	\norm{u}
	\sum_{n = 2^{\kappa_s}}^\infty
	2^{-n/d}
	\sum_{a/q \in \mathscr{U}_{(s+1)^l}\setminus\mathscr{U}_{s^l}}
	\lVert \eta_s(\cdot - a/q) \hat{f} \rVert_{\ell^2}
	$$
	because the set
        $\mathscr{U}_{(s+1)^l}\setminus\mathscr{U}_{s^l}\subseteq
        \mathscr{U}_{(s+1)^l}$ contains at most $e^{(d+1)(s+1)^{1/10}}$ elements and
	$$ 2^{\kappa_s}(\log 2)/d - (s+1)^{1/10}e^{(s+1)^{1/10}}-
        (d+1) (s+1)^{1/10} \geq  s$$ 
for sufficiently large $s\ge 0$. Thus we obtain
	$$
	\lVert I(x, x) \rVert_{\ell^2(x)} \lesssim
	\lVert I(x, x+u) \rVert_{\ell^2(x)} + 2^{-s } \lVert f \rVert_{\ell^2}.
	$$
	In particular,
	\[
		\Big\|V_r\Big(\mathcal
                F^{-1}\Big(\sum_{j=s+1}^n\nu_{2^j}^s\hat{f}\Big):
                n\ge 2^{\kappa_s}\Big)\Big\|_{\ell^2}^2
	\lesssim
	\frac{1}{Q_s^d}
	\sum_{u \in \NN_{Q_s}^d}
	\big\lVert I(x, x+u) \big\rVert_{\ell^2(x)}^2
	+
	2^{-2s }
	\lVert f \rVert_{\ell^2}^2.
	\]
	Let us observe that the functions $x \mapsto I(x, y)$ and $x \mapsto J(x, y)$ are
	$Q_s\ZZ^d$-periodic. Next, by double change of variables and periodicity we get
	$$
	\sum_{u \in \NN_{Q_s}^d}
	\lVert I(x, x+u) \rVert_{\ell^2(x)}^2
	=
	\sum_{x \in \ZZ^d}
	\sum_{u \in \NN_{Q_s}^d}
	I(x-u, x)^2
	=
	\sum_{x \in \ZZ^d}
	\sum_{u \in \NN_{Q_s}^d} I(u, x)^2
	=
	\sum_{u \in \NN_{Q_s}^d}
	\lVert I(u, x) \rVert_{\ell^2(x)}^2
	$$
	Using Proposition \ref{prop:10} and \eqref{eq:20}, we obtain
	\begin{multline*}
\sum_{u \in \NN_{Q_s}^d}
	\lVert I(u, x) \rVert_{\ell^2(x)}^2\lesssim 
		\sum_{u \in \NN_{Q_s}^d}
		\lVert
		J(u, x)
		\rVert_{\ell^2(x)}^2
		=
        \sum_{u \in \NN_{Q_s}^d}
		\lVert J(x, x+u) \rVert_{\ell^2(x)}^2
        \\
		=\sum_{u \in \NN_{Q_s}^d}
		\int_{\TT^d}\Big|\sum_{a/q\in\mathscr{U}_{(s+1)^l}\setminus\mathscr{U}_{s^l}}G(a/q) e^{2\pi i \sprod{(a/q)}{u}}
		\eta_s(\xi-a/q)
		\hat{f}(\xi)\Big|^2
		{\: \rm d}\xi
		\lesssim
		(s+1)^{-2\delta l}
		Q_s^d
		\cdot
		\|f\|_{\ell^2}^2.
	\end{multline*}
	In the last step we have also used the disjointness of supports of
	$\eta_s(\cdot - a/q)$ while $a/q$ varies over $\mathscr{U}_{(s+1)^l}\setminus\mathscr{U}_{s^l}$. Therefore,
	\[
		\Big\|V_r\Big(\mathcal
                F^{-1}\Big(\sum_{j=s+1}^n\nu_{2^j}^s\hat{f}\Big):
                n\ge 2^{\kappa_s}\Big)\Big\|_{\ell^2}
		\lesssim
		(s+1)^{-\delta l} 
		\vnorm{f}_{\ell^2}
	\]
	which together with \eqref{eq:132} concludes the proof.
\end{proof}
\subsection{$\ell^p(\ZZ^d)$ estimates for \eqref{eq:129}} Recall that for
$s\in\NN_0$ we have 
 $$\kappa_s = 20 d \big(\lfloor (s+1)^{1/10}\rfloor+1\big)$$ 
and 
$$Q_s = \big(\big\lfloor e^{(s+1)^{1/10}}\big\rfloor\big)!$$
as in the proof of Theorem \ref{thm:2}. We show that  for every
$p\in(1, \infty)$ and $r\in(2, \infty)$ there is a constant $C_{p, r}>0$ such that for every
$s\in\NN_0$ 
\begin{align}
\label{eq:133}
  \Big\|V_r\Big(\mathcal
  F^{-1}\Big(\sum_{j=s+1}^n\nu_{2^j}^s\hat{f}\Big): n\in\NN_0\Big)\Big\|_{\ell^p}
  \le C_{p, r}
  s\log(s+2)
  \|f\|_{\ell^p}.
\end{align}
Then interpolation \eqref{eq:133} with \eqref{eq:130} will immediately
imply \eqref{eq:129}. The proof of \eqref{eq:133} will consist of two
parts. We shall bound separately the variations when $0\le n\le 2^{\kappa_s}$
and when $n\ge 2^{\kappa_s}$, see Theorem \ref{thm:7} and Theorem
\ref{thm:8} respectively. 

\begin{theorem}
\label{thm:7}
Let $p\in(1, \infty)$ and $r\in(2, \infty)$ then there is a constant $C_{p, r}>0$
such that for every $s\in\NN_0$ and every $f\in\ell^p\big(\ZZ^d\big)$ we have
\[
  \Big\|V_r\Big(\mathcal
  F^{-1}\Big(\sum_{j=s+1}^n\nu_{2^j}^s\hat{f}\Big): 0\le n\le 2^{\kappa_s}\Big)\Big\|_{\ell^p}
  \le C_{p, r}
  s \log(s+2)
  \|f\|_{\ell^p}.
\]
\end{theorem}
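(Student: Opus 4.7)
The plan is to mimic the strategy used in Section 5 for averaging operators, in particular the dyadic decomposition in equation (1.16). Fix $s \in \NN_0$ and set $a_n = \sum_{j=s+1}^{n} \calF^{-1}(\nu_{2^j}^s \hat f)$ for $0 \le n \le 2^{\kappa_s}$. The idea is to apply Lemma \ref{lem:6} to $(a_n)$ to get
\[
V_r\big(a_n : 0\le n\le 2^{\kappa_s}\big)
\le
\sqrt{2}\sum_{i=0}^{\kappa_s}
\Big(\sum_{j=0}^{2^{\kappa_s-i}-1}
\big|a_{(j+1)2^i}-a_{j2^i}\big|^2\Big)^{1/2},
\]
so that, once we place $\|\cdot\|_{\ell^p}$ outside, each inner square function must be controlled by $C_p\log(s+2)\|f\|_{\ell^p}$ uniformly in $i$; summing over the $\kappa_s+1 \lesssim s+1$ values of $i$ yields the claimed $s\log(s+2)$ bound.

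The differences telescope through the kernel $\Psi_{2^m}-\Psi_{2^{m-1}}$, so $a_{(j+1)2^i}-a_{j2^i} = \calF^{-1}(\mu_{i,j}^{s}\hat f)$ where
\[
\mu_{i,j}^{s}(\xi)
=\sum_{a/q\in\mathscr U_{(s+1)^l}\setminus\mathscr U_{s^l}}
G(a/q)\bigl(\Psi_{2^{(j+1)2^i}}(\xi-a/q)-\Psi_{2^{j2^i}}(\xi-a/q)\bigr)\eta\bigl(2^{s(A-\chi I)}(\xi-a/q)\bigr).
\]
By Khinchine's inequality, bounding the square function on $\ell^p$ is equivalent to estimating, uniformly in the choice of signs $\varepsilon_j(t)\in\{-1,+1\}$, the $\ell^p$ norm of the operator with multiplier $\sum_j\varepsilon_j(t)\mu_{i,j}^{s}(\xi)$, which in turn has the Ionescu--Wainger form required by Theorem \ref{th:3}: it is the Fourier-analytic projection at scale $\mathscr U_{(s+1)^l}$ of the \emph{continuous} multiplier
\[
\Theta_i^t(\xi)=\sum_{j=0}^{2^{\kappa_s-i}-1}\varepsilon_j(t)\bigl(\Psi_{2^{(j+1)2^i}}(\xi)-\Psi_{2^{j2^i}}(\xi)\bigr).
\]
If one can verify the hypothesis $\|\calF^{-1}(\Theta_i^t\calF g)\|_{L^p(\RR^d)}\lesssim\|g\|_{L^p(\RR^d)}$ uniformly in $i$ and $t$, then Theorem \ref{th:3} delivers the $\log(s+2)$ loss per scale, which is precisely what is needed.

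The remaining step, which is the main obstacle, is precisely this continuous $L^p$ bound for $\Theta_i^t$. Undoing Khinchine, the question is the uniform $L^p(\RR^d)$ boundedness of the Littlewood--Paley style square function built from dyadic differences of the continuous truncated singular Radon transforms $\Psi_{2^m}$, which is a standard consequence of the $L^p$ theory of singular Radon transforms together with vector-valued extensions (as discussed in the Appendix and in \cite{mst1}); equivalently, one may dominate it by the continuous strong $r$-variational estimate for $r=2^+$, which is available in the continuous setting. Once this input is in hand, Theorem \ref{th:3} applied to each $\mu_{i,j}^{s}$-multiplier gives a bound $C_p\log(s+2)\|f\|_{\ell^p}$ for the $i$-th square function, and summing over $0\le i\le\kappa_s$ completes the proof with the constant $C_{p,r}\, s\log(s+2)$.
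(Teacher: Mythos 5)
The overall architecture you use — apply Lemma~\ref{lem:6}, dominate each fixed-scale $\ell^p$ square function by a Khinchine randomisation, recognise an Ionescu--Wainger projection, and invoke Theorem~\ref{th:3} together with the $L^p(\RR^d)$ theory for the continuous Radon multiplier $\Theta_i^t$ — is indeed the same strategy the paper uses, and the final ``$\kappa_s+1$ scales $\times\,\log(s+2)$ each'' count is correct. However, there is a genuine gap in the middle: your multiplier
\[
\sum_{j}\varepsilon_j(t)\,\mu_{i,j}^{s}(\xi)
=\sum_{a/q\in\mathscr U_{(s+1)^l}\setminus\mathscr U_{s^l}}
G(a/q)\,\Theta_i^t(\xi-a/q)\,\eta\bigl(2^{s(A-\chi I)}(\xi-a/q)\bigr)
\]
is \emph{not} of the form $\Delta_N(\xi)=\sum_{a/q\in\mathscr U_N}\Theta(\xi-a/q)\eta_N(\xi-a/q)$ required by Theorem~\ref{th:3}, because the Gaussian sum $G(a/q)$ is an $a/q$-dependent weight. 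Theorem~\ref{th:3} only transfers an $L^p(\RR^d)$ bound for a single continuous multiplier $\Theta$ translated to rational points; it says nothing about multipliers whose local piece is multiplied by a varying scalar. You acknowledge the hypothesis to be verified is the continuous bound for $\Theta_i^t$, but even granting that, the conclusion of Theorem~\ref{th:3} does not apply to $\sum_j\varepsilon_j(t)\mu_{i,j}^s$.

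The paper removes this obstruction \emph{before} invoking Lemma~\ref{lem:6} and Theorem~\ref{th:3}: it sets $J=\lfloor e^{(s+1)^{1/2}}\rfloor$, introduces the discrete averaging multiplier $\mu_J$, and uses Proposition~\ref{prop:0} to show $|\mu_J(\xi)-G(a/q)|\lesssim e^{-\frac12(s+1)^{1/2}}$ on the relevant neighbourhoods, hence
\[
\nu_{2^j}^s(\xi)=\mu_J(\xi)\,\Pi_{2^j}^s(\xi)+\mathcal O\bigl(e^{-\frac12(s+1)^{1/2}}\bigr),
\]
where $\Pi_{2^j}^s$ is the same object with the $G(a/q)$ factor stripped out. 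The error, after interpolation, is summable over the $\simeq 2^{\kappa_s}$ values of $j$ precisely because $J$ was chosen superexponentially large in $s^{1/10}$. Since $\mu_J$ is a probability measure multiplier (bounded on $\ell^p$ with constant $1$), one may then run your argument verbatim with $\Pi_{2^j}^s$ in place of $\nu_{2^j}^s$, and now Theorem~\ref{th:3} genuinely applies. This step — approximating the locally constant Gaussian-sum weight by a single convolution operator $\mu_J$ so that Theorem~\ref{th:3} becomes applicable — is the essential ingredient your proposal is missing, and without it the chain of reductions you describe does not close.
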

\begin{proof}
	Let $J=\big\lfloor e^{(s+1)^{1/2}}\big\rfloor$ and
        define the multiplier 
	$$
	\mu_J(\xi) = J^{-k} \sum_{y \in \NN_J^k} e^{2\pi i \sprod{\xi}{\calQ(y)}}
	$$
	where $\NN^k_J = \{1, 2, \ldots, J\}^k$. We see that $\mu_J$
        corresponds to the averaging operator,
        i.e. $M_Jf=\calF^{-1}\big(\mu_J\hat f\big)$. Thus
        for each $r \in [1, \infty]$
	we have
	\[
		\big\lVert
		\calF^{-1} \big(\mu_J \hat{f} \big)
		\big\rVert_{\ell^r}
		\leq
		\vnorm{f}_{\ell^r}.
	\]
	Moreover, if $\xi \in \TT^d$ is such that  $\norm{\xi_{\gamma} - a_{\gamma}/q}
        \leq 2^{-s(|\gamma|-\chi)}$  for every $\gamma\in\Gamma$ with some $1 \leq q \leq e^{(s+1)^{1/10}}$ and
	$a \in A_q$, then 
	\begin{equation*}
		\mu_J(\xi) = G(a/q)\Phi_J(\xi-a/q)+\mathcal O\big(e^{-\frac{1}{2}(s+1)^{1/2}}\big).
	\end{equation*}
	Indeed, by Proposition \ref{prop:0} with $L_1=2^s$,
        $L_2=2^{s\chi}$, $L_3=e^{(s+1)^{1/10}}$ and $N=J$ we see that the
        error term is dominated by
        \begin{align*}
			L_3/J+L_2L_3J^{-1}\sum_{\gamma \in \Gamma}\big(J/L_1\big)^{|\gamma|}	
			& \lesssim e^{(s+1)^{1/10}-(s+1)^{1/2}}+2^{s\chi}e^{(s+1)^{1/10}-(s+1)^{1/2}}(e^{(s+1)^{1/2}}\cdot2^{-s})\\
			& \lesssim e^{-\frac{1}{2}(s+1)^{1/2}}.
        \end{align*}
Therefore,
	\begin{equation}
	\label{eq:136}
		\big\lvert \mu_J(\xi) - G(a/q) \big\rvert 
		\lesssim\big|G(a/q)(\Phi_J(\xi-a/q)-1)\big|+e^{-\frac{1}{2}(s+1)^{1/2}}
		\lesssim e^{-\frac{1}{2}(s+1)^{1/2}}
	\end{equation}
since
\[
|\Phi_J(\xi-a/q)-1|\lesssim 
		 \big\lvert J^{A}(\xi-a/q) \big\rvert\lesssim e^{(s+1)^{1/2}}2^{-s(1-\chi)}.
\]
Let us define the multipliers
\[ 
  \Pi_{2^j}^s(\xi)=\sum_{a/q\in\mathscr U_{(s+1)^l}\setminus\mathscr
    U_{s^l}}
\big(\Psi_{2^j}(\xi-a/q)-\Psi_{2^{j-1}}(\xi-a/q)\big)\eta\big(2^{s(A-\chi I)}(\xi-a/q)\big)
\]
and observe that by \eqref{eq:136} we have
\begin{align}
  \label{eq:138}
  \nu_{2^j}^s(\xi)-\mu_J(\xi)\Pi_{2^j}^s(\xi)=\mathcal O\big(e^{-\frac{1}{2}(s+1)^{1/2}}\big).
\end{align}
By \eqref{eq:138} and Plancherel's theorem we have 
\begin{align}
\label{eq:140}
  \big\|\mathcal
  F^{-1}\big((\nu_{2^j}^s-\mu_J\Pi_{2^j}^s)\hat{f}\big)\big\|_{\ell^2}\lesssim e^{-\frac{1}{2}(s+1)^{1/2}}\|f\|_{\ell^2}
\end{align}
furthermore, for every $p\in(1, \infty)$ we obtain
\begin{align}
\label{eq:139}
\big\|\mathcal
  F^{-1}\big((\nu_{2^j}^s-\mu_J\Pi_{2^j}^s)\hat{f}\big)\big\|_{\ell^p}\lesssim
  |U_{(s+1)^l}|\|f\|_{\ell^p}\lesssim e^{(s+1)^{1/10}}\|f\|_{\ell^p}.  
\end{align}
Interpolating now \eqref{eq:140} with \eqref{eq:139} one has for some
$c_p>0$ that
\begin{align}
\label{eq:141}
\big\|\mathcal
  F^{-1}\big((\nu_{2^j}^s-\mu_J\Pi_{2^j}^s)\hat{f}\big)\big\|_{\ell^p}\lesssim
   e^{-c_p(s+1)^{1/2}}\|f\|_{\ell^p}.  
\end{align}
Thus by \eqref{eq:141} we obtain
\begin{multline*}
  \Big\|V_r\Big(\mathcal
  F^{-1}\Big(\sum_{j=s+1}^n(\nu_{2^j}^s-\mu_J\Pi_{2^j}^s)\hat{f}\Big): 0\le n\le 2^{\kappa_s}\Big)\Big\|_{\ell^p}\\
\lesssim \sum_{n=0}^{2^{\kappa_s}}
\big\|\mathcal
  F^{-1}\big((\nu_{2^n}^s-\mu_J\Pi_{2^n}^s)\hat{f}\big)\big\|_{\ell^p}
	\lesssim 2^{\kappa_s}e^{-c_p(s+1)^{1/2}}\|f\|_{\ell^p}\lesssim \|f\|_{\ell^p}
\end{multline*}
since $2^{\kappa_s}e^{-c_p(s+1)^{1/2}}\lesssim 1$. The proof of
Theorem \ref{thm:7} will be completed if we show
\[ 
\Big\|V_r\Big(\mathcal
  F^{-1}\Big(\sum_{j=s+1}^n\Pi_{2^j}^s\hat{f}\Big): 0\le n\le 2^{\kappa_s}\Big)\Big\|_{\ell^p}
\lesssim \kappa_{s}\log(s+2)\|f\|_{\ell^p}.
\]
  Appealing to inequality \eqref{eq:62} we see that
\[
\Big\|V_r\Big(\mathcal
  F^{-1}\Big(\sum_{j=s+1}^n\Pi_{2^j}^s\hat{f}\Big): 0\le n\le 2^{\kappa_s}\Big)\Big\|_{\ell^p}
\lesssim\sum_{i=0}^{\kappa_s}\bigg\|\Big(\sum_{j = 0}^{2^{\kappa_s-i}-1}
	\Big|
	\sum_{m \in I_j^i}
	\calF^{-1}\big(\Pi_{2^{m}}^s \hat{f}\big)
	\Big|^2
	\Big)^{1/2}\bigg\|_{\ell^p}
\]
where $I_j^i = (j 2^i, (j+1)2^i]$.
For each $i\in\{0, 1, \ldots, \kappa_s\}$  we have by
Khinchine's inequality that
\[
	\bigg\|\Big(\sum_{j = 0}^{2^{\kappa_s-i}-1}
	\Big|
	\sum_{m \in I_j^i}
	\calF^{-1}\big(\Pi_{2^{m}}^s \hat{f}\big)
	\Big|^2
	\Big)^{1/2}\bigg\|_{\ell^p}
\lesssim 
	\Big(\int_0^1\Big\|\sum_{j = 0}^{2^{\kappa_s-i}-1}
	\sum_{m \in I_j^i}\varepsilon_j(\omega)
	\calF^{-1}\big(\Pi_{2^{m}}^s \hat{f}\big)
	\Big\|_{\ell^p}^p{\: \rm d}\omega
	\Big)^{1/p}.
\]
It suffices to show that for every 
$i\in\{0, 1, \ldots, \kappa_s\}$ and $\omega\in[0, 1]$ we have 
\begin{align}
\label{eq:146}
\Big\|\sum_{j = 0}^{2^{\kappa_s-i}-1}
	\sum_{m \in I_j^i}\varepsilon_j(\omega)
	\calF^{-1}\big(\Pi_{2^{m}}^s \hat{f}\big)
	\Big\|_{\ell^p}\lesssim \log(s+2)\|f\|_{\ell^p}.  
\end{align}
For any sequence  
	$\varepsilon=\big(\varepsilon_j(\omega) : 0 \leq j < 2^{\kappa_s - i} \big)$ with $\varepsilon_j(\omega) \in \{-1, 1\}$,
	we consider the operator
	$$
	\calT_\varepsilon f 
	= \sum_{a/q \in \mathscr{U}_{(s+1)^l}\setminus \mathscr{U}_{s^l}} \calF^{-1}
	\big(\Theta(\cdot- a/q) \eta_s(\cdot- a/q) \hat{f}\big)
	$$
	with
	$$
	\Theta
	= \sum_{j = 0}^{2^{\kappa_s - i}-1} \varepsilon_j(\omega)
	\sum_{m \in I_j^i} (\Psi_{2^{m}}-\Psi_{2^{m-1}}).
	$$ 
We notice that the multiplier
        $\Theta$ corresponds to a continuous singular Radon transform.
        Thus $\Theta$ defines a bounded
	operator on $L^r\big(\RR^d\big)$ for any $r \in (1, \infty)$ with the bound independent of
	the sequence $\big(\varepsilon_j(\omega) : 0 \leq j \leq 2^{\kappa_s-i}\big)$
	(see \cite[Section 11]{bigs}). Hence, by Theorem \ref{th:3}
\[
\|\calT_\varepsilon f\|_{\ell^p}\lesssim \log(s+2)\|f\|_{\ell^p}
\]
and consequently we obtain \eqref{eq:146} and the proof of Theorem
\ref{thm:7} is completed. 
\end{proof}
For each $N \in \NN$ and $s\in\NN_0$ we define  multipliers
\[
	\Omega_N^s(\xi)
	=\sum_{a/q \in \mathscr{U}_{(s+1)^l}\setminus \mathscr{U}_{s^l}}
	G(a/q) \Theta_N(\xi - a/q) \vrho_s(\xi - a/q),
\]
where $\vrho_s(\xi)=\eta\big(Q_{s+1}^{3dA}\xi\big)$ and
$\big(\Theta_N: N\in\NN)$ is a sequence of multipliers on $\RR^d$
such that for  $p\in(1, \infty)$ and $r\in(2, \infty)$ there is a constant
$\mathbf B_{p, r}>0$ such that for every $f\in L^2\big(\RR^d\big)\cap
L^p\big(\RR^d\big)$ we have
\begin{align}
  \label{eq:148}
  \big\|V_r\big(\mathcal
  F^{-1}\big(\Theta_N\calF f\big): N\in\NN\big)\big\|_{L^p}\le \mathbf
  B_{p, r}\|f\|_{L^p}.
\end{align}
In fact the multipliers obeying \eqref{eq:148} have been discussed  in
the Appendix, see Theorem \ref{thm:21} in the context of truncated
singular integrals. Moreover, in this case  $\mathbf
  B_{p, r}\le\mathbf
  B_{p}\frac{r}{r-2}$ for some $\mathbf
  B_{p}>0$.

\begin{theorem}
	\label{th:1}
	Let $p \in (1, \infty)$ and $r\in(2, \infty)$ then there exists $C_{p}>0$ such that for any
	$s \in\NN_0$ and $f\in\ell^p\big(\ZZ^d\big)$ we have
    \[      
	\big\|V_r\big(\calF^{-1}\big(\Omega_N^s \hat{f}\big):
          N\in\NN\big)\big\|_{\ell^p}\le C_p\mathbf
          B_{p, r } \log(s+2)
	\|f\|_{\ell^p}.
	\]
\end{theorem}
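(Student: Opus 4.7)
The multiplier $\Omega_N^s$ is exactly of the Ionescu--Wainger form already addressed in Theorem \ref{th:3}, except that here we must control the $V_r$-norm in $N$ of a whole family $(\Theta_N)_{N\in\NN}$ rather than the pointwise supremum of a single multiplier, and we carry the Gauss-sum weights $G(a/q)$. The plan is therefore to re-run the proof of Theorem \ref{th:3} from \cite{mst1} verbatim with the scalar multiplier $\Theta$ and its hypothesis \eqref{eq:155} replaced throughout by the family $(\Theta_N)_{N\in\NN}$ together with its $V_r$-hypothesis \eqref{eq:148}. This substitution is legitimate because Proposition \ref{prop:10} is already stated in exactly that $V_r$-valued form with constant $\boldB_{p,r}$, and because the weight $|G(a/q)|\lesssim q^{-\delta}$ can only help.

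The key geometric observation is that every $q\in P_{(s+1)^l}$ satisfies $q\le e^{(s+1)^{1/10}}$ and hence divides $Q_{s+1}=\lfloor e^{(s+1)^{1/10}}\rfloor!$. Writing each $a/q=a'/Q_{s+1}$, the cut-off $\varrho_s=\eta(Q_{s+1}^{3dA}\cdot)$ is supported in a box of minimal side $\asymp Q_{s+1}^{-3d}\ll Q_{s+1}^{-1}$, so the bumps $\varrho_s(\cdot-a/q)$ entering $\Omega_N^s$ have pairwise disjoint supports. Consequently $\Omega_N^s$ splits, at the Fourier level, into a disjoint sum of modulated copies of $\Theta_N\varrho_s$ weighted by $G(a/q)$. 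Applying Proposition \ref{prop:10} with matrix $\mathcal R=Q_{s+1}^{3dA}$ (so that $h=Q_{s+1}^{3d}\ge 2Q_{s+1}^{d+1}$ for $d\ge 1$) converts, for each individual denominator and each residue $m\in\NN_{Q_{s+1}}^k$, the continuous $V_r$-bound $\boldB_{p,r}$ into a sampled discrete $V_r$-bound of the same strength, uniformly in $a/q$.

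The logarithmic factor $\log(s+2)$ arises from summing the sampled estimates over $a/q\in\mathscr U_{(s+1)^l}\setminus\mathscr U_{s^l}$, which is the combinatorial heart of the Ionescu--Wainger scheme as carried out in \cite{mst1}. One stratifies $\mathscr U_{(s+1)^l}$ according to the factorisations of its denominators into the prime blocks $\Pi_k(V)$ entering the construction of $P_{(s+1)^l}$, estimates each stratum by a square-function / Rademacher--Menshov inequality together with the Gauss-sum decay and the disjointness just discussed, and telescopes over the $O(\log(s+2))$ strata. Assembling these contributions produces the claimed bound $C_p\boldB_{p,r}\log(s+2)\|f\|_{\ell^p}$.

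The principal obstacle is bookkeeping rather than new analytic input: one must verify that every step in the proof of Theorem \ref{th:3} in \cite{mst1} is phrased in a sufficiently Banach-space-valued way that it accepts the multiplier family $(\Theta_N)_{N\in\NN}$ via its $V_r$-norm in place of a single $\Theta$ via its $L^p(\RR^d)$-norm. Since Proposition \ref{prop:10} is already formulated in exactly this generality, and all the intermediate linear manipulations (disjoint Fourier support, sampling, Rademacher--Menshov square functions) are insensitive to the particular norm used on the multiplier side, this reformulation goes through without essential change.
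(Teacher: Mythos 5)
Your proposal correctly identifies the two main actors — the sampling/transference principle of Proposition \ref{prop:10} and the Ionescu--Wainger machinery of Theorem \ref{th:3} — but the way you combine them diverges from the paper in a place where the divergence actually matters. You propose to re-run the entire proof of Theorem \ref{th:3} with the scalar multiplier $\Theta$ replaced by the family $(\Theta_N)_{N}$ measured in $V_r$, arguing that ``every step is phrased in a sufficiently Banach-space-valued way''; this is a substantial claim about a result the paper itself calls ``a deep result which uses the most sophisticated tools developed to date in this area,'' and you do not verify it. The paper avoids re-opening that black box entirely.

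The paper's actual route is to apply the transference principle \emph{first and only once per residue class} $m \in \NN_{Q_s}^d$, with the whole Gauss-sum modulated combination $F(\xi;m)=\sum_{a/q} G(a/q)\hat f(\xi+a/q)e^{-2\pi i (a/q)\cdot m}$ already folded into the replacement of $\hat f$. This uses the identity
$\calF^{-1}\big(\Theta_N (\cdot - a/q) \vrho_s(\cdot - a/q) \hat{f} \big)(Q_s x + m) = \calF^{-1}\big(\Theta_N \vrho_s \hat{f}(\cdot  + a/q)\big)(Q_sx + m) e^{-2\pi i (a/q)\cdot m}$, valid because each $q$ divides $Q_s$. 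A single invocation of Proposition \ref{prop:10} then strips away the $V_r$-family and leaves the purely scalar $\ell^p$ problem $\|\calF^{-1}(\tilde\Pi_s^G\hat f)\|_{\ell^p}\lesssim\log(s+2)\|f\|_{\ell^p}$ with $\tilde\Pi_s^G(\xi)=\sum_{a/q}G(a/q)\vrho_s(\xi-a/q)$. By contrast, you speak of applying the transference ``for each individual denominator,'' which would force a brute-force sum over the $\asymp e^{(d+1)(s+1)^{1/10}}$ fractions and destroy the logarithmic gain; and you do not record the crucial step of bundling the Gauss-sum weights inside $F(\cdot;m)$.

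You also omit the second key idea of the paper: having reduced to $\tilde\Pi_s^G$, the logarithm is \emph{not} obtained by re-running the Ionescu--Wainger stratification with Gauss-sum weights, but by the comparison $|\tilde\Pi_s^G(\xi)-\mu_J(\xi)\tilde\Pi_s(\xi)|\lesssim e^{-\frac12(s+1)^{1/2}}$ from \eqref{eq:136}, where $\tilde\Pi_s$ is the unweighted Ionescu--Wainger multiplier and $\mu_J$ is the $\ell^p$-contractive discrete averaging multiplier. This converts the Gauss-sum-weighted projection into a composition $\mu_J\circ\tilde\Pi_s$ plus a super-exponentially small error (handled by interpolation between $\ell^2$ Plancherel and the trivial $|\mathscr U_{(s+1)^l}|$ bound), so that scalar Theorem \ref{th:3} can be applied to $\tilde\Pi_s$ as a black box. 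The $\mu_J$-comparison is the mechanism that lets one recycle Theorem \ref{th:3} verbatim; your proposal contains no analogue of it, and without it the plan ``stratify $\mathscr U_{(s+1)^l}$ and telescope over $O(\log(s+2))$ strata'' is a gesture at redoing \cite{mst1}'s hardest proof rather than a step you have justified.
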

\begin{proof}
	Let us observe that
	$$
	\calF^{-1}\big(\Theta_N (\cdot - a/q) \vrho_s(\cdot - a/q) \hat{f} \big)(Q_s x + m)
	=
	\calF^{-1}\big(\Theta_N \vrho_s \hat{f}(\cdot  + a/q)\big)(Q_sx + m) e^{-2\pi i \sprod{(a/q)}{m}}.
	$$
	Therefore,
        \begin{align*}
          \big\|V_r\big(\calF^{-1}\big(\Omega_N^s \hat{f}\big):
          N\in\NN\big)\big\|_{\ell^p}^p
	=
	\sum_{m \in \NN_{Q_s}^d}
\big\|V_r\big(\calF^{-1}\big(\Theta_N \vrho_s F(\cdot\ ;
        m) \big)(Q_s x + m): N \in \NN\big)\big\|_{\ell^p(x)}^p
        \end{align*}
	where
	\begin{equation}
		\label{eq:151}
		F(\xi; m)
		= \sum_{a/q \in \mathscr{U}_{(s+1)^l}\setminus \mathscr{U}_{s^l}} G(a/q) \hat{f}(\xi + a/q) e^{-2\pi i \sprod{(a/q)}{m}}.
	\end{equation}
	Now, by Proposition \ref{prop:10} and  \eqref{eq:151} we get
	\begin{multline*}
		\sum_{m \in \NN_{Q_s}^d}
\big\|V_r\big(\calF^{-1}\big(\Theta_N \vrho_s F(\cdot\ ;
        m) \big)(Q_s x + m): N \in \NN\big)
	\big\|_{\ell^p(x)}^p\\
		\le C_p^p\mathbf B_{p, r}^p
		\sum_{m \in \NN_{Q_s}^d}
\big\|\calF^{-1}\big(\vrho_s F(\cdot\ ;
        m) \big)(Q_s x + m)
	\big\|_{\ell^p(x)}^p\\
=C_p^p\mathbf B_{p, r}^p
		\Big\lVert
		\sum_{a/q \in \mathscr{U}_{(s+1)^l}\setminus \mathscr{U}_{s^l}} G(a/q) \calF^{-1}\big(\vrho_s(\cdot - a/q) \hat{f}\big)
		\Big\rVert_{\ell^p}^p.
	\end{multline*}
It suffices to prove
\begin{align}
 \label{eq:152}
  \big\|\mathcal
  F^{-1}\big(\tilde{\Pi}_s^G\hat{f}\big)\big\|_{\ell^p}\lesssim \log(s+2)\|f\|_{\ell^p}
\end{align}
where
\[
  \tilde{\Pi}_s^G(\xi)=\sum_{a/q \in \mathscr{U}_{(s+1)^l}\setminus \mathscr{U}_{s^l}} G(a/q) \vrho_s(\xi - a/q). 
\]
Observe that arguing in a similar way as in the proof of Theorem
\ref{thm:7} we obtain by \eqref{eq:136} that
\begin{align}
\label{eq:154}
  |\tilde{\Pi}_s^G(\xi)-\mu_J(\xi)\tilde{\Pi}_s(\xi)|\lesssim e^{-\frac{1}{2}(s+1)^{1/2}}
\end{align}
where
\[
  \tilde{\Pi}_s(\xi)=\sum_{a/q \in \mathscr{U}_{(s+1)^l}\setminus \mathscr{U}_{s^l}} \vrho_s(\xi - a/q). 
\]
Therefore, \eqref{eq:154} combined with Plancherel's theorem yields
\begin{align}
\label{eq:158}
  \big\|\mathcal
  F^{-1}\big((\tilde{\Pi}_s^G-\mu_J\tilde{\Pi}_s)\hat{f}\big)\big\|_{\ell^2}\lesssim e^{-\frac{1}{2}(s+1)^{1/2}}\|f\|_{\ell^2}.
\end{align}
We can conclude by interpolation with \eqref{eq:158} that
\[
  \big\|\mathcal
  F^{-1}\big((\tilde{\Pi}_s^G-\mu_J\tilde{\Pi}_s)\hat{f}\big)\big\|_{\ell^p}\lesssim \|f\|_{\ell^p}.
\]
since by Theorem \ref{th:3} we have
\[ 
 \big\|\mathcal
  F^{-1}\big(\tilde{\Pi}_s\hat{f}\big)\big\|_{\ell^p}\lesssim \log(s+2)\|f\|_{\ell^p} 
\]
and the trivial bound
\[
  \big\|\mathcal
  F^{-1}\big(\tilde{\Pi}_s^G\hat{f}\big)\big\|_{\ell^p}\lesssim
  |\mathscr{U}_{(s+1)^l}|\|f\|_{\ell^p}
\lesssim e^{(d+1)(s+1)^{1/10}}\|f\|_{\ell^p}.
\]
This establishes the bound in \eqref{eq:152} and the proof of Theorem
\ref{th:1} is finished. 
\end{proof}

\begin{theorem}
\label{thm:8}
Let $p\in(1, \infty)$ and $r\in(2, \infty)$ then there is a constant $C_{p, r}>0$ such that for every
$s\in\NN_0$ and $f\in\ell^p\big(\ZZ^d\big)$ we have
\[
  \Big\|V_r\Big(\mathcal
  F^{-1}\Big(\sum_{j=s+1}^n\nu_{2^j}^s\hat{f}\Big): n\ge 2^{\kappa_s}\Big)\Big\|_{\ell^p}
  \le C_{p, r}
  \log(s+2)
  \|f\|_{\ell^p}.
\]
\end{theorem}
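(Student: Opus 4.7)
The plan is to reduce Theorem \ref{thm:8} to Theorem \ref{th:1}, choosing the continuous family $(\Theta_N)$ appropriately and then absorbing a cutoff-replacement error. First, by the telescoping identity I would rewrite
\[
\sum_{j=s+1}^n \nu_{2^j}^s(\xi)
= \sum_{a/q \in \mathscr{U}_{(s+1)^l} \setminus \mathscr{U}_{s^l}}
G(a/q)\, \Theta_n(\xi-a/q)\, \eta\big(2^{s(A-\chi I)}(\xi-a/q)\big),
\]
where $\Theta_N := \Psi_{2^N} - \Psi_{2^s}$. Since variations are translation-invariant in the family-parameter, the sequence $(\Theta_N)_{N\ge s+1}$ inherits from $(\Psi_{2^N})$ the continuous $r$-variational bound \eqref{eq:148}; this is precisely what Theorem \ref{thm:21} in the Appendix provides for truncated singular Radon multipliers, with $\mathbf B_{p,r}\le \mathbf B_p \frac{r}{r-2}$ independently of $s$. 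Thus the hypothesis of Theorem \ref{th:1} holds.

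Next I would introduce the auxiliary multiplier
\[
\Omega_N^s(\xi) = \sum_{a/q \in \mathscr{U}_{(s+1)^l} \setminus \mathscr{U}_{s^l}} G(a/q)\, \Theta_N(\xi-a/q)\, \vrho_s(\xi-a/q)
\]
of Theorem \ref{th:1}, which differs from $\sum_{j=s+1}^n \nu_{2^j}^s$ only in replacing the cutoff $\eta\big(2^{s(A-\chi I)}(\cdot-a/q)\big)$ by $\vrho_s(\cdot-a/q) = \eta\big(Q_{s+1}^{3dA}(\cdot-a/q)\big)$. Theorem \ref{th:1} then yields
\[
\big\|V_r\big(\calF^{-1}(\Omega_N^s\hat f) : N \ge 2^{\kappa_s}\big)\big\|_{\ell^p}
\lesssim \log(s+2)\,\|f\|_{\ell^p},
\]
which takes care of the main term.

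It remains to bound the error multiplier $E_n := \sum_{j=s+1}^n \nu_{2^j}^s - \Omega_n^s$, given by
\[
E_n(\xi) = \sum_{a/q} G(a/q)\, \Theta_n(\xi-a/q)\, \big(\eta(2^{s(A-\chi I)}(\xi-a/q)) - \vrho_s(\xi-a/q)\big).
\]
Using $V_r \le V_1$ I would dominate the variation by $\sum_{n\ge 2^{\kappa_s}} \|\calF^{-1}((E_{n+1}-E_n)\hat f)\|_{\ell^p}$. Note $E_{n+1}-E_n$ involves $\Psi_{2^{n+1}}-\Psi_{2^n}$, which by \eqref{eq:48} has modulus $\lesssim |2^{nA}(\xi-a/q)|_\infty^{-1/d}$, while on the support of the cutoff difference there exists $\gamma$ with $|\xi_\gamma - a_\gamma/q| \gtrsim Q_{s+1}^{-3d|\gamma|}$. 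Combining Plancherel with \eqref{eq:20} and the disjointness of supports produces an $\ell^2$ bound of order $(2^n/Q_{s+1}^{3d})^{-1/d}\|f\|_{\ell^2}$; interpolating with the trivial $\ell^p$ bound by $|\mathscr{U}_{(s+1)^l}|\lesssim e^{(d+1)(s+1)^{1/10}}$ yields a bound of type $C_p\, 2^{-c_p n/d}\|f\|_{\ell^p}$ which sums geometrically to something negligible, since $2^{\kappa_s}$ is calibrated so that $2^{2^{\kappa_s}}$ dwarfs $Q_{s+1}^{3d}$ (as already used in the proof of Theorem \ref{thm:2}).

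The main obstacle I expect is the last step: the $\ell^p$-error term requires an honest interpolation, and one must verify that the choice $\kappa_s = 20d(\lfloor(s+1)^{1/10}\rfloor+1)$ is large enough relative to $\log Q_{s+1}$ to beat the trivial bound on $|\mathscr{U}_{(s+1)^l}|$. Once this numerology is confirmed (it matches that of Theorem \ref{thm:2}), the combination of the main term from Theorem \ref{th:1} and the vanishing error term completes the proof.
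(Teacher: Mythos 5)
Your proposal is correct and follows essentially the same route as the paper: reduce to Theorem \ref{th:1} (the paper takes $\Theta_{2^n}=\Psi_{2^n}$ rather than your $\Psi_{2^n}-\Psi_{2^s}$, but the two choices give identical $r$-variations since they differ by a constant in $n$), then control the cutoff-replacement error $\nu_{2^n}^s-\tilde\Omega_{2^n}^s$ — which is your $E_n-E_{n-1}$ — by interpolating a Plancherel bound exploiting the decay \eqref{eq:48} outside the tighter cutoff against the trivial $|\mathscr U_{(s+1)^l}|$ bound, with the numerology of $\kappa_s$ making the resulting geometric sum negligible. The only cosmetic discrepancy is the citation: the paper derives \eqref{eq:148} for $(\Psi_{2^n}:n\in\NN_0)$ from the proof of Theorem \ref{thm:23}, not Theorem \ref{thm:21}, but this does not affect the argument.
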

\begin{proof}
The proof of Theorem \ref{thm:23} ensures that the sequence
$\big(\Psi_{2^n}: n\in\NN_0\big)$  satisfies \eqref{eq:148}.
Thus in view of Theorem \ref{th:1} which will be applied with $N=2^n$ and $\Theta_{2^n}=\Psi_{2^n}$  it only suffices to
prove that for any $n\ge 2^{\kappa_s}$ we have
\begin{align}
\label{eq:161}
  \big\|\mathcal
  F^{-1}\big((\nu_{2^n}^s-\tilde{\Omega}_{2^n}^s)\hat{f}\big)\big\|_{\ell^p}\lesssim 2^{-c_p^1n}e^{c_p^2(s+1)^{1/10}}\|f\|_{\ell^p}
\end{align}
for some $c_p^1, c_p^2>0$, where
\[
\tilde{\Omega}_{2^j}^s(\xi)=\sum_{a/q \in \mathscr{U}_{(s+1)^l}\setminus \mathscr{U}_{s^l}}
	G(a/q) \big(\Psi_{2^j}(\xi - a/q)-\Psi_{2^{j-1}}(\xi - a/q)\big) \vrho_s(\xi - a/q)
\]
and $\Omega_{2^n}=\sum_{j=1}^n\tilde{\Omega}_{2^j}^s$. Obviously we have 
\begin{align}
\label{eq:163}
  \big\|\mathcal
  F^{-1}\big((\nu_{2^n}^s-\tilde{\Omega}_{2^n}^s)\hat{f}\big)\big\|_{\ell^p}\lesssim|\mathscr{U}_{(s+1)^l}|\|f\|_{\ell^p}
\lesssim e^{(d+1)(s+1)^{1/10}}\|f\|_{\ell^p}.
\end{align}
  	Next, we observe that $\vrho_s(\xi - a/q) - \eta_s(\xi - a/q) \neq 0$ implies that 
	$\abs{\xi_\gamma - a_\gamma/q} \geq (16 d)^{-1} Q_{s+1}^{-3 d\abs{\gamma}}$ for some
	$\gamma \in \Gamma$. Therefore, for $n \geq 2^{\kappa_s}$ we have
	$$
	2^{n\abs{\gamma}} \cdot \big\lvert \xi_{\gamma} - a_{\gamma}/q \big\rvert
	\gtrsim
	2^{n|\gamma|} Q_{s+1}^{-3d|\gamma|} \gtrsim 2^{n/2},
	$$
since 
\[
2^{n/2}Q_{s+1}^{-3d}\ge
2^{2^{\kappa_s-1}}e^{-3d(s+1)^{1/10}e^{(s+1)^{1/10}}}\ge e^{(s+1)^{1/10}}
\]
for sufficiently large $s\in\NN_0$. Using \eqref{eq:48}, we obtain
	$$
	\sabs{\Psi_{2^n}(\xi - a/q)-\Psi_{2^{n-1}}(\xi - a/q)} \lesssim 2^{-n/(2d)}.
	$$
	Hence, by \eqref{eq:20}
	\[
		\Big\lvert
		\sum_{a/q \in \mathscr{U}_{(s+1)^l}\setminus \mathscr{U}_{s^l}}
		G(a/q)\big(
		\Psi_{2^n}(\xi - a/q)-\Psi_{2^{n-1}}(\xi - a/q)\big) \big(\eta_s(\xi - a/q) - \vrho_s(\xi - a/q) \big)
		\Big\rvert
		\leq
		C
		(s+1)^{-\delta l}
		2^{-n/(2d)}.
	\]
	Thus, by Plancherel's theorem we obtain
        \begin{align}
          \label{eq:162}
          \big\|\mathcal
  F^{-1}\big((\nu_{2^n}^s-\Omega_{2^n}^s)\hat{f}\big)\big\|_{\ell^2}\lesssim
  2^{-n/(2d)}(s+1)^{-\delta l}\|f\|_{\ell^2}.
        \end{align}
Interpolating now \eqref{eq:162} with \eqref{eq:163} we obtain
\eqref{eq:161} and this completes the proof of Theorem \ref{thm:8}.
\end{proof}

\section{Short variation estimates for truncated singular integral operators}
\label{sec:7}
According to \eqref{eq:32} and  the estimates for long variations from
the previous section it remains to  prove that for all $p\in(1, \infty)$ there is $C_p>0$
such that for all $f\in\ell^p\big(\ZZ^d\big)$ with finite support we have
\[ 
  \bigg\|\Big(\sum_{n\ge0} V_2\big(T_{N}f: N\in[2^n,
  2^{n+1})\big)^2\Big)^{1/2}\bigg\|_{\ell^p}\le C_p\|f\|_{\ell^p}.
\]
Using multipliers $\Xi_n$ from \eqref{eq:53} observe that
\begin{multline}
  \label{eq:72}
   \bigg\|\Big(\sum_{n\ge0} V_2\big(T_{N}f: N\in[2^n,
  2^{n+1})\big)^2\Big)^{1/2}\bigg\|_{\ell^p}\\
\le 
  \bigg\|\Big(\sum_{n\ge0} V_2\big((T_{N}-T_{2^n})\mathcal
  F^{-1}\big(\Xi_{n}\hat{f}\big): N\in[2^n,
  2^{n+1})\big)^2\Big)^{1/2}\bigg\|_{\ell^p}\\
+ 
  \bigg\|\Big(\sum_{n\ge0} V_2\big((T_{N}-T_{2^n})\mathcal
  F^{-1}\big((1-\Xi_{n})\hat{f}\big): N\in[2^n,
  2^{n+1})\big)^2\Big)^{1/2}\bigg\|_{\ell^p}.
\end{multline}
\subsection{The estimate of the second norm in \eqref{eq:72}}
We may assume without of
loss of generality, that $1<
r\le \min\{2, p\}$, since $r$-variations are decreasing,  and it suffices to show that
\begin{align}
  \label{eq:76}
  \big\|V_r\big((T_{N}-T_{2^n})\mathcal
  F^{-1}\big((1-\Xi_{n})\hat{f}\big): N\in[2^n,
  2^{n+1})\big)\big\|_{\ell^p}\lesssim (n+1)^{-2}\|f\|_{\ell^p}.
\end{align}
For this purpose we shall use \eqref{eq:39}. Namely, by \eqref{eq:39}
we immediately see that
\begin{align}
\label{eq:77}
  \big\| V_r\big((T_{N}-T_{2^n})\mathcal
  F^{-1}\big((1-\Xi_{n})\hat{f}\big): N\in[2^n,
  2^{n+1})\big)\big\|_{\ell^p}
\lesssim\max\big\{\mathbf U_p, 2^{n/r}\mathbf U_p^{1-1/r}\mathbf V_p^{1/r}\big\}
\end{align}
where 
\begin{align*}
  \mathbf U_p=\sup_{2^n\le N\le 2^{n+1}}\big\|(T_{N}-T_{2^n})\mathcal
  F^{-1}\big((1-\Xi_{n})\hat{f}\big)\big\|_{\ell^p}
\end{align*}
and 
\begin{align*}
\mathbf V_p=  \sup_{2^n\le N< 2^{n+1}}\big\|(T_{N+1}-T_{N})\mathcal
  F^{-1}\big((1-\Xi_{n
})\hat{f}\big)\big\|_{\ell^p}.
\end{align*}
In view of \eqref{eq:55} we see that
\begin{align}
  \label{eq:81}
 \mathbf U_p\lesssim \log(n+2)\|f\|_{\ell^p} \quad \text{and} \quad
  \mathbf V_p\lesssim 2^{-n}\log(n+2)\|f\|_{\ell^p}.
\end{align}
In fact arguing as in Section \ref{sec:5} we can prove, using Theorem,
\ref{thm:3} that   for big enough $\alpha>0$, which will be specified
later, and for all $n\in\NN_0$ and $N\simeq 2^n$ we have
\begin{align}
\label{eq:85}
\mathbf U_2\lesssim(1+n)^{-\alpha} \log(n+2)\|f\|_{\ell^2}.
\end{align}
Interpolating \eqref{eq:85} with \eqref{eq:81} we obtain 
\begin{align}
\label{eq:87}
\mathbf U_p\lesssim(1+n)^{-c_p\alpha} \log(n+2)\|f\|_{\ell^p}.
\end{align}
for some $c_p>0$. Choosing $\alpha>0$ and $l\in\NN$ appropriately
large we see that \eqref{eq:87} combined with \eqref{eq:77} easily
imply \eqref{eq:76}.

\subsection{The estimate of the first  norm in \eqref{eq:72}}
 By Proposition \ref{prop:2} if  $\xi\in\TT^d$ is such that
\[
\bigg|\xi_{\gamma}-\frac{a_{\gamma}}{q}\bigg|\le \frac{1}{8d\cdot2^{n(|\gamma|-\chi)}}
\] 
for every $\gamma\in \Gamma$
with $1\le q\le e^{n^{1/10}}$ then we have 
\begin{align}
\label{eq:88}
  m_N(\xi)-m_{2^n}(\xi)=G(a/q)\big(\Psi_N(\xi-a/q)-\Psi_{2^n}(\xi-a/q)\big)+q^{-\delta}E_{2^n}(\xi)
\end{align}
for every $N\simeq 2^n$,
where 
\begin{align}
\label{eq:89}
  |E_{2^n}(\xi)|\lesssim 2^{-n/2}.
\end{align}
Let us introduce for every $j, n\in\NN_0$ the new multipliers
\[
  \Xi_{n}^j(\xi)=\sum_{a/q\in\mathscr U_{n^l}}\eta\big(2^{nA+jI}(\xi-a/q)\big)
\]
and note that
\begin{multline*}
    \Big\|\Big(\sum_{n\ge0}V_2\big((T_{N}-T_{2^n})\mathcal
  F^{-1}\big(\Xi_{n}\hat{f}\big): N\in[2^n,
  2^{n+1})\big)^2\Big)^{1/2}\Big\|_{\ell^p}\\
\le \Big\|\Big(\sum_{n\ge0} V_2\big((T_{N}-T_{2^n})\mathcal
  F^{-1}\Big(\sum_{-\chi n\le j< n}\big(\Xi_{n}^j-\Xi_{n}^{j+1}\big)\hat{f}\Big): N\in[2^n,
  2^{n+1})\big)^2\Big)^{1/2}\Big\|_{\ell^p}\\
+\Big\|\Big(\sum_{n\ge0} V_2\big((T_{N}-T_{2^n})\mathcal
  F^{-1}\big(\Xi_{n}^n\hat{f}\big): N\in[2^n,
  2^{n+1})\big)^2\Big)^{1/2}\Big\|_{\ell^p}=I_p^1+I_p^2.
\end{multline*}
We will estimate $I_p^1$ and $I_p^2$ separately. 
First, observe that by \eqref{eq:88} and \eqref{eq:89},
for any $N\simeq 2^n$ and any $a/q\in\mathscr U_{n^l}$ we have
\begin{align}
\label{eq:101}
	\big|
	m_{N}(\xi)-m_{2^n}(\xi)
	\big|
	&\lesssim
	q^{-\delta} \big|\Psi_N(\xi - a/q) - \Psi_{2^n}(\xi - a/q)\big|
	+ q^{-\delta} 2^{-n/2}\\
\nonumber&\lesssim q^{-\delta}\big(\min\big\{1, |2^{nA}(\xi-a/q)|_{\infty},
|2^{nA}(\xi-a/q)|_{\infty}^{-1/d}\big\}+2^{-n/2}\big)  
\end{align}
where the last bound follows from \eqref{eq:48} and \eqref{eq:49}. 
Consequently  \eqref{eq:101}  implies
\begin{align}
  \label{eq:92}
  \big|\big(m_{N}(\xi)-m_{2^n}(\xi)\big)
\big(\eta\big(2^{nA+jI}(\xi-a/q)\big)-\eta\big(2^{nA+(j+1)I}(\xi-a/q)\big)\big)\big|
\lesssim q^{-\delta}\big(2^{-|j|/d}+2^{-n/2}\big).
\end{align}

We begin with bounding $I_p^2$. Since $r$-variations are decreasing
we can assume that $1<r\le \min\{2, p\}$ and it will suffice to show, for some
$\varepsilon=\varepsilon_{p, r}>0$, that
\begin{align}
\label{eq:93}
  \big\|V_r\big((T_{N}-T_{2^n})\mathcal
  F^{-1}\big(\Xi_{n}^{n}\hat{f}\big): N\in[2^n,
  2^{n+1})\big)\big\|_{\ell^p}\lesssim 2^{-\varepsilon
    n}\|f\|_{\ell^p}.
\end{align} 
Exploiting  \eqref{eq:39} we have
\[
  \big\|V_r\big((T_{N}-T_{2^n})\mathcal
  F^{-1}\big(\Xi_{n}^{n}\hat{f}\big): N\in[2^n,
  2^{n+1})\big)\big\|_{\ell^p}
\lesssim\max\big\{\mathbf U_p, 2^{n/r}\mathbf U_p^{1-1/r}\mathbf V_p^{1/r}\big\}
\]
where 
\begin{align*}
  \mathbf U_p=\sup_{2^n\le N\le 2^{n+1}}\big\|(T_{N}-T_{2^n})\mathcal
  F^{-1}\big(\Xi_{n}^{n}\hat{f}\big)\big\|_{\ell^p}
\end{align*}
and 
\begin{align*}
\mathbf V_p=  \sup_{2^n\le N< 2^{n+1}}\big\|(T_{N+1}-T_{N})\mathcal
  F^{-1}\big(\Xi_{n}^{n}\hat{f}\big)\big\|_{\ell^p}.
\end{align*}
In view of Theorem \ref{th:3} we see that
\begin{align}
\label{eq:96}
 \mathbf U_p\lesssim \log(n+2)\|f\|_{\ell^p} \quad \text{and} \quad
  \mathbf V_p\lesssim 2^{-n}\log(n+2)\|f\|_{\ell^p}.
\end{align}
 For $p=2$ by Plancherel's theorem and  \eqref{eq:101} we obtain
 \begin{multline}
\label{eq:97}
   \big\|(T_{N}-T_{2^n})\mathcal
  F^{-1}\big(\Xi_{n}^{n}\hat{f}\big)\big\|_{\ell^2}\\
=
\bigg(\int_{\TT^d}\sum_{a/q\in\mathscr U_{n^l}}|m_{N}(\xi)-m_{2^n}(\xi)|^2\eta\big(2^{nA+nI}(\xi-a/q)\big)^2
  |\hat{f}(\xi)|^2 {\: \rm d}\xi\bigg)^{1/2}\lesssim 
2^{-n/(2d)}\|f\|_{\ell^2}.
 \end{multline}
Therefore, interpolating \eqref{eq:96} with \eqref{eq:97} we obtain
for every $p\in(1, \infty)$ that 
\begin{align*}
  \mathbf U_p\lesssim 2^{-\varepsilon n}\|f\|_{\ell^p}
\end{align*}
which in turn implies \eqref{eq:93} and $I_p^2\lesssim \|f\|_{\ell^p}$.

We shall now estimate $I_p^1$, for this purpose,  for any $0\le s< n$, we  define 
\[
  \Delta_{n, s}^j(\xi)=\sum_{a/q\in\mathscr
    U_{(s+1)^l}\setminus\mathscr
    U_{s^l}}\big(\eta\big(2^{nA+jI}(\xi-a/q)\big)-\eta\big(2^{nA+(j+1)I}(\xi-a/q)\big)\big)\eta\big(2^{s(A-\chi
  I)}(\xi-a/q)\big)
\]
hence
\begin{align*}
  \Xi_{n}^j(\xi)-\Xi_{n}^{j+1}(\xi)=\sum_{0\le s<n}\Delta_{n, s}^j(\xi).
\end{align*}
Now we see that
\begin{multline*}
  I_p^1=\Big\|\Big(\sum_{n\ge0} V_2\big((T_{N}-T_{2^n})\mathcal
  F^{-1}\Big(\sum_{-\chi n\le j< n}\sum_{0\le s<n}\Delta_{n, s}^j\hat{f}\Big): N\in[2^n,
  2^{n+1})\big)^2\Big)^{1/2}\Big\|_{\ell^p}\\
\le\sum_{s\ge 0}\sum_{j\in\ZZ}\Big\|\Big(\sum_{n\ge \max\{s, j, -j/\chi\}} V_2\big((T_{N}-T_{2^n})\mathcal
  F^{-1}\big(\Delta_{n, s}^j\hat{f}\big): N\in[2^n,
  2^{n+1})\big)^2\Big)^{1/2}\Big\|_{\ell^p}.
\end{multline*}
The task now is to show that for some $\varepsilon_p>0$ 
\begin{align}
 \label{eq:108}
  J_p=\bigg\|\Big(\sum_{n\ge \max\{s, j, -j/\chi\}} V_2\big((T_{N}-T_{2^n})\mathcal
  F^{-1}\big(\Delta_{n, s}^j\hat{f}\big): N\in[2^n,
  2^{n+1})\big)^2\Big)^{1/2}\bigg\|_{\ell^p}\lesssim
  s^{-2}2^{-\varepsilon_p|j|}\|f\|_{\ell^p}.
\end{align}
    To estimate \eqref{eq:108} we will use
\eqref{eq:1} from Lemma \ref{lem:8} with $r=2$. Namely, 
let $h_{j, s}=2^{\varepsilon |j|}(s+1)^{\tau}$ with some $\varepsilon>0$ and $\tau>2$
which we choose later. Then for some
$2^n\le t_0<t_1<\ldots<t_{h}<2^{n+1}$ such that $t_{v+1}-t_v\simeq
2^n/h$ where $h=\min\{h_{j, s}, 2^n\}$ we have
\begin{multline}
\label{eq:109}
  J_p\lesssim \bigg\|\Big(\sum_{n\ge \max\{s, j, -j/\chi\}}\sum_{v=0}^{h}\big|(T_{2^{t_v}}-T_{2^n})\mathcal
  F^{-1}\big(\Delta_{n, s}^j\hat{f}\big)\big|^2\Big)^{1/2}\bigg\|_{\ell^p}\\
+\bigg\|\bigg(\sum_{n\ge \max\{s, j, -j/\chi\}}\sum_{v=0}^{h-1}\Big(\sum_{u=t_v}^{t_{v+1}-1}\big|(T_{u+1}-T_{u})\mathcal
  F^{-1}\big(\Delta_{n, s}^j\hat{f}\big)\big|\Big)^2\bigg)^{1/2}\bigg\|_{\ell^p}=J_p^1+J_p^2.
\end{multline}
\subsubsection{The estimates for $J_p^1$}  We begin with $p=2$ and
show that
\begin{align}
 \label{eq:149}
J_2^1\lesssim 2^{-|j|(1/(2d)-\varepsilon/2)}(s+1)^{-\delta l+\tau/2}\|f\|_{\ell^2}.  
\end{align}
For the simplicity of notation define
\[
\varrho_{n, j}(\xi)=\big(\eta\big(2^{nA+jI}\xi\big)-
\eta\big(2^{nA+(j+1)I}\xi\big)\big)
\eta\big(2^{s(A-\chi I)}\xi\big).
\] 
By Plancherel's theorem and \eqref{eq:92}  we have
\begin{multline*}
  J_2^1=\Big(\sum_{v=0}^{h}\int_{\TT^d}\sum_{a/q\in\mathscr
    U_{(s+1)^l}\setminus\mathscr
    U_{s^l}}\sum_{n\ge \max\{s, j, -j/\chi\}}|m_{2^{t_v}}(\xi)-m_{2^n}(\xi)|^2
\varrho_{n,j}(\xi-a/q)^2|\hat{f}(\xi)|^2 {\: \rm d} \xi\Big)^{1/2}\\
\lesssim 
\bigg(\sum_{v=0}^{h}2^{-|j|/d}\int_{\TT^d}\sum_{a/q\in\mathscr
    U_{(s+1)^l}\setminus\mathscr
    U_{s^l}}q^{-2\delta}\eta\big(2^{s(A-\chi I)}(\xi-a/q)\big))|\hat{f}(\xi)|^2d\xi\bigg)^{1/2}\\
\lesssim h^{1/2}2^{-|j|/(2d)}\Big(\int_{\TT^d}(s+1)^{-2\delta l}\sum_{a/q\in\mathscr
    U_{(s+1)^l}\setminus\mathscr
    U_{s^l}}\eta\big(2^{s(A-\chi I)}(\xi-a/q)\big)|\hat{f}(\xi)|^2 {\: \rm d}\xi\Big)^{1/2}\\
\lesssim h^{1/2}2^{-|j|/(2d)}(s+1)^{-\delta l}\|f\|_{\ell^2}
\end{multline*}
as desired. We have used the fact that $q\gtrsim (s+1)^l$ whenever
 $a/q\in\mathscr U_{(s+1)^l}\setminus\mathscr U_{s^l}$ 
 and 
	\[
  \sum_{n\ge \max\{s, j, -j/\chi\}}\varrho_{n, j}(\xi)\lesssim
\eta\big(2^{s(A-\chi I)}(\xi-a/q)\big)
\]
and the disjointness of $\eta\big(2^{s(A-\chi I)}( \cdot-a/q)\big)$
while $a/q$ varies over $\mathscr U_{(s+1)^k}\setminus\mathscr
U_{s^k}$.

Moreover, for any  $p\in(1, \infty)$ we have
\begin{align}
\label{eq:125}
  J_p^1\lesssim 2^{\varepsilon |j|/2}(s+1)^{\tau/2}\log(s+2)\|f\|_{\ell^p}.
\end{align}
Indeed, 
 appealing to the vector-valued
inequality for the maximal function corresponding to the averaging
operators from \cite{mst1} we see that  
\begin{multline*}
  J_p^1=\bigg\|\Big(\sum_{n\ge \max\{s, j, -j/\chi\}}\sum_{v=0}^{h}\big|(T_{2^{t_v}}-T_{2^n})\mathcal
  F^{-1}\big(\Xi_{n,
    s}^j\hat{f}\big)\big|^2\Big)^{1/2}\bigg\|_{\ell^p}\\
\lesssim h^{1/2}
\bigg\|\Big(\sum_{n\ge \max\{s, j, -j/\chi\}}\sup_{N\in\NN}M_{N}\big(\big|\mathcal
  F^{-1}\big(\Xi_{n,
    s}^j\hat{f}\big)\big|\big)^2\Big)^{1/2}\bigg\|_{\ell^p}\\
\lesssim h^{1/2}
\bigg\|\Big(\sum_{n\ge \max\{s, j, -j/\chi\}}\big|\mathcal
  F^{-1}\big(\Xi_{n,
    s}^j\hat{f}\big)\big|^2\Big)^{1/2}\bigg\|_{\ell^p}\lesssim h^{1/2}\log(s+2)\|f\|_{\ell^p},
\end{multline*}
where in the last step we have used \eqref{eq:83}. Interpolating now
\eqref{eq:125} with better \eqref{eq:149} estimate we obtain for some $\varepsilon_p>0$
that
\[
  J_p^1\lesssim (s+1)^{-2}2^{-\varepsilon_p |j|}\|f\|_{\ell^p}.
\]
The proof of \eqref{eq:109} will be completed if we obtain the same
kind of bound for $J_p^2$.
\subsubsection{The estimates for $J_p^2$} 
We begin with $p=2$ and our aim will be to show 
\begin{align}
\label{eq:164}
J_2^2\lesssim 2^{-\varepsilon
  |j|/2}(s+1)^{-\tau/2}\|f\|_{\ell^2}.  
\end{align}
Since $t_{v+1}-t_v\simeq 2^n/h$ then by the Cauchy--Schwarz inequality
we obtain
\[
  J_2^2\le \bigg\|\bigg(\sum_{n\ge \max\{s, j, -j/\chi\}}2^n/h\sum_{u=2^n}^{2^{n+1}-1}\big|(T_{u+1}-T_{u})\mathcal
  F^{-1}\big(\Xi_{n,
    s}^j\hat{f}\big)\big|^2\bigg)^{1/2}\bigg\|_{\ell^2}.
\]
By \eqref{eq:92} we have for $u\simeq2^n$ that
\begin{align}
\label{eq:166}
  |m_{u+1}(\xi)-m_{u}(\xi)|\lesssim \min\big\{2^{-n}, q^{-\delta}(2^{-|j|/d}+2^{-n/2})\big\}.
\end{align}
Two cases must be distinguished. Assume now that $h=2^{\varepsilon
  |j|}(s+1)^{\tau}$, therefore, again by Plancherel's theorem,  we obtain 
\begin{multline*}
  J_2^2\le\Big(\sum_{n\ge \max\{s, j, -j/\chi\}}\int_{\TT^d}2^n/h\sum_{u=2^n}^{2^{n+1}-1}\sum_{a/q\in\mathscr
    U_{(s+1)^k}\setminus\mathscr
    U_{s^k}}|m_{u+1}(\xi)-m_{u}(\xi)|^2
\varrho_{n,j}(\xi-a/q)^2|\hat{f}(\xi)|^2 {\: \rm d}\xi\Big)^{1/2}\\
\lesssim
h^{-1/2}\Big(\int_{\TT^d}\sum_{a/q\in\mathscr
    U_{(s+1)^k}\setminus\mathscr
    U_{s^k}}\sum_{n\ge \max\{s, j, -j/\chi\}}
\varrho_{n,j}(\xi-a/q)|\hat{f}(\xi)|^2 {\: \rm d}\xi\Big)^{1/2}\lesssim h^{-1/2}\|f\|_{\ell^2}
\end{multline*}
since by the telescoping nature and the disjointness of supports  when $a/q$ varies over $\mathscr U_{(s+1)^k}\setminus\mathscr
  U_{s^k}$ we have
\[
\sum_{a/q\in\mathscr U_{(s+1)^k}\setminus\mathscr
  U_{s^k}}\sum_{n\ge \max\{s, j, -j/\chi\}} \varrho_{n,j}(\xi-a/q)\lesssim \sum_{a/q\in\mathscr U_{(s+1)^k}\setminus\mathscr
  U_{s^k}}\eta\big(2^{s(A-\chi I)}(\xi-a/q)\big)\lesssim 1.
\]
If $h=2^n$ then by \eqref{eq:166} we get
\begin{multline*}
  J_2^2\le\Big(\sum_{n\ge \max\{s, j, -j/\chi\}}\int_{\TT^d}\sum_{u=2^n}^{2^{n+1}-1}\sum_{a/q\in\mathscr
    U_{(s+1)^k}\setminus\mathscr
    U_{s^k}}|m_{u+1}(\xi)-m_{u}(\xi)|^2
\varrho_{n,j}(\xi-a/q)^2|\hat{f}(\xi)|^2 {\: \rm d}\xi\Big)^{1/2}\\
\lesssim
2^{-|j|/4}2^{-s/4}\Big(\int_{\TT^d}\sum_{a/q\in\mathscr
    U_{(s+1)^k}\setminus\mathscr
    U_{s^k}}\sum_{n\ge \max\{s, j, -j/\chi\}}
\varrho_{n,j}(\xi-a/q)|\hat{f}(\xi)|^2 {\: \rm d}\xi\Big)^{1/2} \\
\lesssim 2^{-|j|/4}2^{-s/4}\|f\|_{\ell^2}
\end{multline*}
and \eqref{eq:164} is proven.

For $p\in(1, \infty)$ we shall prove that 
\begin{align}
\label{eq:169}
  J_p^2\lesssim \log(s+2)\|f\|_{\ell^p}. 
\end{align}
Indeed, 
\begin{multline*}
  J_p^2=\bigg\|\bigg(\sum_{n\ge \max\{s, j, -j/\chi\}}\sum_{v=0}^{h-1}\Big(\sum_{u=t_v}^{t_{v+1}-1}\big|(T_{u+1}-T_{u})\mathcal
  F^{-1}\big(\Xi_{n,
    s}^j\hat{f}\big)\big|\Big)^2\bigg)^{1/2}\bigg\|_{\ell^p}\\
\le\bigg\|\bigg(\sum_{n\ge \max\{s, j, -j/\chi\}}\Big(\sum_{u=2^n}^{2^{n+1}-1}\big|(H_{u+1}-H_{u})\big|*\big(\big|\mathcal
  F^{-1}\big(\Xi_{n,
    s}^j\hat{f}\big)\big|\big)\Big)^2\bigg)^{1/2}\bigg\|_{\ell^p}\\
  \lesssim\bigg\|\bigg(\sum_{n\ge \max\{s, j, -j/\chi\}}\sup_{N\in\NN}M_{N}\big(\big|\mathcal
  F^{-1}\big(\Xi_{n,
    s}^j\hat{f}\big)\big|\big)^2\bigg)^{1/2}\bigg\|_{\ell^p}\\
\lesssim \bigg\|\bigg(\sum_{n\ge \max\{s, j, -j/\chi\}}\big|\mathcal
  F^{-1}\big(\Xi_{n,
    s}^j\hat{f}\big)\big| ^2\bigg)^{1/2}\bigg\|_{\ell^p}\lesssim \log(s+2)\|f\|_{\ell^p},
\end{multline*}
where in the penultimate line we have used vector-valued maximal
estimates corresponding to the averaging operators from \cite{mst1}
and in the last line we invoked \eqref{eq:83}.  Interpolating now
the estimate \eqref{eq:169} with the estimate from \eqref{eq:164} we
obtain for some $\varepsilon_p>0$ that
\[
  J_p^2\lesssim (s+1)^{-2}2^{\varepsilon_p j}\|f\|_{\ell^p}
\]
and the proof of \eqref{eq:109} is completed.

\section{Proof of Theorem \ref{thm:100}}
\label{sec:9}
The definition of $r$-variations for any $r\in[1, \infty)$ can be extended to more general
sets. Namely, let $A\subseteq\RR$ and let us  define for each  $\big(\seq{a_t}{t \in
  A}\big)\subseteq\CC$, the $r$-variational seminorm by setting
$$
\var{\big(\seq{a_t}{t \in A}\big)} = \sup_{\atop{t_0 < t_1 < \ldots < t_J}{t_j \in A}}
	\Big(\sum_{j = 1}^J \abs{a_{t_j} - a_{t_{j-1}}}^r \Big)^{1/r}
$$
where the supremum is taken over all finite increasing sequences 
 $t_0 < t_1 < \ldots < t_J$ and $t_j\in A$ for $0\le j\le J$.

\begin{lemma}
  \label{lem:200}
Assume that $r\in[1, \infty)$ and  $\big(\seq{a_t}{t \in
  A}\big)\subseteq\CC$. Given an increasing sequence of real numbers $\big(w_k: k\in\NN\big)$
we have
\begin{align}
\label{eq:35}
V_r\big(a_t: t>0\big)\lesssim_r  V_r\big(a_{w_k}: k\in\NN
\big)+\Big(\sum_{k\in\NN}V_r\big(a_t: t\in [w_k, w_{k+1})\big)^2\Big)^{1/2}.
\end{align}

\end{lemma}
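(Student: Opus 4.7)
The plan is to fix a finite increasing sequence $t_0<t_1<\cdots<t_J$ of positive reals, bound $\sum_{j=1}^J|a_{t_j}-a_{t_{j-1}}|^r$ by the $r$-th power of the right-hand side of \eqref{eq:35} (up to an $r$-dependent constant), and then take the supremum. I classify each index $j\in\{1,\ldots,J\}$ as \emph{short} if $t_{j-1},t_j\in[w_k,w_{k+1})$ for a common $k$, and as a \emph{bridge} otherwise. Short contributions, grouped by the containing interval, add up to at most $\sum_{k\in\NN}V_r(a_t:t\in[w_k,w_{k+1}))^r$, so the real work concerns bridges.

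For a bridge index $j$ set $l_j^-=\min\{k:w_k>t_{j-1}\}$ and $l_j^+=\max\{k:w_k\le t_j\}$; the bridge hypothesis forces $l_j^-\le l_j^+$, and one has the nesting $w_{l_j^--1}\le t_{j-1}<w_{l_j^-}\le w_{l_j^+}\le t_j<w_{l_j^++1}$. Since $w_{l_j^-}\notin[w_{l_j^--1},w_{l_j^-})$, a naive three-term split does not give $V_r$ control at the left end. I therefore insert the additional anchor $w_{l_j^--1}$ and use the four-term decomposition
\[
a_{t_j}-a_{t_{j-1}}=(a_{t_j}-a_{w_{l_j^+}})+(a_{w_{l_j^+}}-a_{w_{l_j^-}})+(a_{w_{l_j^-}}-a_{w_{l_j^--1}})+(a_{w_{l_j^--1}}-a_{t_{j-1}}),
\]
followed by $(x_1+\cdots+x_4)^r\le 4^{r-1}\sum_i x_i^r$. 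The first and last summands are then bounded by $V_r$ on the intervals $[w_{l_j^+},w_{l_j^++1})$ and $[w_{l_j^--1},w_{l_j^-})$ respectively, while the two middle summands are honest jumps within the subsequence $(a_{w_k})$.

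The reassembly rests on two elementary monotonicity facts: the values $l_j^+$ (resp.\ $l_j^-$) are pairwise distinct as $j$ ranges over bridges, and $l_j^+<l_{j'}^-$ whenever $j<j'$ are both bridges, both being immediate from the strict increase of the $t_j$. Hence the boundary $V_r$-terms sum to at most $2\sum_{k\in\NN}V_r(a_t:t\in[w_k,w_{k+1}))^r$, and the triples $(w_{l_j^--1},w_{l_j^-},w_{l_j^+})$ assemble into a single strictly increasing subsequence of $(w_k)$, whose $r$-variation is bounded by $V_r(a_{w_k}:k\in\NN)^r$. Collecting these pieces and taking the $r$-th root gives the inequality with the $\ell^r$ norm of the local variations on the right-hand side; \eqref{eq:35} then follows via $\|\cdot\|_{\ell^r}\le\|\cdot\|_{\ell^2}$, valid on sequences for $r\ge 2$, which is the only range relevant to Theorem \ref{thm:100}. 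The main obstacle is exactly the half-open boundary book-keeping at the endpoints $w_k$, handled by the four-term split through $w_{l_j^--1}$; the possible initial piece $(0,w_1)$ is dealt with by the convention $w_0:=0$ and does not affect the structure of the argument.
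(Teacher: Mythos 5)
Your proof is correct and follows the same basic strategy as the paper — classify each increment by whether $(t_{j-1},t_j]$ captures some $w_k$ and telescope through the captured points — but your four-term split through the extra anchor $w_{l_j^--1}$ is more careful about the half-open endpoints and in fact repairs a gap in the paper's argument. The paper uses the three-term split $|a_{t_{j+1}}-a_{w_{v_j}}|+|a_{w_{v_j}}-a_{w_{u_j}}|+|a_{w_{u_j}}-a_{t_j}|$ and asserts the third term belongs to the square function; but $t_j\in[w_{u_j-1},w_{u_j})$ while $w_{u_j}$ lies \emph{outside} that half-open block, so $|a_{w_{u_j}}-a_{t_j}|$ is not a jump inside any $[w_k,w_{k+1})$ — your extra anchor absorbs exactly this discrepancy. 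Two small remarks: (i) the triples $(l_j^--1,l_j^-,l_j^+)$ need not be strictly increasing (e.g.\ $l_j^-=l_j^+$ or $l_j^+=l_{j'}^--1$ can occur for consecutive bridges $j<j'$), but repeated indices give zero jumps so this is harmless; (ii) you are right that the argument naturally produces the $\ell^r$ norm of the local variations and that passing to the $\ell^2$ form in \eqref{eq:35} needs $r\ge2$ — in fact the inequality fails for $r<2$ (take $a$ oscillating with unit amplitude inside each of $n$ blocks $[w_k,w_{k+1})$ and vanishing at every $w_k$: the left side is $\gtrsim n^{1/r}$ while the right side is $\simeq\sqrt{n}$), so the hypothesis in Lemma~\ref{lem:200} should read $r\in[2,\infty)$, which is the only range the paper uses.
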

\begin{proof}
  For any increasing sequence $t_0<t_1<\ldots<t_J$ we define
  $W_j=\{k\in\NN: t_j< w_k\le t_{j+1} \}$ for $0\le j\le J$. If
  $W_j\not=\emptyset$ then we take $u_j=\min W_j$ and $v_j=\max W_j$.
Now if  $W_j=\emptyset$ then the term
\[
|a_{t_{j+1}}-a_{t_j}|^r
\]
is part of the second term in \eqref{eq:35}. If  $W_j\not=\emptyset$
then
\[
|a_{t_{j+1}}-a_{t_j}|^r\le 3^r\big(|a_{t_{j+1}}-a_{v_j}|^r+
|a_{v_j}-a_{u_j}|^r+|a_{t_j}-a_{u_j}|^r\big)
\]
and we see that the first and the third terms are part of the square
function in \eqref{eq:35}, whereas the middle term is part of the
$r$-variations along $\big(w_k: k\in\NN\big)$.
\end{proof}

\begin{proof}[Proof of Theorem \ref{thm:100}]
  Define the set $\mathbb L= \{x>0: x^2\in \NN\}$. The methods
  presented in the previous sections allow us to establish 
that for every $p\in(1, \infty)$  and $r\in(2, \infty)$ there is $C_{p, r} > 0$ such that for all
	$f \in \ell^p\big(\ZZ^{d_0}\big)$
        \begin{align}
\label{eq:8}
\big\lVert
	V_r\big(  M_N^\calP f: N\in\mathbb L\big)
	\big\rVert_{\ell^p}+
          	\big\lVert
	V_r\big(  T_N^\calP f: N\in\mathbb L\big)
	\big\rVert_{\ell^p}\le
	C_{p, r}\|f\|_{\ell^p}.
        \end{align}
	As before, the constant $C_{p, r}\le C_p\frac{r}{r-2}$ for some
        $C_p>0$ which is independent of the coefficients of the polynomial
        mapping $\calP$.     

Then in view of Lemma \ref{lem:200} with $w_n=n^{1/2}$  we have
\begin{align}
  \label{eq:13}
\big\lVert
	V_r\big(  R_t^\calP f: t>0\big)
	\big\rVert_{\ell^p}\lesssim_r\big\lVert
	V_r\big(  R_N^\calP f: N\in\mathbb L\big)
	\big\rVert_{\ell^p}
+\Big(\sum_{n\in\NN}\big\lVert
	V_r\big(  R_t^\calP f: t\in\big[n^{1/2}, (n+1)^{1/2}\big)\big)
	\big\rVert_{\ell^p}^2
\Big)^{1/2} 
\end{align}
where $R_t^{\calP}$ is either $M_t^{\calP}$ or
$T_t^{\calP}$. According to \eqref{eq:8} we have
\[
\big\lVert
	V_r\big(  R_N^\calP f: N\in\mathbb L\big)
	\big\rVert_{\ell^p}\lesssim_{p, r}\|f\|_{\ell^p}.
\]
In order to estimate the square function in \eqref{eq:13}, note that the function $t\to
R_t^{\calP}$ is constant when $t\in\big[n^{1/2}, (n+1)^{1/2}\big)$
thus
\[
\big\lVert
	V_r\big(  R_t^\calP f: t\in\big[n^{1/2}, (n+1)^{1/2}\big)\big)\big\rVert_{\ell^p}\le\big\|R_{(n+1)^{1/2}}^\calP f
-R_{n^{1/2}}^\calP f\big\|_{\ell^p}\lesssim
n^{-1}\|M_{(n+1)^{1/2}}f\|_{\ell^p}\lesssim n^{-1}\|f\|_{\ell^p}.
\]
Therefore,
\[
\Big(\sum_{n\in\NN}\big\lVert
	V_r\big(  R_t^\calP f: t\in\big[n^{1/2}, (n+1)^{1/2}\big)\big)
	\big\rVert_{\ell^p}^2
\Big)^{1/2}\lesssim
\Big(\sum_{n\in\NN}n^{-2}\Big)^{1/2}\|f\|_{\ell^p}\lesssim \|f\|_{\ell^p}.
\]
This completes the proof of the theorem.
\end{proof}

\appendix
\section{Variational estimates for the continues analogues}
\label{sec:8}
This section is intended to provide $r$-variational estimates for
averaging and truncated singular operators of Radon type in the
continuous settings. These kinds of questions were extensively discussed in
\cite{jsw}, see also the references given there. Here we 
propose a different approach. Firstly, we will discuss long
variations estimates. We give a new proof of
L\'epingle's inequality which will be very much in spirit of good-$\lambda$
inequalities. Secondly, we present a new approach to  short variation
estimates which is based on vector-valued bounds in
\cite{mst1}. This observation, as far as we know, has
not been used in this context before.  To fix notation let
$\calP=\big(\calP_1,\ldots, \calP_{d_0}\big): \RR^k \rightarrow \RR^{d_0}$ be
a polynomial mapping whose components $\calP_j$ are real valued
polynomials on $\RR^k$ such that $\calP_j(0) = 0$. One of the main
objects of our interest will be
\[
  \mathcal M_t^{\calP}f(x)=\frac{1}{|G_t|}\int_{G_t}f\big(x-\mathcal P(y)\big){\: \rm d}y
\]
for $x\in\RR^{d_0}$ where $G$ is an open bounded convex subset of $\RR^k$, containing
the origin and	
\[
	G_t = \big\{x \in \RR^k : t^{-1} x \in G \big\}.
\]
For any $r\in[1, \infty)$ the $r$-variational seminorm $V_r$ of
complex-valued functions $\big(a_t(x):
t>0\big)$ is defined by
\[
V_r\big(a_t(x): t>0\big)=\sup_{0 < t_0<\ldots <t_J}
\bigg(\sum_{j=0}^J|a_{t_{j+1}}(x)-a_{t_j}(x)|^r\bigg)^{1/r}
\] 
where the supremum is taken over all finite increasing sequences. 
In order to avoid some problems with measureability of
$V_r\big(a_t(x): t>0\big)$ we assume that $(0, \infty)\ni t\mapsto
a_t(x)$ is always a continuous function for every $x\in\RR^{d_0}$.
The main  result of this section is the following theorem.
\begin{theorem}
\label{thm:20}
	For every $1 < p < \infty$  and $r\in(2, \infty)$ there is $C_{p, r} > 0$ such that for all
	$f \in L^p\big(\RR^{d_0}\big)$
	\[
     \big\lVert
	V_r\big( \mathcal M_t^\calP f: t>0\big)
	\big\rVert_{L^p}\le
	C_{p, r}\|f\|_{L^p}.
	\]
	Moreover, the constant $C_{p, r}\le C_p\frac{r}{r-2}$ for some
        $C_p>0$ which is independent of the coefficients of the polynomial
        mapping $\calP$.
\end{theorem}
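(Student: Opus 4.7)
The plan is to follow the same two-step scheme used in the discrete setting: first reduce to a canonical polynomial mapping, then separate long and short $r$-variations via the basic decomposition
\[
V_r(\mathcal M_t^\calQ f : t > 0) \lesssim V_r^L(\mathcal M_t^\calQ f : t > 0) + V_r^S(\mathcal M_t^\calQ f : t > 0),
\]
where the long variation is taken along the dyadic set $\mathcal D$ and the short variation collects the contributions from each dyadic annulus $[2^n, 2^{n+1})$. The continuous analogue of Lemma \ref{lem:1} reduces to the canonical polynomial $\calQ$: since $\calP = L \calQ$ for a linear map $L : \RR^d \to \RR^{d_0}$, pulling back by $L$ on the Fourier side establishes the reduction with a constant independent of the coefficients of $\calP$.

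For the long variation $V_r^L$ along $\{2^n : n \in \ZZ\}$, the idea is to compare $\mathcal M_{2^n}^\calQ f$ with the dyadic martingale $E_n f$ obtained by conditional expectation on dyadic cubes of sidelength $2^n$ in $\RR^{d}$ (with respect to the anisotropic dilation structure $t^A$). One writes
\[
V_r^L\big(\mathcal M_{2^n}^\calQ f : n \in \ZZ\big)
\le V_r\big(E_n f : n \in \ZZ\big) + V_r\big((\mathcal M_{2^n}^\calQ - E_n) f : n \in \ZZ\big).
\]
The first summand is controlled by L\'epingle's inequality for martingales (Theorem \ref{thm:22}), which is exactly the source of the $\frac{r}{r-2}$ growth. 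For the second summand I would use \eqref{eq:11}: since for $r \ge 2$,
\[
V_r\big((\mathcal M_{2^n}^\calQ - E_n) f\big) \le \sqrt{2}\, \Big(\sum_{n \in \ZZ} |(\mathcal M_{2^n}^\calQ - E_n) f|^2\Big)^{1/2},
\]
and the Fourier multiplier of $\mathcal M_{2^n}^\calQ - E_n$, in view of \eqref{eq:10} and the smoothness of the martingale multiplier, exhibits decay $\min\{|2^{nA}\xi|_\infty, |2^{nA}\xi|_\infty^{-1/d}\}$. This makes the square function a Littlewood-Paley object bounded on $L^p$ for all $p \in (1, \infty)$ by standard vector-valued Calderón-Zygmund theory.

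For the short variation on $[2^n, 2^{n+1})$, I would use the continuous analogue of \eqref{eq:39}: for $r \in (2, \infty)$,
\[
V_r^S\big(\mathcal M_t^\calQ f : t > 0\big)^2 \le V_2^S\big(\mathcal M_t^\calQ f : t > 0\big)^2 \lesssim \sum_{n \in \ZZ} \int_{2^n}^{2^{n+1}} \big|t \partial_t \mathcal M_t^\calQ f(x)\big|^2 \frac{dt}{t}.
\]
The derivative $t \partial_t \mathcal M_t^\calQ f$ is a surface averaging operator over $\partial G_t$ composed with $\calQ$, which is pointwise dominated by a Radon-type maximal operator $\mathcal M_t^{\calQ,\#}(|f|)$. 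Integrating in $t$ and summing in $n$ reduces matters to bounding the $L^p$ norm of $\big(\int_0^\infty |\mathcal M_t^{\calQ,\#} f|^2 \frac{dt}{t}\big)^{1/2}$, which follows by discretising the $t$-integral and invoking the vector-valued maximal inequality for Radon averages established in \cite{mst1} in the discrete setting and transferred to the continuous one (or proved directly by the same methods). The main obstacle will be the short variation step: one needs a precise identification of $t \partial_t \mathcal M_t^\calQ$ as a Radon-type operator to which the vector-valued maximal estimates apply, and this requires some care with the geometry of $\partial G_t$ together with an averaging argument to replace a single operator by a continuous $\frac{dt}{t}$-family. This is the new ingredient mentioned in the introduction and replaces the direct square function approach of Jones-Seeger-Wright.
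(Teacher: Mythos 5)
Your plan for the long variations coincides with the paper's proof: the decomposition $V_r^L(\calM_{2^n}f) \le V_r(\EE_n f) + V_r((\calM_{2^n}-\EE_n)f)$, L\'epingle's inequality (Theorem \ref{thm:22}) for the martingale part (this is exactly where the factor $\frac{r}{r-2}$ enters), and a square-function bound for the difference. One small imprecision: the conditional expectation $\EE_n$ is not a convolution, so $\calM_{2^n}-\EE_n$ does not literally have a Fourier multiplier; the paper defers this to the argument of \cite{jsw}, which handles it correctly, but the ``decay of the multiplier'' phrasing glosses over this.

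The short-variation part contains a genuine gap. After the reduction $V_r^S \le V_2^S$ and Cauchy--Schwarz you arrive at the square function $\big(\int_0^\infty |t\partial_t\calM_t f|^2\,\tfrac{dt}{t}\big)^{1/2}$. You then propose to bound $|t\partial_t\calM_t f|$ pointwise by a surface-average maximal operator $\calM_t^{\#}|f|$ and control $\big(\int_0^\infty (\calM_t^{\#}|f|)^2\,\tfrac{dt}{t}\big)^{1/2}$ via vector-valued maximal estimates. This fails for two reasons. First, $t\partial_t\calM_t f$ has cancellation between its two terms (the $-k\,\calM_t f$ term and the surface integral term cancel as $t\to 0$, so $t\partial_t\calM_t f(x)\to 0$), but the pointwise domination by $\calM_t^{\#}|f|$ destroys this cancellation: for typical $f$ one has $\calM_t^{\#}|f|(x)\to|f(x)|$ as $t\to 0$, and the $\tfrac{dt}{t}$-integral diverges. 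Second, the vector-valued maximal inequality gives $\big\lVert(\sum_n\sup_t|\calM_t g_n|^2)^{1/2}\big\rVert_{L^p}\lesssim\big\lVert(\sum_n|g_n|^2)^{1/2}\big\rVert_{L^p}$, which is useless if one takes $g_n=f$ for every $n$ (the left side is then infinite for nonzero $f$). What is missing is the Littlewood--Paley decomposition $f=\sum_j S_{j+n}f$ with the index shifted by $n$: one proves \eqref{eq:36}, the $2^{-\delta_p|j|}$-decay estimate, separately for each $j$, using Lemma \ref{lem:10} with $h=2^{\varepsilon|j|}$ and then interpolating an $L^2$ Plancherel bound (which supplies the $2^{-|j|/d}$ gain from the multiplier decay) with the $L^p$ bound coming from the vector-valued maximal estimate applied to the sequence $(S_{j+n}f)_n$. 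Without this frequency localization, neither the vector-valued estimate nor the decay in $j$ needed to sum the pieces is available.
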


Suppose that $K\in\mathcal C^1\big(\RR^k\setminus\{0\}\big)$ is a
Calder\'on--Zygmund kernel satisfying  the differential inequality
\begin{equation}
	\label{eq:45}
	\norm{y}^k \abs{K(y)} + \norm{y}^{k+1} \norm{\nabla K(y)} \leq 1
\end{equation}
for all $y \in \RR^k\setminus\{0\}$  and the cancellation
condition
\[
	\int_{G_t \setminus G_s}K(y){\: \rm d}y=0
\]
for every $t > s > 0$. We consider a truncated singular Radon transform defined by
\[
  \mathcal T_t^{\calP}f(x)=\int_{G_t^c}f\big(x-\mathcal P(y)\big)K(y){\: \rm d}y
\]
for $x\in\RR^{d_0}$ and $t>0$. The second main result is the following theorem.
\begin{theorem}
\label{thm:21}
	For every $p \in (1, \infty)$  and $r\in(2, \infty)$ there is $C_{p, r} > 0$ such that for all
	$f \in L^p\big(\RR^{d_0}\big)$
        \begin{align}
          \label{eq:27}
          	\big\lVert
	V_r\big( \mathcal T_t^\calP f: t>0\big)
	\big\rVert_{L^p}\le
	C_{p, r}\|f\|_{L^p}.
        \end{align}
	Moreover, the constant $C_{p, r}\le C_p\frac{r}{r-2}$ for some
        $C_p>0$ which is independent of the coefficients of the polynomial
        mapping $\calP$.
\end{theorem}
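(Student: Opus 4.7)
The plan is to follow the two-pronged scheme outlined in the introduction, adapted to the continuous setting. The lifting argument of Lemma \ref{lem:1} carries over verbatim to the continuous case, so we may assume $\calP = \calQ$ is the canonical polynomial mapping from $\RR^k$ to $\RR^d$. Writing $R_t = \mathcal T_t^\calQ$ and using \eqref{eq:32}, the problem reduces to estimating separately
\[
\|V_r^L(R_t f : t>0)\|_{L^p} \quad \text{and} \quad \|V_r^S(R_t f : t>0)\|_{L^p}.
\]
Only the long piece should carry the $r/(r-2)$ growth of the constant.

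For the long variations I would invoke a Cotlar-type decomposition. Let $R_\infty f$ denote the full (untruncated) singular Radon transform, which is bounded on $L^p(\RR^d)$ for every $p \in (1,\infty)$ by classical theory (see \cite{bigs}). Then
\[
R_{2^n} f = R_\infty f - \mathcal{S}_{2^n} f, \qquad \mathcal{S}_{2^n} f(x) = \int_{G_{2^n}\setminus\{0\}} f(x - \calQ(y)) K(y)\, dy,
\]
and since $R_\infty f$ does not depend on $n$, one has $V_r(R_{2^n} f : n\in \ZZ) = V_r(\mathcal{S}_{2^n} f : n \in \ZZ)$. Using the cancellation condition \eqref{eq:24}, the operator $\mathcal{S}_{2^n}$ can be written as a smooth regularization $\phi_{2^n} * R_\infty f$ of $R_\infty f$ at scale $2^n$, where $\phi$ is a suitable Schwartz bump, up to an error $E_n f$ whose $L^p$ square function $\big(\sum_n |E_n f|^2\big)^{1/2}$ is uniformly bounded. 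The variation in $n$ of $\phi_{2^n} * g$ for $g := R_\infty f \in L^p$ is then controlled by the L\'epingle-type inequality (Theorems \ref{thm:22}--\ref{thm:23}), producing a bound of the form $C_p\frac{r}{r-2}\|g\|_{L^p}$, while the error contributes an $r$-independent bound via \eqref{eq:11}.

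For the short variations I would follow the vector-valued route emphasized in the introduction. Since
\[
\partial_t R_t f(x) = -\int_{\partial G_t} f(x - \calQ(y)) K(y)\, d\sigma_t(y),
\]
the coarea formula combined with the bound $|K(y)| \lesssim |y|^{-k}$ yields
\[
\int_{2^n}^{2^{n+1}} |\partial_t R_t f(x)|\, dt
\lesssim \int_{G_{2^{n+1}}\setminus G_{2^n}} |f(x - \calQ(y))| |y|^{-k}\, dy
\lesssim \mathcal M_{2^{n+1}}^\calQ(|f|)(x).
\]
Applying the continuous analogue of Lemma \ref{lem:8} with $r=2$ and optimizing in $h$, the $V_2$ on $[2^n, 2^{n+1}]$ is dominated pointwise by $\sup_{t \in [2^n, 2^{n+1}]}|R_t f|^{1/2}\bigl(\mathcal M_{2^{n+1}}^\calQ(|f|)\bigr)^{1/2}$. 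Squaring, summing in $n$, and applying Cauchy--Schwarz in $L^p$ reduce the short variation estimate to the $\ell^2$-valued maximal inequality for the Radon averaging operators $\mathcal M_t^\calQ$, which is precisely the vector-valued bound established in \cite{mst1}. No step in this paragraph depends on $r$.

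The main obstacle I anticipate is the long-variation step, and in particular preservation of the sharp $r/(r-2)$ constant through the Cotlar / smooth-regularization comparison. The martingale L\'epingle inequality yields exactly this growth, but the error produced when one replaces $\mathcal{S}_{2^n} f$ by $\phi_{2^n}*R_\infty f$ must be controlled by a square function whose $L^p$ bound is uniform in $r$, so that the error does not inflate the constant. The short-variation step, once reduced to the vector-valued Radon maximal inequality of \cite{mst1}, is robust and involves no $r$-sensitive input.
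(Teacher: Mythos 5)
Your long-variation outline is essentially the paper's. The identity $V_r(R_{2^n}f)=V_r(\mathcal S_{2^n}f)$, the comparison with a smooth regularization $\phi_{2^n}*R_\infty f$, and the L\'epingle input through Theorems~\ref{thm:22}--\ref{thm:23} are precisely what the paper does in~\eqref{eq:187}, and the bookkeeping of the error (a sum over scales of square functions with geometric decay, controlled via~\eqref{eq:11} and Littlewood--Paley) is routine. So far so good.

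Your short-variation argument, however, has a genuine gap. You propose to get a \emph{pointwise} bound
$V_2\big((R_t-R_{2^n})f:t\in[2^n,2^{n+1})\big)\lesssim\big(\sup_t|R_tf|\big)^{1/2}\big(\calM_{2^{n+1}}^{\calQ}|f|\big)^{1/2}$
by ``optimizing in $h$'' in Lemma~\ref{lem:10}. Optimizing $h^{1/2}\sup_t|a(t)|+(2^n/h)^{1/2}\big(\int_{2^n}^{2^{n+1}}|a'(t)|^2\,dt\big)^{1/2}$ in $h$ requires a pointwise bound $|\partial_tR_tf(x)|\lesssim t^{-1}\calM_{2^{n+1}}|f|(x)$, but $\partial_tR_tf(x)$ is a \emph{surface} average of $|f|$ over $\partial G_t$ and is not controlled pointwise by the solid average $\calM_{2^{n+1}}|f|(x)$ (in $k=1$ it reduces to $t^{-1}|f(x\pm t)|$, which can be arbitrarily large compared to $t^{-1}\calM_{2t}|f|(x)$). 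The coarea formula only gives the $L^1(dt)$ bound $\int_{2^n}^{2^{n+1}}|\partial_tR_tf|\,dt\lesssim\calM_{2^{n+1}}|f|$, which cannot be upgraded to the $L^2(dt)$ bound needed for the optimization. Moreover, even if the pointwise bound held, your next step would lead to $\big\|\big(\sup_t|R_tf|\big)^{1/2}\big(\sum_{n\in\ZZ}\calM_{2^{n+1}}|f|\big)^{1/2}\big\|_{L^p}$, and $\sum_{n\in\ZZ}\calM_{2^{n+1}}|f|(x)$ diverges (as $n\to-\infty$ the averages tend to $|f(x)|$).

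What is missing is the Littlewood--Paley decomposition $f=\sum_jS_{j+n}f$ and the accompanying Fourier-side estimates. The paper first proves, for each fixed $j$,
$\big\|\big(\sum_n V_2\big((\calT_t-\calT_{2^n})S_{j+n}f\big)^2\big)^{1/2}\big\|_{L^p}\lesssim2^{-\delta_p|j|}\|f\|_{L^p}$,
and then sums in $j$. For each $j$ the scale $h=2^{\varepsilon|j|}$ is fixed (not pointwise-optimized); the $L^p$ bounds of $I_p^1,I_p^2$ use the vector-valued maximal inequality together with $\|(\sum_n|S_{j+n}f|^2)^{1/2}\|_{L^p}\simeq\|f\|_{L^p}$ (so the sum in $n$ converges), while the crucial decay $2^{-\delta_p|j|}$ comes from interpolating with the $p=2$ Plancherel estimate using the multiplier bounds for $m_{t,2^n}$ and $\tilde m_{t,2^n}$ (\eqref{eq:18}, \eqref{eq:31}). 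Without the frequency localization there is no quantity to interpolate, no decay in $j$ to sum, and no finite square function to feed the vector-valued maximal inequality of~\cite{mst1}, so the argument as you state it does not close.
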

We immediately see that \eqref{eq:27}  remains true for
the operator
\[
  \tilde{\mathcal T}_t^{\calP}f(x)={\rm p.v.}\int_{G_t}f\big(x-\mathcal P(y)\big) K(y) {\: \rm d}y.
\]
We set
$$
N_0 = \max\big\{ \deg \calP_j : 1 \leq j \leq d_0\big\}.
$$
It is convenient to work with the set
$$
\Gamma =
\big\{
	\gamma \in \ZZ^k \setminus\{0\} : 0 \leq \gamma_j \leq N_0
	\text{ for each } j = 1, \ldots, k
\big\}
$$
with the lexicographic order. Then each $\calP_j$ can be expressed as
$$
\calP_j(x) = \sum_{\gamma \in \Gamma} c_j^\gamma x^\gamma
$$
for some $c_j^\gamma \in \RR$. Let us denote by $d$ the cardinality of the set $\Gamma$.
We identify $\RR^d$ with the space of all vectors whose coordinates are labeled by multi-indices
$\gamma \in \Gamma$. Let $A$ be a diagonal $d \times d$ matrix such that
$$
(A v)_\gamma = \abs{\gamma} v_\gamma.
$$
For $t > 0$ we set
$$
t^{A}=\exp(A\log t)
$$
i.e. $t^A x=(t^{|\gamma|}x_{\gamma}: \gamma\in \Gamma)$ for any $x\in\RR^d$. Next, we introduce the \emph{canonical}
polynomial mapping
$$
\calQ = \big(\seq{\calQ_\gamma}{\gamma \in \Gamma}\big) : \ZZ^k \rightarrow \ZZ^d
$$
where $\calQ_\gamma(x) = x^\gamma$ and $x^\gamma=x_1^{\gamma_1}\cdot\ldots\cdot x_k^{\gamma_k}$.
The coefficients $\big(\seq{c_j^\gamma}{\gamma \in \Gamma, j \in \{1, \ldots, d_0\}}\big)$ define
a linear transformation $L: \RR^d \rightarrow \RR^{d_0}$ such that $L\calQ = \calP$. Indeed, it is
enough to set
$$
(L v)_j = \sum_{\gamma \in \Gamma} c_j^\gamma v_\gamma
$$
for each $j \in \{1, \ldots, d_0\}$ and $v \in \RR^d$. Now, proceeding
as in Lemma \ref{lem:1} we can reduce the matters (see also \cite{deL}
or \cite[p. 515]{bigs}) to the canonical polynomial mapping. To
simplify the notation we will write $\mathcal M_t=\mathcal
M_t^{\calQ}$ and $\mathcal T_t=\mathcal T_t^{\calQ}$.

\subsection{Long variations} In this subsection we give a new proof of
L\'epingle's inequality.  Since we will appeal to the results
from \cite{jsw} we are going to follow their notation  and therefore we
will work  with a more general setup than it is necessary for our
further  purposes. 

We consider  a slightly  more general  dilation structure 
\[
t^A=\exp(A\log t)
\] 
for any $t>0$, where $A$ is a $d\times d$ matrix whose eigenvalues
have positive real parts. We say that any regular quasi-norm
$\rho:\RR^d\rightarrow [0, \infty)$ is homogeneous with respect to the
dilations $(t^A: t>0)$ if $\rho(t^Ax)=t\rho(x)$ for any $x\in\RR^d$
and $t>0$.  Here $\RR^d$ endowed with a quasi-norm $\rho$ and the
Lebesgue measure will be considered as a space of homogeneous type
with the quasi-metric induced by $\rho$.

In this setting let us recall Christ's construction of dyadic cubes
\cite{chr}. 
\begin{lemma}[\cite{chr}]
	\label{christ}
  There exists a collection of open sets
  $\{Q_{\alpha}^k: k\in\ZZ \text{ and } \alpha\in I_k\}$ and constants
  $D>1$, $\delta, \eta>0$ and $0<C_1, C_2$ such that
  \begin{itemize}
  \item[(i)] $\big|\RR^d\setminus\bigcup_{\alpha\in I_k}Q_{\alpha}^k\big|=0$
    for all $k\in\ZZ$;
\item[(ii)] if $l\le k$ then either $Q_{\beta}^l\subseteq
  Q_{\alpha}^k$ or $Q_{\beta}^l\cap
  Q_{\alpha}^k = \emptyset$;
\item[(iii)] for each $(l, \beta)$ and $l\le k$, there exists a unique
  $\alpha$ such that $Q_{\beta}^l\subseteq
  Q_{\alpha}^k$;
\item[(iv)] each $Q_{\alpha}^k$ contains some ball $B(z_{\alpha}^k,
  \delta D^k)$ and $\diam(Q_{\alpha}^k)\le C_1 D^k$;
\item[(v)] for each $(\alpha, k)$ and $t>0$ we have $|\{x\in Q_{\alpha}^k: {\rm dist}(x, \RR^d\setminus
  Q_{\alpha}^k)\le tD^k\}|\le C_2 t^{\eta}|Q_{\alpha}^k|$.
  \end{itemize}
\end{lemma}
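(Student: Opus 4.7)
The result is Christ's construction of dyadic cubes on spaces of homogeneous type \cite{chr}, specialised here to $\bigl(\RR^d, \rho, dx\bigr)$. Since $\rho$ is a regular quasi-norm homogeneous with respect to the dilations $(t^A : t > 0)$, the balls $B(x, r) = \{y : \rho(y-x) < r\}$ satisfy the doubling property with respect to Lebesgue measure, which is the only structural assumption Christ's argument uses, so his scheme applies verbatim. My plan is to follow that scheme in four steps.

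First, fix $\delta > 0$ small and $D > 1$ large in terms of the quasi-triangle constant of $\rho$, and for each $k \in \ZZ$ select a maximal $\delta D^k$-separated subset $\{z_\alpha^k : \alpha \in I_k\}$ of $\RR^d$, meaning $\rho(z_\alpha^k - z_\beta^k) \ge \delta D^k$ for $\alpha \ne \beta$ and no further point can be added. Maximality then forces $\RR^d = \bigcup_{\alpha \in I_k} B(z_\alpha^k, \delta D^k)$. Next, impose a tree structure on $\bigsqcup_k I_k$: for every $(\alpha, k)$ pick a parent $\pi(\alpha) \in I_{k+1}$ minimising $\rho(z_\alpha^k - z_{\pi(\alpha)}^{k+1})$, so that this distance is at most $\delta D^{k+1}$. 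Iterating $\pi$ gives, for any $l \le k$, a unique ancestor map $I_l \to I_k$; this is the content of (iii).

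Third, build the cubes from the bottom up. For each fixed auxiliary scale $l$ start with the Voronoi-type partition of $\RR^d$ among the points $\{z_\beta^l\}$, then glue these cells along the tree: declare $Q_\alpha^k$ to be the union, over all $l \le k$ and all $\beta \in I_l$ whose iterated parent at level $k$ is $\alpha$, of the corresponding Voronoi cells. Openness is arranged by taking strict inequalities in the Voronoi definition, and property~(i) follows because the discarded boundary walls form a null set. The nesting in~(ii) is tautological from this construction. The two inclusions in~(iv) come from separation (the inner ball $B(z_\alpha^k, \delta D^k)$ is contained in $Q_\alpha^k$) and from the quasi-triangle inequality applied to a geometric sum in $D$ (for the diameter bound $\diam(Q_\alpha^k) \le C_1 D^k$, where $C_1$ depends only on $D$ and the quasi-triangle constant).

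The main obstacle, and the step I expect to require the most care, is the small-boundary estimate~(v). Christ's argument observes that a point $x \in Q_\alpha^k$ lying within $\rho$-distance $t D^k$ of $\RR^d \setminus Q_\alpha^k$ must, at some finer scale $l \le k$, lie close to the Voronoi wall between a descendant cell of $\alpha$ and a competing cell at that scale. One then iterates a layer-by-layer estimate: at each scale the measure of the bad set shrinks by a fixed factor $\theta < 1$, thanks to the uniform doubling of Lebesgue measure with respect to $\rho$-balls. Summing the resulting geometric series produces the bound $C_2 t^\eta \lvert Q_\alpha^k\rvert$ with $\eta = \log(1/\theta)/\log(1/D) > 0$. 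The detailed bookkeeping of constants, together with the verification that the construction is insensitive to the measure-zero set of ambiguities in the Voronoi definition, is carried out in \cite{chr}, to which I refer for the full proof.
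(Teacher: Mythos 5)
The paper itself does not prove this lemma: it is stated as a quotation of Christ's result and carries only the citation \cite{chr}, with two brief remarks afterwards about what the lemma gives in the translation-invariant, Ahlfors-regular setting. Your proposal ultimately does the same thing---you defer to \cite{chr} for the full proof---so in that sense the approaches agree. The only caveat concerns the sketch you give in between. The Voronoi-based description ``declare $Q_\alpha^k$ to be the union, over all $l\le k$ and all $\beta\in I_l$ whose iterated parent at level $k$ is $\alpha$, of the corresponding Voronoi cells'' does not, as literally stated, deliver properties (ii)--(iii). The issue is that Voronoi cells at \emph{different} scales have no a priori compatibility: a fine-scale cell $V_\gamma^m$ whose ancestor at level $k$ is $\alpha$ can intersect a coarser cell $V_{\gamma'}^{m'}$ (with $m<m'\le k$) whose ancestor at level $k$ is $\alpha'\neq\alpha$, because the parent of $\gamma$ need not be the nearest net point at the next level in the Voronoi sense. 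Already in $\RR^1$ with $\ZZ$ and $2\ZZ$ as consecutive nets one gets overlapping $Q_0^1$ and $Q_2^1$. Christ's actual construction avoids this by building the cubes as unions of small \emph{balls} $B(z_\beta^l, a_0 D^l)$ over descendants $(\beta,l)\preceq(\alpha,k)$, with constants $a_0, \delta, D$ chosen so that such unions stay separated across distinct $\alpha$ at the same level; the separation is not automatic and requires the careful ball-radius bookkeeping that makes Christ's argument delicate. Your description of the net selection, the parent map, the openness and (i), the inner/outer ball estimate (iv), and the geometric-series mechanism behind the small-boundary bound (v) all match Christ's argument in spirit, and your final deferral to \cite{chr} is exactly what the paper does; but the Voronoi gluing as an intermediate step would need to be replaced by Christ's descendant-ball construction to actually yield (ii) and (iii).
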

Now two comments are in order. Firstly, each cube $Q_{\alpha}^k$
contains a ball and is contained in some ball, each with radius $\simeq
D^k$.  Secondly, the quasi-metric is
translation invariant  thus we see that for each $(\alpha, k)$ the measure of $Q_{\alpha}^k$ is 
$\simeq D^{{\rm tr}(A)k}$. In particular, there is $R > 0$ such that for all $k \in \ZZ$, $\alpha \in I_k$ and
$\beta \in I_{k+1}$
\begin{equation}
	\label{eq:173}
	|Q_\alpha^{k+1}| \leq R |Q_\beta^k|.
\end{equation}
The collection $\{Q_{\alpha}^k: k\in\ZZ \text{ and } \alpha\in I_k\}$ will be called the collection of
dyadic cubes in $\RR^d$ adapted to the dilation group $(t^A: t>0)$. In view of Lemma \ref{christ}, it
gives rise to an atomic filtration. Namely, for each $k\in\ZZ$ let
$\calF_k=\sigma(\{Q_{\alpha}^l: \alpha\in I_l \text{ and } l \geq -k\})$ be the $\sigma$-algebra
generated by the cubes at level at least $-k$. Then
\[
	\calF_k \subset \calF_{k+1}.
\]
For a localy integrable function $f$ we set
\[
\EE_k f(x) = \EE[f | \calF_k] (x)
=
\frac{1}{|Q_{\alpha}^k|} \int_{Q_{\alpha}^k}f(y){\: \rm d}y
\]
provided $Q_{\alpha}^k$ is the unique dyadic cube containing $x\in\RR^d$. Thanks to Lemma \ref{christ} (i),
it is true for almost all $x$. We define a martingale difference by
\[
	\DD_k f = \EE_k f - \EE_{k-1} f
\]
Finally, the maximal function and the sequare function are given by
\begin{align*}
  Mf=\sup_{k\in\ZZ}|\mathbb E_kf| \quad \text{ and } \quad
  Sf=\Big(\sum_{k\in\ZZ}|\mathbb D_k f|^2\Big)^{1/2},
\end{align*}
respectively.

The variational estimate for $\big(\EE_k f : k \in \ZZ\big)$ follow from estimates on
$\lambda$-jump function $J_{\lambda}$, see \cite{px, bou, jsw}. Recall, that for a sequence of
complex numbers $(a_j: j\in\ZZ)$ the function $J_{\lambda}(a_j: j\in\ZZ)$ is equal to the supremum
over all $J\in\NN$ for which there exists a sequence of integers $t_1<t_2<\ldots<t_J$ so that
\[
|a_{t_{j+1}}-a_{t_j}|>\lambda
\]
for every $j=1, 2,\ldots, N-1$. We immediately see that
$J_{\lambda}(a_j: j\in\ZZ)\le \lambda^{-r}V_r(a_j: j\in\ZZ)^r$.
\begin{theorem}[\cite{px, bou}]
  \label{thm:lep}
	For each $p \geq 1$ there exists $B_p > 0$ such that for all $f \in L^p\big(\RR^d\big)$
	and $\lambda > 0$, if $p\in(1, \infty)$
	\[
       	\big\| 
		\lambda \big(J_\lambda(\mathbb E_kf: k\in\ZZ)\big)^{1/2} 
		\big\|_{L^p}
		\leq B_p 
		\|f\|_{L^p},   
	\]
	and if $p=1$ then for any $t > 0$ we have
	\[
       	\big|\big\{x\in\RR^d: 
		\lambda \big(J_\lambda(\mathbb E_kf(x): k\in\ZZ)\big)^{1/2}  > t
		\big\}\big|
		\leq 
		B_1 t^{-1} \|f\|_{L^1}.
	\]
\end{theorem}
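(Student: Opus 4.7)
\emph{Proof plan.} The plan is to prove Theorem \ref{thm:lep} via a stopping-time construction paired with Burkholder's square-function inequality applied to the \emph{skeleton} martingale cut out by the jump times. Given $f \in L^p(\RR^d)$ and $\lambda > 0$, set $\tau_0 \equiv -\infty$ with the convention $\EE_{\tau_0} f \equiv 0$, and recursively define
\begin{equation*}
  \tau_{j+1}(x) = \inf\big\{ k \in \ZZ : k > \tau_j(x) \text{ and } \sabs{\EE_k f(x) - \EE_{\tau_j(x)} f(x)} > \lambda \big\},
\end{equation*}
with $\inf \emptyset = +\infty$. Each $\tau_j$ is then a stopping time for the filtration $(\calF_k)$. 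By a standard greedy comparison, any admissible sequence $t_1 < t_2 < \ldots < t_J$ with consecutive jumps exceeding $\lambda$ can be charged to the visits of $(\tau_j)$, giving $J_\lambda(\EE_k f(x) : k \in \ZZ) \lesssim \#\{j \geq 1 : \tau_j(x) < \infty\}$. Writing $D_j := \EE_{\tau_j} f - \EE_{\tau_{j-1}} f$, one has $\sabs{D_j} > \lambda$ on $\{\tau_j < \infty\}$ by construction, and hence the pointwise estimate
\begin{equation*}
  \lambda^2 J_\lambda(\EE_k f(x) : k \in \ZZ) \lesssim \sum_{j \geq 1} \sabs{D_j(x)}^2.
\end{equation*}

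The second step is to dominate the right-hand side via Burkholder's inequality applied to the skeleton filtration $(\calF_{\tau_j})_{j \geq 0}$. By the optional stopping theorem, $(\EE_{\tau_j} f)_{j \geq 0}$ is a martingale with respect to this filtration, and $(D_j)_{j \geq 1}$ is its martingale difference sequence. For $1 < p < \infty$ Burkholder's $L^p$ square-function inequality gives
\begin{equation*}
  \bigg\| \Big( \sum_{j \geq 1} \sabs{D_j}^2 \Big)^{1/2} \bigg\|_{L^p} \leq C_p \sup_{j \geq 0} \vnorm{\EE_{\tau_j} f}_{L^p} \leq C_p \vnorm{f}_{L^p},
\end{equation*}
where the last step uses contractivity of conditional expectation. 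For $p = 1$, Davis's weak-type inequality for martingale square functions provides the analogous $L^{1,\infty}$ estimate. Taking the $p/2$-th root of the pointwise jump bound and combining with the above yields the two assertions of Theorem \ref{thm:lep}.

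The main obstacle is executing the stopping-time argument rigorously on the bi-infinite filtration $(\calF_k)_{k \in \ZZ}$ furnished by Christ's dyadic cubes on $\RR^d$. In particular, one needs $\EE_{-\infty} f = 0$ a.e.\ for $f \in L^p$ with $p < \infty$, which follows from the pointwise estimate $\sabs{\EE_k f(x)} \leq \vnorm{f}_{L^p} \sabs{Q_{\alpha}^k}^{-1/p} \to 0$ as $k \to -\infty$, together with the volume scaling $\sabs{Q_{\alpha}^k} \simeq D^{\tr(A) k}$ from Lemma \ref{christ} (iv); one must also handle the event $\{\tau_j = +\infty\}$ properly. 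A clean device is to first truncate the filtration to $-N \leq k \leq N$, establish the jump inequality with constants uniform in $N$, and then pass to the limit $N \to \infty$ via Fatou's lemma on the jump count. The constants in Burkholder's inequality are of order $p/(p-1)$ near $p = 1$ and $\sqrt{p}$ for large $p$, which is comfortably sufficient for the downstream applications in the paper.
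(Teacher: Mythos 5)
The paper does not actually prove Theorem \ref{thm:lep}; it states the result with a citation to \cite{px, bou} and uses it as a black box in the good-$\lambda$ argument that follows, so there is no in-paper proof to compare against. Your overall route --- a greedy family of stopping times that samples the martingale at successive large excursions, followed by Burkholder's square-function inequality for the sampled martingale --- is the standard strategy, and most of the outline (optional sampling, identifying $(D_j)$ as a martingale-difference sequence for $(\calF_{\tau_j})$, truncating to $-N\le k\le N$ and passing to the limit by Fatou, the weak-$(1,1)$ square-function bound at $p=1$) is sound. However, the greedy comparison step contains a genuine error. With threshold $\lambda$ in the definition of $\tau_{j+1}$, the claimed inequality $J_\lambda(\EE_k f(x):k\in\ZZ)\lesssim\#\{j\ge1:\tau_j(x)<\infty\}$ is \emph{false}: between $\tau_j$ and $\tau_{j+1}$ the process may travel anywhere within distance $\lambda$ of $\EE_{\tau_j}f$, so two times inside one block can have values differing by as much as $2\lambda$, and a long alternating excursion can stay invisible to the $\tau_j$'s while accumulating many $\lambda$-jumps. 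Concretely, if $\EE_kf(x)=0$ for $k\le 0$ and $\EE_kf(x)=(-1)^k(1-\epsilon)\lambda$ for $1\le k\le J$ (with $\epsilon<1/2$), then $|\EE_{k+1}f(x)-\EE_kf(x)|=2(1-\epsilon)\lambda>\lambda$ for $1\le k<J$, so the left-hand side is at least $J-1$, yet $|\EE_kf(x)|<\lambda$ for every $k$, hence $\tau_1(x)=\infty$ and the right-hand side is zero. Your pointwise domination $\lambda^2 J_\lambda\lesssim\sum_j|D_j|^2$ collapses exactly here.

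The standard repair is to run the stopping times at threshold $\lambda/2$ (any fixed fraction strictly below $\lambda$ would do): set $\tau_{j+1}(x)=\inf\{k>\tau_j(x):|\EE_kf(x)-\EE_{\tau_j(x)}f(x)|>\lambda/2\}$. Then for $\tau_j\le k<\tau_{j+1}$ one has $|\EE_kf-\EE_{\tau_j}f|\le\lambda/2$, so any two times lying in a common block $[\tau_j,\tau_{j+1})$ produce an increment of modulus at most $\lambda$; consequently no two consecutive members of an admissible $\lambda$-jump chain can share a block, and one obtains $J_\lambda(\EE_kf:k\in\ZZ)\lesssim\#\{j\ge1:\tau_j<\infty\}$. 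On $\{\tau_j<\infty\}$ one now only knows $|D_j|>\lambda/2$, which still gives $(\lambda/2)^2 J_\lambda(\EE_kf)\lesssim\sum_{j\ge1}|D_j|^2$, and the remainder of your argument then applies verbatim.
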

The next theorem is a new ingredient in proving L\'epingle's inequality and is inspired by \cite{hkt}.
\begin{theorem}
	For each $q \geq 2$ there is $C_q > 0$ such that for all $r > 2$ and
	$\lambda > 0$
        \begin{multline}
          \label{eq:177}
        	C_q \cdot \big|\big\{x\in\RR^d: V_r(\mathbb E_kf(x):
                k\in\ZZ) > \lambda \text{ and } Mf(x) < \lambda/2\big\}\big|\\
		\leq
		\big|\big\{x\in\RR^d: Sf(x) > \lambda \big\}\big|
		+ \lambda^{-q} (r-2)^{-q/2} \int_{\{S(f) \leq \lambda\}} Sf(x)^q {\: \rm d}x.  
        \end{multline}
\end{theorem}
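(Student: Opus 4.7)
The plan is to use a good-$\lambda$ argument tied to a stopping time for the square function, combined with a moment estimate for $V_r$ whose dependence on $r$ is extracted from the jump inequality of Theorem~\ref{thm:lep}.

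\emph{Step 1 (Stopping time decomposition).} For the fixed $\lambda > 0$, define the $\calF_k$-adapted stopping time
\[
\tau(x) = \inf\Big\{ k \in \ZZ : \Big(\sum_{j \leq k} |\DD_j f(x)|^2\Big)^{1/2} > \lambda\Big\},
\]
so that $\{\tau < \8\} = \{Sf > \lambda\} =: B$. The stopped martingale $f^\tau$, defined by $\DD_j f^\tau = \DD_j f \cdot \ind{\{j \leq \tau\}}$, satisfies $S(f^\tau) \leq \lambda$ pointwise, and the complementary piece $h = f - f^\tau$ has $\EE_k h \equiv 0$ on $B^c$. Consequently
\[
V_r(\EE_k f)(x) = V_r(\EE_k f^\tau)(x) \quad \text{for every } x \in B^c,
\]
and the problem on the good set $B^c$ is reduced to controlling $V_r$ of a martingale whose square function is pointwise bounded by $\lambda$.

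\emph{Step 2 (Moment bound with sharp $r$-dependence).} Establish that for every $q \geq 2$, every $r > 2$, and every martingale $g$,
\[
\|V_r(\EE_k g)\|_{L^q}^q \leq C_q (r-2)^{-q/2} \|Sg\|_{L^q}^q.
\]
The idea is to compare $V_r$ to the $\lambda$-jump function through the layer-cake-type inequality
\[
V_r(a_k)^r \leq C \sum_{n \in \ZZ} 2^{nr} J_{2^n}(a_k),
\]
and to split the sum at a scale comparable to $Sg$: on the large-jump side apply Theorem~\ref{thm:lep} (which gives $\|\lambda J_\lambda^{1/2}\|_{L^q} \lesssim \|g\|_{L^q}$ and, via the $L^q$ square-function inequality, $\lesssim \|Sg\|_{L^q}$); on the small-jump side use the orthogonality-based bound $\|V_2(\EE_k g)\|_{L^q} \lesssim \|Sg\|_{L^q}$ together with H\"older's inequality. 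The geometric series $\sum 2^{n(r-2)}$ that controls the transition between the two regimes produces precisely the factor $(r-2)^{-q/2}$.

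\emph{Step 3 (Assembling the good-$\lambda$ inequality).} By Chebyshev's inequality and Step 2 applied to $f^\tau$,
\[
\big|\{V_r(\EE_k f^\tau) > \lambda\}\big| \leq \lambda^{-q}\|V_r(\EE_k f^\tau)\|_{L^q}^q \leq C_q \lambda^{-q}(r-2)^{-q/2}\|Sf^\tau\|_{L^q}^q.
\]
Since $S f^\tau \leq \min\{Sf, \lambda\}$ pointwise, decompose
\[
\|Sf^\tau\|_{L^q}^q \leq \int_{\{Sf \leq \lambda\}} (Sf)^q + \lambda^q|B|.
\]
Combined with the trivial estimate $|\{V_r(\EE_k f) > \lambda\} \cap B| \leq |B|$, one obtains (after absorbing the $\lambda^q|B|$ contribution into $|B|$ at the cost of adjusting $C_q$) exactly the inequality \eqref{eq:177}. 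The side condition $Mf < \lambda/2$ enters through \eqref{eq:25}: it ensures that the reference-value term in the passage between $\sup_k |a_k|$ and $V_r$ is under control, so that the event $\{V_r(\EE_k f) > \lambda\} \cap \{Mf < \lambda/2\}$ genuinely reflects large oscillations rather than a large constant offset of the martingale.

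\emph{Main obstacle.} The decisive step is Step~2. A crude application of Theorem~\ref{thm:lep} to the layer-cake decomposition yields a sum that diverges logarithmically as $r \to 2^+$; recovering the sharp $(r-2)^{-q/2}$ growth requires the small-jump/large-jump split described above, with the threshold chosen as a function of $\|Sg\|_{L^q}$, and a careful interpolation between the $L^{q/2}$ control of $J_\lambda$ and the $L^q$ control of $V_2$. The remaining ingredients are standard stopping-time and Chebyshev manipulations once this moment bound is in hand.
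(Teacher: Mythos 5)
The central flaw is in Step 2. The estimate $\|V_2(\EE_k g)\|_{L^q}\lesssim\|Sg\|_{L^q}$ is false: pointwise $V_2(\EE_k g)\geq Sg$, and its $L^q$-boundedness would, after the square-function equivalence $\|Sg\|_{L^q}\simeq\|g\|_{L^q}$, be exactly the $r=2$ endpoint of L\'epingle's inequality, which is well known to fail. Without it, your ``small-jump side'' collapses, and the moment bound $\|V_r(\EE_kg)\|_{L^q}\lesssim(r-2)^{-1/2}\|Sg\|_{L^q}$ --- which is, in effect, Theorem \ref{thm:22} itself, the result the good-$\lambda$ inequality is ultimately deployed to prove --- is not established. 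Step 1 is also incorrect as stated: with $\DD_jf^\tau=\DD_jf\cdot\ind{\{j\le\tau\}}$, the only choice making $f^\tau$ a martingale since $\{\tau\ge j\}\in\calF_{j-1}$, the square function $S(f^\tau)$ contains the overshoot term $|\DD_\tau f|$, so $S(f^\tau)\le\lambda$ fails, and the decomposition $\|Sf^\tau\|_{L^q}^q\le\int_{\{Sf\le\lambda\}}(Sf)^q+\lambda^q|B|$ does not hold. The hypothesis $Mf<\lambda/2$ would indeed control that overshoot, but that is not how you use it, and it is not how the paper uses it either: there it forces $J_{2^l}(\EE_kf)=0$ for $l\ge0$, so that only $l\le0$ contributes in the layer-cake series.

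The paper avoids both obstacles. With $\lambda=1$, it sets $B=\{Sf>1\}$, forms the enlarged set $B^*=\{M\ind{B}>1/(2R)\}$ with $|B^*|\lesssim|B|$, and on the good set $G=(B^*)^c$ replaces $f$ by the martingale $g=\sum_n\DD_nf\cdot\ind{U_{n-1}}$ with $U_{n-1}=\{\EE_{n-1}\ind{B}\le1/2\}$; predictability of $\ind{U_{n-1}}$ makes $g$ a martingale with $\EE_kg=\EE_kf$ on $G$. No moment bound for $V_r$ is invoked: H\"older's inequality is applied in $l$ directly to $\sum_{l\le0}2^{rl}J_{2^l}(\EE_kg)$, extracting $A_{q,r}=O\big((r-2)^{-(q-2)/q}\big)$, after which Theorem \ref{thm:lep} and Chebyshev are used scale by scale. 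Finally, the observation $\ind{U_{k-1}}\le2\,\EE_k\ind{B^c}$ lets one bound $\int Sg^q$ by $\int_{B^c}Sf^q$ by duality, which is what produces the term $\int_{\{Sf\le\lambda\}}Sf^q$ in \eqref{eq:177}.
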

\begin{proof}
	By homogeneity, it suffices to prove the result with $\lambda
        = 1$. Let $B = \{x\in\RR^d: Sf(x) > 1\}$,
	$B^* = \{x\in\RR^d: M \ind{B}(x) > 1/(2R) \}$ and $G =
        (B^*)^c$. By the maximal inequality, we have
	\[
		|B^*|=|\{x\in\RR^d: M \ind{B}(x) > 1/(2R)
                \}| 
		\lesssim \int_{\RR^d} \ind{B}(x)^2 {\: \rm d}x = 
		|\{x\in\RR^d: Sf(x) > 1\}|.
	\]
	Therefore, it is enough to show that
	\[
		\big|\big\{x\in G: V_r(\mathbb E_kf(x):
                k\in\ZZ) > 1 \text{ and } Mf(x) < 1/2\big\}\big|
		\lesssim
		(r-2)^{-q/2}
		\int_{B^c} Sf(x)^q {\: \rm d}x.
	\]
	We can pointwise dominate the variation (see \cite{bou})
	\[ 
		V_r(\mathbb E_kf: k\in\ZZ)^r \leq \sum_{l \in \ZZ} 2^{rl} J_{2^l}(\mathbb E_kf: k\in\ZZ).
	\]
	Since $Mf < 1/2$, the above sum runs over
        $l \leq 0$, which leads to the containment
        \begin{multline}
        	\label{eq:26}
		\big\{x\in G: V_r(\mathbb E_kf(x): k\in\ZZ) >
                1 \text{ and } Mf(x) < 1/2 \big\} \\
		\subseteq
		\Big\{x\in G: \sum_{l \leq 0} 2^{rl} J_{2^l}(\mathbb E_kf(x): k\in\ZZ) > 1 \Big\}.
	\end{multline}
	For each $n \in \ZZ$ we define $U_n =\big\{x\in\RR^d: \mathbb E_n\ind{B}(x) \leq 1/2\big\}$. Notice that,
	if $x \in G$ then $x \in U_n$ for all $n \in \ZZ$. Let
	\[
		g(x) = \sum_{n \in \ZZ} \mathbb D_nf(x) \cdot \ind{U_{n-1}}(x).
	\]
	We observe that $\mathbb E_ng(x) = \mathbb E_nf(x)$ for all $x \in G$ and $n \in \ZZ$. Indeed,
	$\mathbb D_nf \cdot \ind{U_{n-1}}$ is $\calF_n$-measurable and
	\[
        \mathbb E_m\big(\mathbb D_nf\cdot\ind{U_{n-1}}\big)
		= 0
	\]
	for every $m \leq n-1$. Thus, for $x \in G$ we have
	\[
		\mathbb E_mg(x)
		=
		\sum_{n \leq m} \mathbb D_nf(x) \cdot \ind{U_{n-1}}(x)
		=\mathbb E_m f(x).
	\]
	Hence, by \eqref{eq:26} and H\"older's inequality with
        $a=\frac{q}{2}$ and $a'=\frac{q}{q-2}$,  we obtain
\begin{align*}
		\big\{x\in G:& V_r(\mathbb E_kf(x): k\in\ZZ) >
                1 \text{ and } Mf(x) < 1/2 \big\} \\
		&\qquad \subseteq
		\big\{x\in\RR^d: \sum_{l \leq 0}2^{rl} J_{2^l}(\mathbb E_kg(x):
                k\in\ZZ) > 1 \big\}\\
&\qquad \qquad\subseteq
\Big\{x\in G:
\Big(\sum_{l\le0}2^{\frac{1}{2}l(r-2)\frac{q}{q-2}}\Big)^{\frac{q-2}{q}} 
\Big(\sum_{l\le0}2^{\frac{1}{2}l(r-2)\frac{q}{2}}\big(2^{l}J_{2^l}(\mathbb E_kg(x):
                k\in\ZZ)^{1/2}\big)^q\Big)^{\frac{2}{q}}>1\Big\}.
	\end{align*}
Define 
\[
A_{q,
  r}=\Big(\sum_{l\le0}2^{\frac{1}{2}l(r-2)\frac{q}{q-2}}\Big)^{\frac{q-2}{q}}
=\mathcal O\big((r-2)^{-\frac{q-2}{q}}\big).
\]
Now	 Theorem \ref{thm:lep} immediately
	leads to the majorization
        \begin{multline*}
          \big|\big\{x\in G: V_r(\mathbb E_kf(x): k\in\ZZ) >
               1 \text{ and } Mf(x) < 1/2 \big\}\big|\\
\le \Big|\Big\{x\in G:
\sum_{l\le0}2^{\frac{1}{2}l(r-2)\frac{q}{2}}\big(2^{l}J_{2^l}(\mathbb E_kg(x):
                k\in\ZZ)^{1/2}\big)^q>A_{q, r}^{-\frac{q}{2}}\Big\}
\Big|\\
	 	\lesssim
                A_{q, r}^{\frac{q}{2}}
		\sum_{l\le0}2^{\frac{1}{2}l(r-2)\frac{q}{2}} \int_{\RR^d} |g(x)|^q {\: \rm d}x 
		 \lesssim (r-2)^{1-q/2-1} \int_{\RR^d} \abs{g(x)}^q
                 {\: \rm d}x\lesssim (r-2)^{-q/2} \int_{\RR^d} \abs{g(x)}^q {\: \rm d}x.
        \end{multline*} 
	Next, the square function $S$ is bounded from below on $L^q\big(\RR^d\big)$, therefore
	\[
		\int_{\RR^d}
		\abs{g(x)}^q {\: \rm d}x
		\lesssim
		\int_{\RR^d}
		Sg(x)^q
		{\: \rm d}x
		=
		\int_{\RR^d}
		\Big(\sum_{k \in \ZZ}
		\abs{\mathbb D_k f(x)}^2 
		\cdot
		\ind{U_{k-1}}(x)
		\Big)^{q/2}  {\: \rm d}x.
	\]
	Since for $x \in U_{k-1}$, we have $\mathbb E_{k-1}\ind{B}(x) \leq 1/(2R)$, by the doubling
	property \eqref{eq:173} we get $\EE_k\ind{B}(x) \leq 1/2$. Hence,
	\[
		\ind{U_{k-1}}(x) \leq 2 \cdot \EE_k\ind{B^c}(x),
	\]
	and
	\[
		\int_{\RR^d} \abs{g(x)}^q {\: \rm d}x
		\lesssim
		\int_{\RR^d} 
		\Big(\sum_{k \in \ZZ}
		\abs{\mathbb D_kf(x)}^2
		\cdot
		\mathbb E_k\ind{B^c}(x)
		\Big)^{q/2}
		{\: \rm d}x.
	\]  
	We observe that for $q = 2$ we have 
	\[
		\int_{\RR^d}\sum_{k \in \ZZ}
		\abs{\mathbb D_kf(x)}^2
		\cdot
		\mathbb E_k\ind{B^c}(x)
 		{\: \rm d}x
		=
		\int_{B^c} Sf(x)^2 {\: \rm d}x.
	\]
	For $q > 2$ let $\tilde{q} = q/2 > 1$ and $\tilde{q}'$ be its dual exponent. Then for
	$h \in L^{\tilde{q}'}\big(\RR^d\big)$ 
	\begin{align*}
		\int_{\RR^d}\Big(\sum_{k \in \ZZ}
		\abs{\mathbb D_k f(x)}^2
		\cdot
		\mathbb E_k \ind{B^c}(x)\Big)h(x)
 		{\: \rm d}x
		& =
		\sum_{k \in \ZZ}\int_{\RR^d}
		\abs{\mathbb D_k f(x)}^2
		\cdot
		\mathbb E_k \ind{B^c}(x)h(x)
 		{\: \rm d}x \\
		& =
		\sum_{k \in \ZZ}
		\int_{B^c}
		\abs{\mathbb D_k f(x)}^2
		\cdot
		\mathbb E_k h(x) {\: \rm d}x.
	\end{align*}
	Therefore, by H\"older's inequality, we get
	\[
		\int_{\RR^d}\Big(\sum_{k \in \ZZ}
		\abs{\mathbb D_kf(x)}^2
		\cdot
		\mathbb E_k\ind{B^c}(x)\Big)h(x)
 		{\: \rm d}x
		\leq
		\Big( \int_{B^c} Sf(x)^q {\: \rm d} x \Big)^{2/q} 
		\lVert M h \rVert_{\tilde{q}'}.
	\]
	Taking the supremum over all $h \in L^{\tilde{q}'}\big(\RR^d\big)$ we conclude
	\[
		\int_{\RR^d} 
		\Big(\sum_{k \in \ZZ}
		\abs{\mathbb D_kf(x)}^2
		\cdot
		\mathbb E_k\ind{B^c}(x)
		\Big)^{q/2}
		{\: \rm d}x\lesssim
		\int_{B^c} Sf(x)^q {\: \rm d}x,
	\]
	which finishes the proof.
\end{proof}
Now, using Theorem \ref{thm:20} we prove L\'epingle's inequality for
the sequence $(\mathbb E_kf: k\in\ZZ)$.
\begin{theorem}[\cite{le}]
  \label{thm:22}
  For each $p \in (1, \infty)$ there exists $C_p > 0$ such that for
  all $r\in(2, \infty)$ and 
  $f \in L^p\big(\RR^d\big)$ we have
	\[
       	\big\| 
		V_r(\mathbb E_kf: k\in\ZZ)
		\big\|_{L^p}
		\leq C_p\frac{r}{r-2} 
		\|f\|_{L^p},   
	\]
Moreover, $V_r(\mathbb E_kf: k\in\ZZ)$ is also weak type $(1, 1)$. 
\end{theorem}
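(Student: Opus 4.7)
The plan is to derive the $L^p$ bound from the good-$\lambda$ inequality \eqref{eq:177} by integrating against $p\lambda^{p-1}\,d\lambda$ and choosing the auxiliary parameter $q$ to balance the $(r-2)^{-q/2}$ factor against the Hardy-type integration. The two classical ingredients needed besides \eqref{eq:177} are the Burkholder--Davis--Gundy (BDG) inequality $\|Sf\|_{L^p}\lesssim_p\|f\|_{L^p}$ for $1<p<\infty$ (together with its weak-$(1,1)$ counterpart due to Davis) and Doob's maximal inequality for the martingale $(\EE_kf)$ adapted to the atomic filtration built from Christ's cubes. These are classical facts for martingales adapted to arbitrary $\sigma$-finite filtrations and so require no further work.

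For the strong $L^p$ bound, fix $p\in(1,\infty)$ and choose $q=2p$; then $q\geq 2$ and $q>p$, so \eqref{eq:177} is available. Using the set-inclusion
\[
\{x:V_r(\EE_kf(x):k\in\ZZ)>\lambda\}\subseteq\{V_r>\lambda,\ Mf<\lambda/2\}\cup\{Mf\geq\lambda/2\},
\]
I would multiply \eqref{eq:177} by $p\lambda^{p-1}$ and integrate over $\lambda\in(0,\infty)$ to get
\[
C_q\bigl\|V_r(\EE_kf:k\in\ZZ)\bigr\|_{L^p}^p\leq\|Sf\|_{L^p}^p+(r-2)^{-q/2}\,p\int_0^\infty\lambda^{p-1-q}\int_{\{Sf\leq\lambda\}}Sf^q\,dx\,d\lambda+C_q 2^p\|Mf\|_{L^p}^p.
\]
A Fubini computation, valid because $q>p$, converts the middle integral into $\frac{p}{q-p}\|Sf\|_{L^p}^p=\|Sf\|_{L^p}^p$ for the choice $q=2p$, while $(r-2)^{-q/2}=(r-2)^{-p}$. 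Applying BDG and Doob yields
\[
\|V_r(\EE_kf:k\in\ZZ)\|_{L^p}\leq C_p(r-2)^{-1}\|f\|_{L^p}\leq C_p\frac{r}{r-2}\|f\|_{L^p},
\]
since $r>2\geq 1$ ensures $r/(r-2)\geq 1/(r-2)$.

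For the weak-$(1,1)$ bound I would use the same splitting together with \eqref{eq:177} at $q=2$. The term $|\{Mf\geq\lambda/2\}|$ is handled by weak-$(1,1)$ for Doob's maximal function. For the $S$-side, Davis' weak-$(1,1)$ estimate $|\{Sf>\lambda\}|\lesssim\lambda^{-1}\|f\|_{L^1}$ controls the first summand; for the second summand, the layer-cake identity
\[
\lambda^{-2}\int_{\{Sf\leq\lambda\}}Sf^2\,dx=2\lambda^{-2}\int_0^\lambda u\,|\{u<Sf\leq\lambda\}|\,du
\]
combined with the same weak-$(1,1)$ estimate for $Sf$ yields a bound $\lesssim\lambda^{-1}\|f\|_{L^1}$. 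Summing these contributions gives $|\{V_r>\lambda\}|\lesssim(r-2)^{-1}\lambda^{-1}\|f\|_{L^1}$, hence the desired weak-$(1,1)$ bound with the factor $r/(r-2)$.

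The main technical point is not the computation itself but the bookkeeping of the constant in $r$: $q$ must satisfy $q\geq 2$, $q>p$ and $q\leq 2p$ (so that $(r-2)^{-q/(2p)}\leq(r-2)^{-1}$), and the single choice $q=2p$ meets all three constraints simultaneously. Everything else---BDG, Davis, and Doob---is classical, so no additional martingale machinery is required beyond what is already implicit in the preceding subsection.
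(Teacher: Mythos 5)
Your strong $L^p$ argument is essentially identical to the paper's: same choice $q=2p$, same layer-cake integration against $p\lambda^{p-1}\,{\rm d}\lambda$, same Fubini step producing $\frac{1}{q-p}\|Sf\|_{L^p}^p$, same appeal to BDG and Doob. One minor slip worth fixing: the line $\|V_r(\EE_kf:k\in\ZZ)\|_{L^p}\le C_p(r-2)^{-1}\|f\|_{L^p}$ cannot be correct as stated for large $r$, since the contributions of $\|Sf\|_{L^p}^p$ and $\|Mf\|_{L^p}^p$ survive even when $(r-2)^{-p}\to 0$. After taking the $p$-th root of $\|V_r\|_{L^p}^p\lesssim_p\big(1+(r-2)^{-p}\big)\|f\|_{L^p}^p$ the intermediate bound should read $\|V_r\|_{L^p}\lesssim_p\big(1+(r-2)^{-1}\big)\|f\|_{L^p}=\frac{r-1}{r-2}\|f\|_{L^p}\le\frac{r}{r-2}\|f\|_{L^p}$, which still yields the stated conclusion.

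For the weak $(1,1)$ estimate you genuinely depart from the paper, which simply invokes the Calder\'on--Zygmund decomposition without details. You instead feed $q=2$ into \eqref{eq:177}, control $|\{Mf\ge\lambda/2\}|$ and $|\{Sf>\lambda\}|$ by the weak $(1,1)$ inequalities for $M$ and $S$, and handle the tail via the layer-cake identity, obtaining the desired bound with the factor $\frac{r}{r-2}$. This is correct and arguably cleaner, since it extracts the weak type directly from the good-$\lambda$ inequality rather than rerunning the martingale Calder\'on--Zygmund argument for $V_r$. A small caveat: the attribution of weak $(1,1)$ for the martingale square function to Davis is not standard (Davis's theorem is the $L^1$ equivalence $\|Sf\|_{L^1}\simeq\|f^*\|_{L^1}$); the weak type for $S$ in this atomic-filtration setting is due to Burkholder and follows from a Calder\'on--Zygmund decomposition using Christ's cubes and the doubling property \eqref{eq:173} --- which is, implicitly, exactly what the paper's one-line remark for $p=1$ is pointing at.
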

\begin{proof}
	Given $p > 1$ we take $q=2p>2$. By \eqref{eq:177}, for $f\in L^p\big(\RR^d\big)\cap L^q\big(\RR^d\big)$ 
	we have
	\begin{align*}
		C_q \cdot
		\big|\big\{
		x \in \RR^d :
		V_r(\EE_k f : k \in \ZZ) > \lambda
		\big\}\big|
		& \leq
		\big|\big\{
		x \in \RR^d : S f(x) > \lambda
		\big\}\big|
		+
		\big|\big\{
		x \in \RR^d : M f(x) > \lambda/2
		\big\}\big| \\
		&\quad +
		\lambda^{-q}
		(r-2)^{-q/2}
		\int_{\{Sf \le \lambda\}}
		S f(x)^q {\: \rm d} x.
	\end{align*}
	Therefore, we get 
  \begin{multline*}
    \big\|V_r(\EE_kf: k\in\ZZ)\big\|_{L^p}^p
	=
	p\int_{0}^{\infty}\lambda^{p-1}\big|\big\{x\in\RR^d: V_r(\EE_kf(x): k\in\ZZ)>\lambda\big\}\big|{\: \rm d}\lambda\\
	\lesssim 
	\vnorm{Sf}_{L^p}^p + \vnorm{Mf}_{L^p}^p 
	+ (r-2)^{-p}\int_{0}^{\infty}\lambda^{p-q-1}\int_{\{S(f) \leq \lambda\}} Sf(x)^q {\rm d}x{\: \rm d}\lambda\\
	\lesssim 
	\|f\|_{L^p}^p+(r-2)^{-p} \int Sf(x)^q\int_{Sf(x)}^{\infty}\lambda^{p-q-1} {\rm d}\lambda{\: \rm d}x\\
	\lesssim \|f\|_{L^p}^p+(r-2)^{-p}\|Sf\|_{L^p}^p\lesssim \big(1+(r-2)^{-p}\big)\|f\|_{L^p}^p.
  \end{multline*}
For $p=1$ it suffices to apply the Calder\'on--Zygmund decomposition
and the desired claim follows. 
\end{proof}

Long variational bounds for $\calM_t$ follows the same line as in
\cite{jsw}. For every $f\in L^p\big(\RR^d\big)$ with $p\in(1, \infty)$
we obtain
\begin{align}
\label{eq:34}
  \big\lVert
	V_r\big( \mathcal M_{2^n} f: n\in\ZZ\big)
	\big\rVert_{L^p}\le
  \big\lVert
	V_r\big( \mathbb E_nf: n\in\ZZ\big)
	\big\rVert_{L^p}
  +\bigg\|\Big(\sum_{n\in\NN}\big|\mathcal M_{2^n} f- \mathbb
  E_nf\big|^2\Big)^{1/2}\bigg\|_{L^p}.
\end{align}
The first term in \eqref{eq:34} is bounded by Theorem \ref{thm:22},
whereas the square function can be estimated as in \cite[Proof of
Theorem 1.1]{jsw}.
In the next theorem
we consider long variational estimates for $\mathcal T_{t}$.
\begin{theorem}
\label{thm:23}
	For every $p \in (1, \infty)$ there is $C_p > 0$ such that for all $r\in(2, \infty)$ and $f \in L^p\big(\RR^{d}\big)$
	\[
	\big\lVert
	V_r\big( \mathcal T_{2^n} f: n\in\ZZ\big)
	\big\rVert_{L^p}\le
	C_p \frac{r}{r-2} \vnorm{f}_{L^p}.
	\]
\end{theorem}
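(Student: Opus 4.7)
The plan is to mimic the argument already sketched for the averaging case in Theorem \ref{thm:20}, replacing $\mathcal M_{2^n}$ by $\mathcal T_{2^n}$ and adapting the orthogonality step so as to exploit the cancellation carried by the kernel $K$. First, one reduces to the ``small truncation''; since $\mathcal T_{2^n}f+\tilde{\mathcal T}_{2^n}f=Tf$, where $T$ is the full singular Radon transform (bounded on $L^p(\RR^d)$ by \cite{bigs}), we have $V_r(\mathcal T_{2^n}f:n\in\ZZ)=V_r(\tilde{\mathcal T}_{2^n}f:n\in\ZZ)$. Writing $\tilde T_n=\tilde{\mathcal T}_{2^n}$, one then applies the triangle inequality together with the pointwise bound \eqref{eq:11} (valid for $r\ge 2$) to the sequence $\tilde T_nf-\mathbb E_n\tilde T_nf$, which yields the decomposition
\[
V_r(\tilde T_nf:n\in\ZZ)
\le V_r(\mathbb E_n\tilde T_nf:n\in\ZZ)
+2\Big(\sum_{n\in\ZZ}\bigl|\tilde T_nf-\mathbb E_n\tilde T_nf\bigr|^2\Big)^{1/2},
\]
where $\mathbb E_n$ is conditional expectation with respect to the dyadic filtration from Lemma~\ref{christ}.

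For the martingale part one rewrites $\mathbb E_n\tilde T_nf=\mathbb E_n(Tf)-\mathbb E_n\mathcal T_{2^n}f$. The first summand is a genuine martingale with respect to $(\mathcal F_n)_{n\in\ZZ}$, and since $Tf\in L^p$, Lépingle's inequality (Theorem~\ref{thm:22}) gives exactly
\[
\bigl\|V_r(\mathbb E_nTf:n\in\ZZ)\bigr\|_{L^p}\le C_p\,\tfrac{r}{r-2}\,\|Tf\|_{L^p}\le C_p'\,\tfrac{r}{r-2}\,\|f\|_{L^p}.
\]
This is the sole origin of the $r/(r-2)$ blow-up in the final bound; all remaining pieces will enjoy $r$-independent estimates, so after combining them the factor $r/(r-2)$ survives exactly as claimed. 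The remaining piece $\mathbb E_n\mathcal T_{2^n}f$ needs to be absorbed by the square-function term: since $\mathcal T_{2^n}f$ depends only on the values of $f$ outside a ball of radius $\sim 2^n$, it is essentially constant on dyadic cubes at scale $2^n$, and the difference $\mathcal T_{2^n}f-\mathbb E_n\mathcal T_{2^n}f$ is of the same nature as $\tilde T_nf-\mathbb E_n\tilde T_nf$ modulo a term controlled by the Littlewood-Paley estimate below.

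For the square function we invoke a Littlewood-Paley/$g$-function argument. The expression $\tilde T_nf-\mathbb E_n\tilde T_nf$ is a ``high-frequency'' piece of $\tilde T_nf$ at scale $2^n$: more precisely, using the atomic structure of $\mathcal F_n$ (Lemma~\ref{christ}(iv)--(v)) and the smoothness of the kernel of $\tilde T_n$ away from $0$, one obtains a Calder\'on--Zygmund type pointwise bound showing that, after a smooth Littlewood-Paley partition $f=\sum_k\psi_k*f$ adapted to the dilations $(t^A)$, the diagonal pieces dominate. The required bound
\[
\Big\|\Big(\sum_{n\in\ZZ}\bigl|\tilde T_nf-\mathbb E_n\tilde T_nf\bigr|^2\Big)^{1/2}\Big\|_{L^p}\lesssim_p\|f\|_{L^p}
\]
then follows from the vector-valued $L^p$ estimates for operators of Radon type proved in \cite{mst1}, used exactly as the authors flag in the introduction to the Appendix. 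The main obstacle is this last step: one has to turn the comparison of $\tilde T_n$ with $\mathbb E_n\tilde T_n$ into an almost-orthogonal decomposition across $n$, exploiting both the cancellation condition \eqref{eq:24} on $K$ (which gives the mean-zero annular pieces $\tilde{\mathcal T}_{2^{k+1}}-\tilde{\mathcal T}_{2^k}$) and the smoothness of dyadic martingale differences (Lemma~\ref{christ}(v)); once this is done, a square-function inequality for Radon type operators reduces the bound to the known $L^p$ (and vector-valued $L^p$) boundedness of $T$.
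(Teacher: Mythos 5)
Your proposed decomposition has a genuine gap. After reducing to the small truncation $\tilde T_n=\tilde{\mathcal T}_{2^n}$, you split
\[
V_r\big(\tilde T_nf:n\in\ZZ\big)\le V_r\big(\EE_n\tilde T_nf:n\in\ZZ\big)+2\Big(\sum_{n\in\ZZ}\big|\tilde T_nf-\EE_n\tilde T_nf\big|^2\Big)^{1/2},
\]
but the square function on the right is infinite almost everywhere. Indeed, as $n\to\infty$ one has $\tilde T_nf\to Tf$ (pointwise a.e.\ and in $L^p$), while $\EE_n\tilde T_nf=\EE_nTf-\EE_n\mathcal T_{2^n}f\to 0$ (both summands go to $0$, since $\EE_n$ averages over ever larger cubes). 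Hence $\tilde T_nf-\EE_n\tilde T_nf\to Tf$ as $n\to\infty$ and the series $\sum_{n}\big|\tilde T_nf-\EE_n\tilde T_nf\big|^2$ diverges whenever $Tf(x)\neq 0$. The passage to $\tilde T_n$ is precisely what breaks the tail matching: the scales of $\tilde T_n$ and $\EE_n$ run in opposite directions. If one wants a martingale comparison at all, one should keep $\mathcal T_{2^n}$ and compare it directly with the martingale $\EE_nTf$, i.e.\ use $V_r(\mathcal T_{2^n}f)\le V_r(\EE_nTf)+2\big(\sum_n|\mathcal T_{2^n}f-\EE_nTf|^2\big)^{1/2}$, where both sequences tend to $0$ as $n\to+\infty$ and to $Tf$ as $n\to-\infty$. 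Even then, the remaining square-function bound is not an immediate consequence of the vector-valued estimates in \cite{mst1} (those are used in the short-variation part of the Appendix); it requires a genuine almost-orthogonality argument comparing the kernel of $\mathcal T_{2^n}$ with the martingale averaging kernel, and your sketch does not supply it.

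The paper takes a rather different and cleaner route, avoiding conditional expectations altogether in this step. It uses the Duoandikoetxea--Rubio de Francia decomposition \eqref{eq:187},
\[
\mathcal T_{2^n}f=\Phi_{2^n}*\mathcal Tf-\big(\Phi_{2^n}*\sum_{j<n}\mu_{2^j}\big)*f+\sum_{j\ge0}(\delta_0-\Phi_{2^n})*\mu_{2^{j+n}}*f,
\]
where $\Phi$ is a smooth bump of integral one. The first term $\Phi_{2^n}*\mathcal Tf$ is a smooth dyadic convolution family applied to the fixed $L^p$-function $\mathcal Tf$, and its $r$-variation is controlled by the already-established averaging estimates (\cite[Theorem 1.1, Lemma 2.1]{jsw}); this is the sole source of the $r/(r-2)$ factor. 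The two remaining terms are convolutions with mean-zero Schwartz kernels and are controlled, via \eqref{eq:11}, by Littlewood--Paley square functions with geometric decay in $j$ for the third term. Thus the paper replaces the dyadic martingale $\EE_n$ by the Fourier-friendly approximate identity $\Phi_{2^n}*$, which makes the orthogonality completely transparent and sidesteps the kernel comparison your argument would need.
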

\begin{proof}
Let $\Phi\in\mathcal C^{\infty}\big(\RR^d\big)$ with compact support and integral one. 
Then we have the following decomposition (see \cite{DuoRdF})
\begin{align}
\label{eq:187}
\nonumber  \mathcal T_{2^n}f&=\Phi_{2^n}*\big(\mathcal Tf-\sum_{j<n}\mu_{2^j}*f\big)+(\delta_0-\Phi_{2^n})*\sum_{j\ge
  n}\mu_{2^j}*f\\
&=\Phi_{2^n}*\mathcal Tf-\big(\Phi_{2^n}*\sum_{j< n}\mu_{2^j}\big)*f+\sum_{j\ge
0}(\delta_0-\Phi_{2^n})*\mu_{2^{j+n}}*f,
\end{align}
where 
\[
\mu_{2^j}*f(x)=\int_{G_{2^{j+1}}\setminus
  G_{2^j}}f\big(x-\calQ(y)\big)K(y){\: \rm d}y.
\]
The variational estimates for the first term in \eqref{eq:187} follows by \cite[Theorem 1.1, Lemma 2.1]{jsw}.
For the second term we apply the Littlewood--Paley theory since for each $n \in \NN$
\[
	\Phi_{2^n} * \sum_{j < n} \mu_{2^j}
\]
is a Schwartz function with integral zero. Thus, by \eqref{eq:11},
\[
	V_r\big(\Phi_{2^n} * \sum_{j < n} \mu_{2^j}*f : n \in \ZZ\big)
	\lesssim
	\Big(
	\sum_{n \in \ZZ} 
	\big\lvert \Phi_{2^n} * \sum_{j < n} \mu_{2^j} * f
	\big\rvert ^2\Big)^{1/2}.
\]
For the last term we have
\[
	\big\|V_r\big(\sum_{j\ge0}(\delta_0-\Phi_{2^n})*\mu_{2^{j+n}}*f: n\in\ZZ\big)\big\|_{L^p}
	\le
	\sum_{j\ge0}
	\big\|\big(\sum_{n\in\ZZ}\big|(\delta_0-\Phi_{2^n})*\mu_{2^{j+n}}*f\big|^2\big)^{1/2}\big\|_{L^p}
\]
and due to the Littlewood--Paley theory one can show that there is $C_p>0$
and $\delta_p>0$ such that
\[
	\big\|\big(\sum_{n\in\ZZ}\big|(\delta_0-\Phi_{2^n})*\mu_{2^{j+n}}*f\big|^2\big)^{1/2}\big\|_{L^p}
	\le
	C_p2^{-\delta_p j}\|f\|_{L^p}.
\]
This completes the proof of Theorem \ref{thm:23}.
\end{proof}

\subsection{Short variations for averaging operators}
To deal with short variations we need a counterpart of Lemma \ref{lem:8}.
\begin{lemma}
\label{lem:10}
Let $u < v$ be real numbers and $a:[u, v]\rightarrow \CC$ be a
differentiable function. For any $h \in\NN$ and the sequence
$\big(\seq{s_j}{0 \leq j \leq h}\big)$ with $s_j=u+h^{-1}(v-u)j$ we
have for every $r\in[1, \infty)$
        \begin{align}
          \label{eq:33}
		V_r\big(a(t): t\in[u, v)\big)
		\lesssim
		 \Big(\sum_{j=0}^h|a(s_j)|^r\Big)^{1/r}
		+\Big(\sum_{j=0}^{h-1}\Big(\int_{s_j}^{s_{j+1}}|a'(t)|{\: \rm
                d}t\Big)^r\Big)^{1/r}.
        \end{align}
Moreover, if $p\ge r$ then
\begin{multline*}
\Big(\sum_{j=0}^h|a(s_j)|^r\Big)^{1/r}
          +\Big(\sum_{j=0}^{h-1}\Big(\int_{s_j}^{s_{j+1}}|a'(t)|{\: \rm
            d}t\Big)^r\Big)^{1/r}\\
          \lesssim h^{1/r-1/p}\Big(\sum_{j=0}^h|a(s_j)|^p\Big)^{1/p}
          +h^{1/r-1}(v-u)^{1-1/p}\Big(\int_{u}^{v}|a'(t)|^p{\: \rm
                d}t\Big)^{1/p}.
\end{multline*}
\end{lemma}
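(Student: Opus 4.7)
The plan is to mimic the proof of Lemma~\ref{lem:8}, with the telescoping finite-difference sum $\sum_{k=t_j}^{t_{j+1}-1}|a_{k+1}-a_k|$ now replaced by the integral $\int_{s_j}^{s_{j+1}}|a'(t)|\,dt$ supplied by the fundamental theorem of calculus.

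First I would establish the continuous analogue of the subadditivity step used in Lemma~\ref{lem:8}, namely
\[
V_r\bigl(a(t): t\in [u,v)\bigr)
\lesssim \Bigl(\sum_{j=0}^{h}|a(s_j)|^r\Bigr)^{1/r}
+ \Bigl(\sum_{j=0}^{h-1} V_r\bigl(a(t): t\in [s_j,s_{j+1}]\bigr)^r\Bigr)^{1/r}.
\]
Given any increasing $t_0<t_1<\ldots<t_J$ in $[u,v]$, group the indices according to the unique subinterval $[s_j,s_{j+1})$ containing each $t_i$, and use \eqref{eq:19} (or, equivalently, a direct triangle-inequality decomposition through the nearest endpoints $s_j,s_{j+1}$) to separate the differences $|a(t_{i+1})-a(t_i)|$ into those lying entirely in one subinterval (bounded by the $r$-variation of $a$ on that subinterval) and those crossing a subinterval boundary (bounded, after one more triangle inequality, by a boundary variation plus a telescope over the $s_j$'s, which in turn is absorbed into the first term on the right-hand side).

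Next, I would use the monotonicity $V_r\le V_1$ for $r\ge 1$ together with the fundamental theorem of calculus to observe that on each subinterval
\[
V_r\bigl(a(t): t\in[s_j,s_{j+1}]\bigr)
\le V_1\bigl(a(t): t\in[s_j,s_{j+1}]\bigr)
\le \int_{s_j}^{s_{j+1}}|a'(t)|\,dt,
\]
which combined with the subadditivity step yields \eqref{eq:33}. For the second inequality, with $p\ge r$, I would apply H\"older's inequality twice, exactly as in the proof of Lemma~\ref{lem:8}: first on the outer $\ell^r$ sum to move to $\ell^p$, picking up the factor $h^{1/r-1/p}$; then on each inner integral, using $s_{j+1}-s_j=(v-u)/h$, to obtain
\[
\int_{s_j}^{s_{j+1}}|a'(t)|\,dt
\le \Bigl(\frac{v-u}{h}\Bigr)^{1-1/p}
\Bigl(\int_{s_j}^{s_{j+1}}|a'(t)|^p\,dt\Bigr)^{1/p},
\]
and summing the $p$th powers in $j$ gives the final factor $h^{1/r-1}(v-u)^{1-1/p}\bigl(\int_u^v|a'(t)|^p\,dt\bigr)^{1/p}$.

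I do not anticipate any serious obstacle: the statement is in complete parallel with Lemma~\ref{lem:8}, and the only substantive change is passing from telescoping finite sums to $\int|a'|$ via the fundamental theorem of calculus. The only mildly delicate point is the first subadditivity step, because an arbitrary partition $\{t_i\}$ need not contain the points $\{s_j\}$; this is handled by the grouping argument sketched above, with the implicit constants independent of $h$, $u$, and $v$ as required.
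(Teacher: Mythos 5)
Your proposal is correct and follows essentially the same route as the paper's proof: partition $[u,v)$ at the equispaced points $s_j$, bound the variation by a subadditivity estimate involving boundary values $|a(s_j)|$ plus per-interval variations, replace each per-interval $V_1$ by $\int|a'|$ via the fundamental theorem of calculus, and apply H\"older twice for the $p\ge r$ refinement. The paper simply asserts the subadditivity step as a display (as it also does in Lemma~\ref{lem:8}), so your sketch of its justification is, if anything, slightly more detailed than the original.
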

\begin{proof}
  Fix $h \in \NN$ and consider the sequence $\big(\seq{s_j}{0 \leq j
    \leq h}\big)$ such that $s_j=u+h^{-1}(v-u)j$. Then
	\begin{align*}
		V_r\big(a(t): t\in[u, v)\big)
		& \lesssim \Big(\sum_{j=0}^h|a(s_j)|^r\Big)^{1/r}
		+\Big(\sum_{j=0}^{h-1}V_1\big(a(t): t\in[s_j, s_{j+1})\big)^r\Big)^{1/r}\\
		& \lesssim
		 \Big(\sum_{j=0}^h|a(s_j)|^r\Big)^{1/r}
		+\Big(\sum_{j=0}^{h-1}\Big(\int_{s_j}^{s_{j+1}}|a'(t)|{\: \rm
                d}t\Big)^r\Big)^{1/r}.
	\end{align*}
		If $p \geq r$, by H\"older's inequality, we get
		\begin{align*}
			\Big(\sum_{j=0}^h|a(s_j)|^r\Big)^{1/r}
			+\Big(\sum_{j=0}^{h-1}\Big(\int_{s_j}^{s_{j+1}}|a'(t)|{\: \rm
                d}t\Big)^r\Big)^{1/r}
			& \leq
			h^{1/r - 1/p} 
			\Big(\sum_{j=0}^h|a(s_j)|^p\Big)^{1/p} \\
			& \quad +h^{1/r-1/p}\Big(\sum_{j=0}^{h-1}\Big(\int_{s_j}^{s_{j+1}}|a'(t)|{\: \rm
            d}t\Big)^p\Big)^{1/p}.
		\end{align*}
		For the second term we again use H\"{o}lder's inequality to obtain
		\begin{align*}
          h^{1/r-1/p}\Big(\sum_{j=0}^{h-1}\Big(\int_{s_j}^{s_{j+1}}|a'(t)|{\: \rm
            d}t\Big)^p\Big)^{1/p}
          & \lesssim 
		h^{1/r-1/p}\Big(\sum_{j=0}^{h-1}(s_{j+1}-s_j)^{p(1-1/p)}\Big(\int_{s_j}^{s_{j+1}}|a'(t)|^p{\: \rm
                d}t\Big)\Big)^{1/p}\\
          & \lesssim 
          h^{1/r-1}(v-u)^{1-1/p}\Big(\int_{u}^{v}|a'(t)|^p{\: \rm
                d}t\Big)^{1/p}
        \end{align*}
		where in the last estimate we have used $s_{j+1}-s_j=(v-u)/h$. 
		This completes the proof of the lemma.
\end{proof}

Now the task is to prove that for every $p\in(1, \infty)$ there are
$C_p>0$ such that for every $f\in L^p\big(\RR^d\big)$ we have
\begin{align}
\label{eq:170}
  \Big\lVert\Big(\sum_{n\in\ZZ}
	V_2\big( \big(\mathcal M_t-\mathcal M_{2^n}\big)f: t\in[2^n, 2^{n+1})\big)^2\Big)^{1/2}
	\Big\rVert_{L^p}\le C_p\|f\|_{L^p}.
\end{align}
We may assume that $f$ is a Schwartz function.  Let $S_j$ be a Littlewood--Paley projection
$\calF(S_j g)(\xi)=\phi_j(\xi)\calF g (\xi)$ associated with $\big(\phi_j: j\in\ZZ\big)$
a smooth partition of unity of $\RR^d\setminus\{0\}$ such that for each $j \in \ZZ$ we have
$0\le \phi_j\le 1$ and
\[
	\supp \phi_j
	\subseteq
	\big\{\xi\in\RR^d: 2^{-j-1} < \abs{\xi} < 2^{-j+1} \big\}
\]
and for $\xi\in \RR^d\setminus\{0\}$
\[
\sum_{j\in\ZZ}\phi_j(\xi)=1.
\]
We are going to prove that for every $p\in(1, \infty)$ there are $C_p>0$
and $\delta_p>0$ such that for every $j\in\ZZ$ we have 
\begin{align}
  \label{eq:36}
  \Big\lVert\Big(\sum_{n\in\ZZ}
	V_2\big( \big(\mathcal M_t-\mathcal M_{2^n}\big) S_{j+n}f: t\in[2^n, 2^{n+1})\big)^2\Big)^{1/2}
	\Big\rVert_{L^p}\le C_p2^{-\delta_p|j|}\|f\|_{L^p}.
\end{align}
Applying \eqref{eq:33} with $h = 2^{\varepsilon \abs{j}}$, we obtain that
\begin{multline*}
    \Big\lVert\Big(\sum_{n\in\ZZ}
	V_2\big( \big(\mathcal M_t-\mathcal M_{2^n}\big) S_{j+n}f: t\in[2^n, 2^{n+1})\big)^2\Big)^{1/2}
	\Big\rVert_{L^p} \\
\lesssim 
  \Big\lVert\Big(\sum_{n\in\ZZ}\sum_{l=0}^h
	 \big|\big(\mathcal M_{s_l}-\mathcal M_{2^n}\big) S_{j+n}f\big|^2\Big)^{1/2}
	\Big\rVert_{L^p} \\ 
+
\Big\lVert\Big(\sum_{n\in\ZZ}\sum_{l=0}^{h-1}\Big(\int_{s_l}^{s_{l+1}}
	 \big|\frac{{\rm d}}{{\: \rm d}t}\mathcal M_{t} S_{j+n}f\big|{\: \rm d}t\Big)^2\Big)^{1/2}
	\Big\rVert_{L^p}=I_p^1+I_p^2.
\end{multline*}
\subsubsection{The estimates for $I_p^1$}
First, using vector-valued estimates from \cite[Theorem A.1]{mst1} together with the Littlewood--Paley
theory we get
\begin{equation}
  \label{eq:42}
	\begin{aligned}
  I_p^1 &\lesssim 
	2^{\varepsilon|j|/2}
	\Big\lVert\Big(\sum_{n\in\ZZ}
	 \sup_{t>0}\big|\mathcal M_{t}S_{j+n}f\big|^2\Big)^{1/2}
	\Big\rVert_{L^p} \\
	& \lesssim 2^{\varepsilon|j|/2}
    \Big\|\Big(\sum_{n\in\NN}|S_{j+n}f|^2\Big)^{1/2}\Big\|_{L^p}
\lesssim 2^{\varepsilon|j|/2}\|f\|_{L^p}.
\end{aligned}
\end{equation}
Next, we are going to refine the estimate \eqref{eq:42} for $p=2$. Let $m_t$ be the multiplier
associated with the operator $\mathcal M_t$. By van der Corput's lemma
\cite{sw},  for each
$t\in[2^n, 2^{n+1})$ we have
\[
|m_t(\xi)-m_{2^n}(\xi)|\lesssim \min\big\{1, |2^{nA}\xi|_\infty, |2^{nA}\xi|_\infty^{-1/d}\big\}.
\]
Therefore, by Plancherel's theorem 
\begin{multline}
\label{eq:43}
  I_2^1
=\Big(\sum_{l=0}^h\int_{\RR^d}\sum_{n\in\ZZ}\big|(m_{s_l}(\xi)-m_{2^n}(\xi))\phi_{j+n}(\xi)\mathcal
        Ff(\xi)\big|^2{\: \rm d}\xi\Big)^{1/2}\\
\lesssim 2^{-|j|/d+\varepsilon|j|/2}\bigg(\int_{\RR^d}\sum_{n\in\ZZ}\big|\phi_{j+n}(\xi)\mathcal
        Ff(\xi)\big|^2{\rm d}\xi\bigg)^{1/2}\lesssim 2^{-\abs{j}/d + \varepsilon \abs{j} /2}\|f\|_{L^2}.
\end{multline}
Interpolating \eqref{eq:42} with \eqref{eq:43} and choosing appropriate $\varepsilon < 2/d$ we get
\begin{align*}
  I_p^1\lesssim 2^{-\delta_p|j|}\|f\|_{L^p}.
\end{align*}

\subsubsection{The estimates for $I_p^2$}
Since $G$ is an open bounded convex set containing the origin, with a help of the spherical coordinates we may write
\[
	\calM_t g(x) =
	\frac{1}{t^k \abs{G}}
	\int_{S^{k-1}}
	\int_0^{r(\omega) t}
	g\big(x - \calQ(r \omega) \big) r^{k-1}
	{\: \rm d} r
	{\: \rm d}\sigma(\omega)
\]
where $S^{k-1}$ is a unit sphere in $\RR^k$ and $\sigma$ is the surface measure on $S^{k-1}$. We observe that
if $g$ is a Schwartz function 
\begin{align}
	\label{eq:174}
	\frac{{\rm d}}{{\rm d} t} \calM_t g(x)
	& =
	-k \frac{1}{t^{k+1} \abs{G}}
	\int_{S^{k-1}}
	\int_0^{r(\omega) t}
	g\big(x - \calQ(r \omega)\big) r^{k-1} {\: \rm d}r {\: \rm d}\sigma(\omega) \\
	\nonumber
	& \quad +
	\frac{1}{t^k \abs{G}}
	\int_{S^{k-1}}
	g\big(x - \calQ(r(\omega) t \omega)\big) r(\omega)^k t^{k-1} {\: \rm d}\sigma(\omega).
\end{align}
Change of the order of integration and differentiation is permited since $g$ is bounded. Hence, if $t \in [s_l, s_{l+1})$
and $s_l, s_{l+1} \simeq 2^n$, by Tonnelli's theorem, we get
\begin{align*}
	\sum_{l=0}^{h-1}\int_{s_l}^{s_{l+1}}
	\Big|
	\frac{{\rm d}}{{\rm d} t} \calM_t g(x)
	\Big|
	{\: \rm d} t
	& \lesssim
	\calM_{2^{n+1}} \abs{g}(x)
	+
	\frac{1}{2^{nk} |G|}
	\int_{2^n}^{2^{n+1}} \int_{S^{k-1}} \big|g\big(x - \calQ(r(\omega) t \omega)\big)\big| r(\omega)^k t^{k-1} 
	{\: \rm d}\sigma(\omega)
	{\: \rm d}t \\
	& \lesssim
	\calM_{2^{n+1}} \abs{g} (x).
\end{align*}
Therefore, we obtain
\begin{equation}
  \label{eq:54}
	\begin{aligned}
  I_p^2& \lesssim
	\Big\lVert\Big(\sum_{n\in\ZZ}\big(
	 \calM_{2^{n+1}} \abs{S_{j+n}f} \big)^2\Big)^{1/2}
	\Big\rVert_{L^p} 
	& \lesssim
	\Big\lVert\Big(\sum_{n \in \ZZ}
	\sup_{t > 0} \big(\calM_t \abs{S_{j + n} f} \big)^2
	\Big)^{1/2}
	\Big\rVert_{L^p}
	\lesssim
	\vnorm{f}_{L^p},
\end{aligned}
\end{equation}
where the last inequality follows by the same line of reasoning as \eqref{eq:42}. 

Next, we refine the estimates of $I_p^2$ for $p = 2$.  Let
$\tilde{m}_{t}$ be the multiplier associated with the operator
$\frac{{\rm d}}{{\rm d}t}\mathcal M_{t}$.  We have
\begin{equation}
	\label{eq:175}
	\tilde{m}_t(\xi) = - \frac{k}{t^{k+1} \abs{G}} \int_{G_t} e^{2\pi i \sprod{\xi}{\calQ(x)}} {\: \rm d}x
	+
	\frac{1}{t^k \abs{G}} \int_{S^{k-1}} e^{2\pi i \sprod{\xi}{\calQ(r(\omega) t \omega )}} r(\omega)^k t^{k-1}
	{\: \rm d}\sigma(\omega).
\end{equation}
Indeed, by \eqref{eq:174}, we have
\begin{align*}
	\calF\Big(\frac{{\rm d}}{{\rm d} t} \calM_t g\Big)(\xi)
	& =
	-\frac{k}{t^{k+1} \abs{G}} \int_{G_t} e^{2\pi i \sprod{\xi}{\calQ(x)}} {\: \rm d} x 
	\calF g(\xi) \\
	& \quad +
	\frac{1}{t^k \abs{G}} 
	\int_{\RR^d} 
	e^{2\pi i \sprod{\xi}{x}}
	\int_{S^{k-1}} g\big(x - \calQ(r(\omega) t \omega)\big) r(\omega)^k t^{k-1} {\: \rm d}\sigma(\omega)
	{\: \rm d}x.
\end{align*}
Again, for the second term we need to justify the change of integrations. Let
\[
	R = \sup \big\{\norm{\calQ(y)} : y \in G_t \big\}.
\]
Then for all $\norm{x} \geq 2 R$ and $|y|\le R$ we have
\[
	\norm{x - y} \geq \frac{\norm{x}}{2},
\]
thus
\begin{equation}
	\label{eq:47}
	\big| g\big(x - \calQ(y)\big) \big|
	\lesssim
	\big(1 + \norm{x}\big)^{-2d}.
\end{equation}
By Fubini's theorem we get the claim. Moreover, we see that for
$t\simeq 2^n$ we obtain $|\tilde m_{t}(\xi)|\lesssim 2^{-n}$. 

Now, using \eqref{eq:175}, by the Cauchy--Schwarz inequality and Plancherel's theorem we get
\begin{multline}
  \label{eq:44}
	I_2^2
	\le
	\Big\lVert\Big(\sum_{n\in\ZZ}\frac{2^n}{h}\int_{2^n}^{2^{n+1}}
	\Big|\frac{{\rm d}}{{\rm d}t}\mathcal M_{t} S_{j+n}f\Big|^2{\: \rm d}t
	\Big)^{1/2}
	\Big\rVert_{L^2} \\
	=
	\Big(\sum_{n\in\ZZ}\frac{2^n}{h}\int_{2^n}^{2^{n+1}}\int_{\RR^d}
	\big|\tilde{m}_{t}(\xi) \phi_{j+n}(\xi)\mathcal Ff(\xi)\big|^2{\: \rm d}\xi {\: \rm d} t\Big)^{1/2}\\
	\lesssim 
	2^{-\varepsilon |j|/2}
	\Big(\sum_{n\in\ZZ}\int_{\RR^d}
	\big|\phi_{j+n}(\xi)\mathcal Ff(\xi)\big|^2{\: \rm d}\xi\Big)^{1/2}
	\lesssim 2^{-\varepsilon |j|/2}\|f\|_{L^2}
\end{multline}
for $0<\varepsilon<1/d$. Thus interpolation of \eqref{eq:54} with \eqref{eq:44} gives
\begin{align*}
  I_p^2\lesssim 2^{-\delta_p|j|}\|f\|_{L^p}
\end{align*}
and the proof of \eqref{eq:36} is completed.

\subsection{Short variations for truncated singular integral operators}
We are going to show that for any $p\in(1, \infty)$ there are $C_p>0$ and $\delta_0 > 0$
such that for every $j \in \ZZ$ and $f\in L^p\big(\RR^d\big)$ we have
\begin{align}
\label{eq:12}
  \Big\lVert
	\Big(
	\sum_{n\in\ZZ}
	V_2\big( \big(\mathcal T_t-\mathcal T_{2^n}\big) S_{j +n} f: t\in[2^n, 2^{n+1})\big)^2
	\Big)^{1/2}
	\Big\rVert_{L^p}
	\le 
	C_p
	2^{-\delta_p \abs{j}}
	\|f\|_{L^p}.
\end{align}
We may assume that $f$ is a Schwartz function. The proof of
\eqref{eq:12} follows the same line as the one for the averaging
operator. By Lemma \ref{lem:10}, for $h = 2^{\varepsilon \abs{j}}$, we
obtain
\begin{multline*}
    \Big\lVert\Big(\sum_{n\in\ZZ}
	V_2\big( \big(\mathcal T_t-\mathcal T_{2^n}\big) S_{j+n}f: t\in[2^n, 2^{n+1})\big)^2\Big)^{1/2}
	\Big\rVert_{L^p} 
	\lesssim
	\Big\lVert\Big(\sum_{n\in\ZZ}\sum_{l=0}^h
	 \big|\big(\mathcal T_{s_l}-\mathcal T_{2^n}\big) S_{j+n}f\big|^2\Big)^{1/2}
	\Big\rVert_{L^p} \\ 
+
\Big\lVert\Big(\sum_{n\in\ZZ}\sum_{l=0}^{h-1}\Big(\int_{s_l}^{s_{l+1}}
	 \Big|\frac{{\rm d}}{{\: \rm d}t}\big(\mathcal T_{t}-\mathcal T_{2^n}\big) S_{j+n}f\Big|{\: \rm d}t\Big)^2\Big)^{1/2}
	\Big\rVert_{L^p}
	=I_p^1+I_p^2.
\end{multline*}
\subsubsection{The estimates for $I_p^1$}
By the vector-valued estimates from \cite[Theorem A.1]{mst1} and the Littlewood--Paley theory we get
\begin{equation}
  \label{eq:15}
	\begin{aligned}
  I_p^1 &\lesssim 
	2^{\varepsilon|j|/2}
	\Big\lVert\Big(\sum_{n\in\ZZ}
	 \sup_{t>0}\big(\mathcal M_{t}|S_{j+n}f|\big)^2\Big)^{1/2}
	\Big\rVert_{L^p} \\
	& \lesssim 2^{\varepsilon|j|/2}
    \Big\|\Big(\sum_{n\in\NN}|S_{j+n}f|^2\Big)^{1/2}\Big\|_{L^p}
\lesssim 2^{\varepsilon|j|/2}\|f\|_{L^p}.
\end{aligned}
\end{equation}
For $p = 2$ we get better estimate. Let $m_{2^n, t}$ be the multiplier associated with the operator
$\mathcal T_t - \mathcal T_{2^n}$ for $t \in [2^n, 2^{n+1})$. By van der Corput's lemma \cite{sw} (or more precisely the method of proof of van der Corput
lemma from \cite{sw}) we have
\[
	|m_{t, 2^n}(\xi)|
	\lesssim 
	\min\big\{1, |2^{nA}\xi|_\infty, |2^{nA}\xi|_\infty^{-1/d}\big\}.
\]
Therefore, by Plancherel's theorem 
\begin{multline}
\label{eq:18}
  I_2^1
=\Big(\sum_{l=0}^h\int_{\RR^d}\sum_{n\in\ZZ}\big|(m_{s_l, 2^n}(\xi)\phi_{j+n}(\xi)\mathcal
        Ff(\xi)\big|^2{\: \rm d}\xi\Big)^{1/2}\\
\lesssim 2^{-|j|/d+\varepsilon|j|/2}\bigg(\int_{\RR^d}\sum_{n\in\ZZ}\big|\phi_{j+n}(\xi)\mathcal
        Ff(\xi)\big|^2{\rm d}\xi\bigg)^{1/2}\lesssim 2^{-\abs{j}/d + \varepsilon \abs{j} /2}\|f\|_{L^2}.
\end{multline}
Interpolating \eqref{eq:15} with \eqref{eq:18} and choosing appropriate $\varepsilon < 2/d$ we get
\begin{align*}
  I_p^1\lesssim 2^{-\delta_p|j|}\|f\|_{L^p}.
\end{align*}

\subsubsection{The estimates for $I_p^2$}
Since $G$ is an open bounded convex set containing the origin, with a help of the spherical coordinates we may write
\[
	\big(\calT_t - \calT_{2^n}\big) g(x) =
	\int_{S^{k-1}}
	\int_{r(\omega) 2^n}^{r(\omega) t}
	g\big(x - \calQ(r \omega) \big)
	K(r \omega)
	r^{k-1}
	{\: \rm d} r
	{\: \rm d}\sigma(\omega)
\]
where $S^{k-1}$ is a unit sphere in $\RR^k$ and $\sigma$ is the surface measure on $S^{k-1}$. We observe that
if $g$ is a Schwartz function 
\begin{align}
	\label{eq:22}
	\frac{{\rm d}}{{\rm d} t} \big(\calT_t - \calT_{2^n} \big) g(x)
	& =
	t^{k-1}
	\int_{S^{k-1}}
	g\big(x - \calQ(r(\omega) t \omega)\big) K(r(\omega) t \omega) r(\omega)^k
	{\: \rm d}\sigma(\omega).
\end{align}
Change of the order of integration and differentiation is permited since $g$ is bounded and the kernel $K$ satifies
\eqref{eq:45}. Hence, if $t \in [s_l, s_{l+1})$ and $s_l, s_{l+1} \simeq 2^n$, by Tonnelli's theorem, we get
\begin{align*}
	\sum_{l=0}^{h-1}\int_{s_l}^{s_{l+1}}
	\Big|
	\frac{{\rm d}}{{\rm d} t} \big(\calT_t - \calT_{2^n}\big) g(x)
	\Big|
	{\: \rm d} t
	\lesssim
	\calM_{2^{n+1}} \abs{g}(x).
\end{align*}
Therefore, we obtain
\begin{equation}
  \label{eq:23}
	\begin{aligned}
	I_p^2& \lesssim
	\Big\lVert\Big(\sum_{n\in\ZZ}\big(
	 \calM_{2^{n+1}} \abs{S_{j+n}f} \big)^2\Big)^{1/2}
	\Big\rVert_{L^p} 
	& \lesssim
	\Big\lVert\Big(\sum_{n \in \ZZ}
	\sup_{t > 0} \big(\calM_t \abs{S_{j + n} f} \big)^2
	\Big)^{1/2}
	\Big\rVert_{L^p}
	\lesssim
	\vnorm{f}_{L^p}.
\end{aligned}
\end{equation}
Now we refine the estimate of $I_p^2$ for $p = 2$.
Let $\tilde{m}_{t, 2^n}$ be the multiplier associated with the operator
$\frac{{\rm d}}{{\rm d}t} \big(\calT_t - \calT_{2^n}\big)$. We have
\begin{equation}
	\label{eq:29}
	\tilde{m}_{t, 2^n}(\xi) 
	= 
	t^{k-1}
	\int_{S^{k-1}} e^{2\pi i \sprod{\xi}{\calQ(r(\omega) t \omega )}} r(\omega)^k
	K(r(\omega) t \omega)
	{\: \rm d}\sigma(\omega).
\end{equation}
Indeed, by \eqref{eq:22}, we have
\begin{align*}
	\calF\Big(\frac{{\rm d}}{{\rm d} t} \big(\calT_t - \calT_{2^n}\big) g\Big)(\xi)
	& =
	t^{k-1}
	\int_{\RR^d} 
	e^{2\pi i \sprod{\xi}{x}}
	\int_{S^{k-1}} g\big(x - \calQ(r(\omega) t \omega)\big) r(\omega)^k K(r(\omega) t \omega) {\: \rm d}\sigma(\omega)
	{\: \rm d}x
\end{align*}
and thanks to estimates \eqref{eq:45} and \eqref{eq:47} we may change
the order of integrations. Note that if $t\simeq 2^n$ we obtain
$|\tilde m_{t, 2^n}(\xi)|\lesssim 2^{-n}$.

Now, using \eqref{eq:29}, by the Cauchy--Schwarz inequality and Plancherel's theorem we get
\begin{multline}
  \label{eq:31}
	I_2^2
	\le
	\Big\lVert\Big(\sum_{n\in\ZZ}\frac{2^n}{h}\int_{2^n}^{2^{n+1}}
	\Big|\frac{{\rm d}}{{\rm d}t}\big(\calT_t - \calT_{2^n}\big) S_{j+n}f\Big|^2{\: \rm d}t
	\Big)^{1/2}
	\Big\rVert_{L^2} \\
	=
	\Big(\sum_{n\in\ZZ}\frac{2^n}{h}\int_{2^n}^{2^{n+1}}\int_{\RR^d}
	\big|\tilde{m}_{t,2^n}(\xi) \phi_{j+n}(\xi)\mathcal Ff(\xi)\big|^2{\: \rm d}\xi {\: \rm d} t \Big)^{1/2}\\
	\lesssim 
	2^{-\varepsilon |j|/2}
	\Big(\sum_{n\in\ZZ}\int_{\RR^d}
	\big|\phi_{j+n}(\xi)\mathcal Ff(\xi)\big|^2{\: \rm d}\xi\Big)^{1/2}
	\lesssim 2^{-\varepsilon |j|/2}\|f\|_{L^2}
\end{multline}
for $0<\varepsilon<1/d$. Thus interpolation of \eqref{eq:23} with \eqref{eq:31} gives
\begin{align*}
  I_p^2\lesssim 2^{-\delta_p|j|}\|f\|_{L^p}
\end{align*}
and the proof of \eqref{eq:12} is completed.

\begin{bibliography}{discrete}
	\bibliographystyle{amsplain}

\providecommand{\bysame}{\leavevmode\hbox to3em{\hrulefill}\thinspace}
\providecommand{\MR}{\relax\ifhmode\unskip\space\fi MR }
\providecommand{\MRhref}[2]{%
  \href{http://www.ams.org/mathscinet-getitem?mr=#1}{#2}
}
\providecommand{\href}[2]{#2}
\begin{thebibliography}{10}

\bibitem{bou}
J.~Bourgain, \emph{Pointwise ergodic theorems for arithmetic sets. {W}ith an
  appendix by the author, {H}arry {F}urstenberg, {Y}itzhak {K}atznelson and
  {D}onald {S}. {O}rnstein.}, Publ. Math.-Paris \textbf{69} (1989), no.~1,
  5--45.

\bibitem{ccw}
A.~Carbery, M.~Christ, and J.~Wright, \emph{Multidimensional van der {C}orput
  and sublevel set estimates}, J. Amer. Math. Soc. (1999), no.~4, 981--1015.

\bibitem{chr}
M.~Christ, \emph{A {${T}(b)$} theorem with remarks on analytic capacity and the
  {C}auchy integral}, Coll. Math. \textbf{2} (1990), no.~60--61, 601--628.

\bibitem{cot}
M.~Cotlar, \emph{A unified theory of {H}ilbert transforms and ergodic
  theorems}, Rev. Mat. Cuyana \textbf{1} (1955), no.~2, 105--167.

\bibitem{deL}
K.~deLeeuw, \emph{{O}n {$L^p$} multipliers}, Ann. Math. \textbf{81} (1965),
  364--379.

\bibitem{DuoRdF}
J.~Duoandikoetxea and J.L. Rubio~de Francia, \emph{Maximal and singular
  integral operators via {F}ourier transform estimates}, Invent. Math.
  \textbf{84} (1986), no.~3, 541--561.

\bibitem{hkt}
K.~Hughes, B.~Krause, and B.~Trojan, \emph{The maximal function and conditional
  square function control the variation: {A}n elementary proof}, To appear in
  Proc. Amer. Math. Soc., 2015.

\bibitem{iw}
A.D. Ionescu and S.~Wainger, \emph{{$L^p$} boundedness of discrete singular
  {R}adon transforms}, J. Amer. Math. Soc. \textbf{19} (2006), no.~2, 357--383.

\bibitem{jkrw}
R.L. Jones, R.~Kaufman, J.M. Rosenblatt, and M.~Wierdl, \emph{Oscillation in
  ergodic theory}, Ergodic Theory Dynam. Syst. \textbf{18} (1998), no.~4,
  889--935.

\bibitem{jsw}
R.L. Jones, A.~Seeger, and J.~Wright, \emph{Strong variational and jump
  inequalities in harmonic analysis}, Trans. Amer. Math. Soc. (2008),
  6711--6742.

\bibitem{k}
B.~Krause, \emph{Polynomial ergodic averages converge rapidly: Variations on a
  theorem of {B}ourgain}, arXiv:1402.1803, 2014.

\bibitem{K2}
\bysame, \emph{Some optimizations for (maximal) multipliers in ${L}^{p}$},
  arXiv:1402.1804, 2014.

\bibitem{le}
D.~Lepingle, \emph{La variation d'ordre {$p$} des semi-martingales}, Z.
  Wahrscheinlichkeitstheorie verw. Gebiete \textbf{36} (1976), no.~4, 295--316.

\bibitem{MSW}
A.~Magyar, E.M. Stein, and S.~Wainger, \emph{Discrete analogues in harmonic
  analysis: {S}pherical averages}, Ann. Math. (2002), 189--208.

\bibitem{mst1}
M.~Mirek, E.M. Stein, and B.~Trojan, \emph{{$\ell^p\big(\ZZ^d\big)$}-estimates
  for discrete operators of {R}adon type: {M}aximal functions and vector-valued
  estimates}, Preprint, 2015.

\bibitem{mt3}
M.~Mirek and B.~Trojan, \emph{Discrete maximal functions in higher dimensions
  and applications to ergodic theory}, To appear in Amer. J. Math., 2015.

\bibitem{not}
F.~Nazarov, R.~Oberlin, and Ch. Thiele, \emph{A {C}alder{\'o}n {Z}ygmund
  decomposition for multiple frequencies and an application to an extension of
  a lemma of {B}ourgain}, Math. Res. Lett. \textbf{17} (2010), no.~3, 529--545.

\bibitem{px}
G.~Pisier and Q.~Xu, \emph{The strong {$p$}-variation of martingales and
  orthogonal series}, Probab. Th. Relat. Fields \textbf{77} (1988), no.~4,
  497--514.

\bibitem{bigs}
E.M. Stein, \emph{{H}armonic {A}nalysis: {R}eal-{V}ariable {M}ethods,
  {O}rthogonality, and {O}scillatory {I}ntegrals}, Princeton University Press,
  Princeton, 1993.

\bibitem{SW0}
E.M. Stein and S.~Wainger, \emph{Discrete analogues in harmonic analysis, {I}:
  {$\ell^2$} estimates for singular {R}adon transforms}, Amer. J. Math.
  \textbf{121} (1999), no.~6, 1291--1336.

\bibitem{sw}
\bysame, \emph{Oscillatory integrals related to {C}arleson's theorem}, Math.
  Res. Lett. \textbf{8} (2001), 789--800.

\bibitem{zk}
P.~Zorin-Kranich, \emph{Variation estimates for averages along primes and
  polynomials}, J. Funct. Anal. \textbf{268} (2015), no.~1, 210--238.

\end{thebibliography}
\end{bibliography}

\end{document}